\documentclass[11pt]{article}
\usepackage{amsmath}
\usepackage{amssymb}
\usepackage{graphicx}
\usepackage{color}
\usepackage{amsthm}
\usepackage{amsfonts}
\usepackage{amscd}
\usepackage{mathrsfs}
\usepackage{bm}
\usepackage{enumerate}
\usepackage{algorithmic, algorithm}

\setlength{\topmargin}{0in} \textwidth=6in \textheight=8.5in
\setlength{\oddsidemargin}{0in} \setlength{\evensidemargin}{0in}
\setlength{\arraycolsep}{1mm}

\newcommand{\M}{\mathcal{M}}
\newcommand{\bx}{\mathbf{x}}
\newcommand{\bn}{\mathbf{n}}
\newcommand{\by}{\mathbf{y}}
\newcommand{\bw}{\mathbf{w}}
\newcommand{\bfp}{\mathbf{p}}

\newcommand{\bs}{\mathbf{s}}
\newcommand{\p}{\partial}

\newcommand{\e}{\epsilon}

\newcommand{\hr}{R\left(\frac{|\bx-\by|^2}{4t}\right)}
\newcommand{\hbr}{\bar{R}\left(\frac{|\bx-\by|^2}{4t}\right)}

\newcommand{\myref}[1]{(\ref{#1})}

\newcommand{\mathd}{\mathrm{d}}
\newcommand{\nd}[1]{\frac{\p #1}{\p \bn}}

\newcommand{\bV}{\mathbf{V}}
\newcommand{\bA}{\mathbf{A}}
\newcommand{\bu}{\mathbf{u}}

\newcommand{\hk}{R_t(\bx, \by)}
\newcommand{\rhk}{\bar{R}_t(\bx, \by)}
\newcommand{\hkpipj}{R_t({\bf p}_i, {\bf p}_j)}

\newcommand{\rhkpipj}{\bar{R}_t({\bf p}_i, {\bf p}_j)}
\newcommand{\rhkpisj}{\bar{R}_t({\bf p}_i, {\bf s}_j)}

\newcommand{\bfs}{{\bf s}}

\newcommand{\invt}{\frac{1}{t}}

\newtheorem{theorem}{\textbf{Theorem}}[section]
\newtheorem{lemma}{\textbf{Lemma}}[section]
\newtheorem{remark}{\textbf{Remark}}[section]

\newcommand{\R}{\mathbb{R}}
\numberwithin{equation}{section}

\makeatother

\begin{document}

\title{Point Integral Method for Solving Poisson-type Equations on Manifolds 
from Point Clouds with Convergence Guarantees}


\author{
Zhen Li%
\thanks{Mathematical Sciences Center, Tsinghua University, Beijing, China,
100084. \textit{Email: zli12@mails.tsinghua.edu.cn.}%
} \and Zuoqiang Shi%
\thanks{Mathematical Sciences Center, Tsinghua University, Beijing, China,
100084. \textit{Email: zqshi@math.tsinghua.edu.cn.}%
} \and Jian Sun %
\thanks{Mathematical Sciences Center, Tsinghua University, Beijing, China,
100084. \textit{Email: jsun@math.tsinghua.edu.cn.}%
}}

 \maketitle

\begin{abstract}
Partial differential equations (PDE) on manifolds arise in many areas, 
including mathematics and many applied fields. Among all kinds of PDEs, 
the Poisson-type equations including the standard Poisson equation and 
the related eigenproblem of the Laplace-Beltrami operator are 
of the most important. Due to the complicated geometrical structure of the manifold, it is 
difficult to get efficient numerical method to solve PDE on manifold. In the paper, 
we propose a method called point integral method (PIM) to solve the Poisson-type
equations from point clouds with convergence guarantees. In PIM, the key idea
is to derive the integral equations which approximates the Poisson-type equations 
and contains no derivatives but only the values of the unknown function. 
The latter makes the integral equation easy to be approximated from point cloud. 
In the paper, we explain the derivation of the integral equations, describe the point 
integral method and its implementation, and present the numerical experiments 
to demonstrate the convergence of PIM. 
\end{abstract}
\newpage
\section{Introduction}
Partial differential equations (PDE) on manifolds arise in many areas, including geometric flows
along manifolds in geometric analysis~\cite{caoyau}, movements of particles confined to surfaces 
in quantum mechanics~\cite{Schuster2003132, daCosta}, and distributions of physical or chemical quantities along 
interfaces in fluid mechanics~\cite{Defay}, among others.  It is well-known that one can extract
the geometric information of the manifolds by studying the behavior of 
partial differential equations or differential operators on the manifolds.  
This observation has been exploited both in mathematics, especially geometric analysis~\cite{yau}, and 
in applied fields, including machine learning~\cite{BelkinN05, Lafon04diffusion}, data analysis~\cite{Saito200868}, 
computer vision and image processing~\cite{Lindeberg}, geometric processing of 3D shapes~\cite{reuter06dna, Levy, OvsjanikovSG08}.  
Among all kinds of PDEs, the Poisson equation on manifolds and the related eigenproblem of the 
Laplace-Beltrami operator are of the most important, and have found applications in many fields. 
For instance, the eigensystem of the Laplace-Beltrami operator has been used for representing data in 
machine learning for dimensionality reduction~\cite{belkin2003led}, and for representing shapes in computer vision
and computer graphics for the analysis of images and 3D models~\cite{reuter06dna, OvsjanikovSG08}. 

Different approaches are available for solving the aforementioned partial differential equations, 
which we call the Poisson-type equations on manifolds, including Equation~\eqref{eq-neumann},
\eqref{eq-dirichlet}, \eqref{eigen-neumann}, and \eqref{eigen-dirichlet}, 
which will appear in Section~\ref{sec:problem}. 
If a manifold is represented by a mesh with nice elements, the finite element method (FEM) 
is effective for solving the Poisson-type equations on it.  
It is well-known that bad shaped elements may increase the condition number of the linear 
systems in FEM and hence reduce the accuracy of the solution~\cite{Shewchuk02whatis}. 
However, for a curved manifold, it is already very difficult to obtain a globally
consistent mesh~\cite{BoissonnatDG12}, let alone to generate a mesh with well-shaped 
elements~\cite{Dey:2006:CSR, Zhao:2001:FSR}.
To overcome the difficulty of triangulating manifolds, implicit representation (aka, level set
representation) can be used, where the differential equation is extended into the ambient 
space and discretized using a Euclidean grid~\cite{Bertalmio, Xu}. However, in the applications where
the ambient dimension is high, it is very expensive to lay down a Euclidean grid and solve
the equations on it. 

In this paper, we propose a method to solve the above Poisson-type equations on manifolds 
from point clouds with convergence guarantees. 
Unlike a mesh or a Euclidean grid, which may be difficult to generate or may introduce extra 
complexity, point cloud is the simplest way of representing a manifold, which is often made 
ready for use in practice and whose complexity depends only on the manifold itself. 
The main observation is that the Poisson equations can be 
approximated by certain integral equations which can be easily discretized and has a faithful approximation from 
point clouds. More precisely, we consider the Poisson equation with Neumann boundary condition:
\begin{eqnarray}
\label{eq-neumann-intro}
  \left\{\begin{array}{rl}
      -\Delta u(\bx)=f(\bx),&\bx\in \M,\\
      \frac{\p u}{\p \bn}(\bx)=g(\bx),& \bx\in \p \M,
\end{array}\right.
\end{eqnarray}
where $\M$ is a $k$ dimensional submanifold isometrically embedded in $\mathbb{R}^d$.
We show that its solution is well approximated by the solution of the following integral equation:
\begin{eqnarray}
\label{eq-integral-neumann-intro}
-\frac{1}{t}\int_\M (u(\bx)-u(\by))\hr \mathd \bx &=& \nonumber\\
\int_{\M} f(\bx) \hbr \mathd \bx 
&+& 2\int_{\p \M} g(\bx)\hbr \mathd \bx.\quad 
\end{eqnarray}
where the function $R(r):\mathbb{R}^+\rightarrow \mathbb{R}^+$ is either compactly supported or decays exponentially and
\begin{equation}
  \bar{R}(r)=\int_r^{+\infty}R(s)\mathd s.
\end{equation} 
One choice of the function $R$ is the well-known Gaussian. 
As the integral equation involves no derivatives of the unknown function $u$ but only the function values,  
it can be easily discretized from a point cloud which samples the underlying manifold.  
We call this method point integral method (PIM) as it only requires the approximation of integrals from 
the discrete representations. It has been shown that PIM has convergence guarantees for solving  
the Poisson-type equations on manifolds. The readers who are interested in the convergence analysis
are referred to our companion papers~\cite{SS14}. In this paper, we focus on 
describing the point integral method and its implementation, and presenting the numerical
experiments to demonstrate the convergence of PIM. 

\vspace{0.1in}
\noindent{\bf Related work:~}
Finite Element method is one of the most widely used method to solve the Poisson equations on surfaces. 
It has many good features. FEM converges fast: quadratically in $L^2$ and linearly in $H^1$ 
~\cite{Dziuk88}. FEM also works for solving
the eigensystem of Laplace-Beltrami operator~\cite{Strang73, Dodziuk76, Wardetzky06}. 
In computational aspect, for 
Poisson equation, the stiffness matrix obtained by FEM is symmetric, positive definite and sparse. 
There are lots of research on the fast solver for this kind of linear systems. Despite all these advantages, 
as we mentioned above, FEM requires a globally consistent mesh with well-shaped elements, which is very
difficult to generate for curved manifolds. 

Level set method embeds the manifolds into ambient spaces, and extends the differential equations into
ambient spaces, where the discretization of the differential equations can be done using Euclidean grids
of the ambient space~\cite{Bertalmio}. Level set method also has other advantages. For instance, with the 
help of implicit function, it becomes easy to estimate the normals and the curvatures of the manifold.
See the discussion in~\cite{Bertalmio, Bertalmio1} for more details. 
However, the main shortcoming of the level set method is that Euclidean grids are not intrinsic to 
the manifold and may introduce extra computational complexity, especially in the case where the ambient
dimension is high. 

There are other methods which solves PDEs on manifolds directly from point clouds. 
Liang and Zhao~\cite{Liang13} and Lai et al.~\cite{Lai13} propose the methods to
locally approximate the manifold and discretize the PDE using this local approximation,
and assemble them together into a global linear system for solving the PDE. Their methods 
are essentially FEM (level set method) but over a globally non-consistent mesh (implicit function). 
Although it may work very well in many empirical examples, it seems difficult to analyze the convergence 
behavior of their methods due to the non-consistency of the mesh or the implicit function. 

The point integral method is also related to the graph Laplacian with Gaussian weights. In~\cite{BelkinN05, 
Lafon04diffusion, Hein:2005:GMW, pcdlp2009}, it is shown that the graph Laplacian with Gaussian weights 
converges pointwisely to the Laplace-Beltrami operator when the vertices of the graph are assumed to 
sample the underlying manifold. The eigensystem of the weighted
graph Laplacian is shown to converge to the eigensystem of the Laplace-Beltrami operator when there 
is no boundary~\cite{CLEM_08, Dey:2010}, or there is Neumann boundary~\cite{Singer13}. 
Their proofs are done by relating the Laplacian to the heat operator, and thus it is essential to use 
the Gaussian kernel.

\vspace{0.1in}
\noindent{\bf Organization of the paper:~}
The remaining of the paper is organized as follows. In
Section~\ref{sec:problem}, we state the problems we want to solve. We derive the integral equations which approximate
the Poisson equations with Neumann and Dirichlet boundary conditions in Section~\ref{sec:neumann} and \ref{sec:dirichlet}
respectively. The details of discretizing the integral equations and its implementations are given in 
Section~\ref{sec:dis}. In Section~\ref{sec:weight}, we briefly describe the algorithm for estimating the volume weights
from point clouds. In Section~\ref{sec:examples}   we present several numerical results to show the performance of our method. 
At last, conclusion and remarks are made in Section~\ref{sec:discussion}.

\section{Statement of the problems}
\label{sec:problem}
In this paper, we consider the Poisson equation on a compact $k$-dimensional submanifold $\mathcal{M}$ 
in $\R^d$ with two kinds of boundary conditions: the Neumann boundary condition
\begin{equation}
\left\{\begin{array}{rl}
      -\Delta_\mathcal{M} u(\bx)=f(\bx),&\bx\in \mathcal{M} \\
      \frac{\p u}{\p \bn}(\bx)=g(\bx),& \bx\in \p \mathcal{M}
\end{array}
\right.
\tag{P1.a}
\label{eq-neumann}
\end{equation}
and the Dirichlet boundary condition, 
\begin{equation}
  \left\{\begin{array}{rl}
      -\Delta_\mathcal{M} u(\bx)=f(\bx),&\bx\in \mathcal{M} \\
      u(\bx)=g(\bx),&  \bx\in \p \mathcal{M}
\end{array}\right.
\tag{P2.a}
\label{eq-dirichlet}
\end{equation}
where $\Delta_\mathcal{M}$ is the Laplace-Beltrami operator on $\mathcal{M}$, 
and $\bn$ is the outward normal of $\p \mathcal{M}$. 
Let $g$ be the Riemannian metric tensor of $\mathcal{M}$. 
Given a local coordinate system $(x^1, x^2, \cdots, x^k)$,
the metric tensor $g$ can be represented by a matrix $(g_{ij})_{k\times k}$,
\begin{eqnarray}
  g_{ij}=\left<\frac{\p}{\p x^i},\frac{\p}{\p x^j}\right>,\quad i,j=1,\cdots,k.
\end{eqnarray}
Let $(g^{ij})_{k\times k}$ is the inverse matrix of $(g_{ij})_{k\times k}$, then it is well known that
the Laplace-Beltrami operator is
\begin{equation}
\Delta_{\mathcal{M}}  = \frac{1}{\sqrt{\det g}}\frac{\p}{\p x^i}(g^{ij}\sqrt{\det g} \frac{\p}{\p x^j})
\end{equation}
If $ \mathcal{M}$ is an open set in $\R^d$ with standard Euclidean metric, 
then $\Delta_\mathcal{M}$ becomes standard Laplace operator, i.e. $\Delta_{ \mathcal{M}} = \sum_{i=1}^d \frac{\p^2 }{\p {x^i}^2}$.

The other problem we consider is the following eigenproblem of the Laplace-Beltrami operator with 
the Neumann boundary 
\begin{equation}
\left\{\begin{array}{rcll}
      -\Delta_{\mathcal{M}} u(\bx) &=& \lambda u(x) ,&\bx\in  \mathcal{M} \\
      \frac{\p u}{\p \bn}(\bx) &=& 0,& \bx\in \p \mathcal{M}.
\end{array}
\right.  
\tag{P1.b}
\label{eigen-neumann}
\end{equation}
or the Dirichlet boundary
\begin{equation}
\left\{\begin{array}{rcll}
      -\Delta_{\mathcal{M}} u(\bx) &=& \lambda u(x) ,&\bx\in  \mathcal{M} \\
      u(\bx) &=& 0,& \bx\in \p \mathcal{M}.
\end{array}
\right.  
\tag{P2.b}
\label{eigen-dirichlet}
\end{equation}
A pair $(\lambda, u)$ solving the above equations is called an eigenvalue and the corresponding eigenfunction of the Laplace-Beltrami operator $\Delta_\mathcal{M}$. 
It is well known that the spectrum of the Laplace-Beltrami operator is discrete and all eigenvalues are nonnegative. 
Suppose $0=\lambda_0 \leq \lambda_1 \leq \lambda_2 \cdots $ are all eigenvalues listed in the ascending order and 
$\phi_0,~\phi_1,~\phi_2,~\cdots$ are their corresponding eigenfunctions. 
Then the problem we are interested in is how to compute these eigenvalues and the corresponding 
eigenfunctions from point clouds.  

So far, all the problems are stated in the continuous setting. Next, we will
introduce the discretization of the manifold $\mathcal{M}$.  Typically, the
explicit form of the submanifold $\mathcal{M}$ is not known. Instead, $\M$ is
represented by a set of sample points $P=\{\bfp_i|\; i=1,\cdots,n\}$,  and the
boundary of $\p \mathcal{M}$ is sampled by a subset
$S=\{\bs_i|\;i=1,\cdots,m\}\subset P$.
In addition, we may assume the following two vectors are given. 
The first one is $\bV = (V_1, \cdots, V_n)$ where $V_i$ is the volume weight of $\bfp_i$ on $\mathcal{M}$. 
The second one is $\bA= (A_1, \cdots, A_m)$ where $A_i$ is the volume weight of $\bs_i$ on $\p \mathcal{M}$. 
These two vectors are used to evaluate the integrals over $\mathcal{M}$ and $\p\mathcal{M}$.
For example, for any Lipschitz function $f$ on $\mathcal{M}$ and $g$ on $\p \mathcal{M}$, 
$\int_\mathcal{M} f(x) d\mu_x$ and  $\int_{\p \mathcal{M}} f(x) d\tau_x$ 
can be approximated by  $\sum_{i=1}^n f(\bfp_i) V_i$ and $\sum_{i=1}^m f(\bs_i) A_i$ respectively. 

\begin{remark}
We remark that if $\bV$ and $\bA$ are not given, they can be estimated as follows. 
\begin{itemize}
\item[(1)] If a mesh with the vertices $P$ approximating $\mathcal{M}$ is given, both weight vectors $\bV$ and $\bA$ can be easily
estimated from the given mesh by summing up the volume of the simplices incident to the vertices. 
One can obtain the input data which $h$-integral approximates $\M$ and $\p\M$ if the size of the elements in
the mesh is of order $h$ and the angle between the normal space of an element and the normal space of $\M$ at the
vertices of the element is of order $h^{1/2}$~\cite{Wardetzky06}. Note that there is no requirement on 
the shape of the elements in the mesh. 
\item[(2)] If the points in $P$$(S)$ are independent samples from uniform distribution on $\M$ ($\partial \M$),
then $\bV$$(\bA)$ can be taken as the constant vector $1/n$$(1/m)$. The integral of the functions
on $\M$$(\partial \M)$ can be estimated using Monte Carol method up to the volume of $\M$$(\partial \M)$;
\item[(3)] Finally, following~\cite{LuoSW09}, one can estimate the vectors $\bV$ and  $\bA$ by locally
approximating tangent spaces of $\M$ and $\p \M$, respectively. Specifically, for a point $p\in P$,
project the samples near to $p$ in $P$ onto the approximated tangent space at $p$ and take the volume
of the Voronoi cell of $p$ as its weight. In this way, one avoids constructing globally consistent meshes
for $\M$ and $\p\M$. 
\end{itemize}
\end{remark}

In the paper, we assume that the submanifold $\mathcal{M}$ and its boundary $\p \mathcal{M}$ are well resolved by the 
point set $P$ and $S$ in the sense that the integral of any $C^1$ function on $\mathcal{M}$ 
and $\p \mathcal{M}$ can be well approximated from the function values on $P$ and $S$ respectively. 
The issue becomes how to solve the Poisson equation on $(\mathcal{M}, \p \mathcal{M}$) from the sample points
$P$ and $S$ with guaranteed accuracy.

\section{The Neumann Boundary Condition}
\label{sec:neumann}
Let us consider the Poisson equation with the Neumann boundary condition given by~\eqref{eq-neumann}.
Given only unstructured point sets $P$ and $S$ without mesh information, it is difficult to discretize 
the Laplace-Beltrami operator, which is a differential operator. Our strategy is to first approximate 
the Poisson equation by an integral equation which involves no differentials but only the values 
of the unknown function, and then discretize the integral equation, which is relatively straightforward 
even without mesh.

We assume that the solution of the Neumann problem \myref{eq-neumann} is 
regular enough, at least belongs to $C^3(\mathcal{M})$. 
According to the theory of elliptic equations, this assumption could be true as long 
as $f$, $g$, the submanifold $\mathcal{M}$ and its boundary $\p\mathcal{M}$ are smooth enough.
Furthermore, we assume the function $R: \R^+ \rightarrow \R^+ $ is $C^1(\mathbb{R}^+)$ and  
$R(r) = 0$ for $\forall r >1$. Under these assumptions, we can have the following main theorem 
of this section. 

For a parameter $t$, let
\begin{equation*}
R_t(\bx,\by)=C_t\hr \quad \text{and} \quad \bar{R}_t(\bx,\by)=C_t\hbr,
\label{kernel-R}
\end{equation*}
where $C_t$ is a normalizing factor.
Recall that $\bar{R}(r) = \int_r^{+\infty} R(s)\mathd s$. 
Define the following operator for any function $u$ on $\M$ which makes the definition meaningful. 
\begin{equation}
L_{t}u(\bx) = \invt\int_\M \hk(u(\bx) - u(\by))d\mu_\by.
\label{eqn:laplace_integral}
\end{equation}
Let us call $L_t$ is the integral Laplace operator, which is clearly defined over $L^2(\M)$. 

In PIM, the approximate solution of the Neumann problem \eqref{eq-neumann} 
is obtained by solving the following integral equation with small $t$.
\begin{eqnarray}
\label{eq-integral-neumann}
L_tu(\by) = 2\int_{\p\mathcal{M}} g(\bx) \bar{R}_t(\bx,\by) \mathd \bx+\int_{\mathcal{M}} f(\bx) \bar{R}_t(\bx,\by) \mathd \bx.
\end{eqnarray}
Similarly, one can approximate the the eigenproblem of the Laplace-Beltrami operator with the Neumann boundary given by~\eqref{eigen-neumann}
by solving the following integral equation with small $t$.
\begin{eqnarray}
\label{eq-eigen-integral}
L_tu(\by) 
=\lambda\int_{\mathcal{M}} u(\bx) \bar{R}_t(\bx,\by) \mathd \mu_\bx.\quad 
\end{eqnarray}
Note that all the terms in \eqref{eq-integral-neumann} and \eqref{eq-eigen-integral}
are in the integral form, which is ready to be discretized by the point 
sets $P$ and $S$, and the associated volume weights $\bV$ and $\bA$. See Section~\ref{sec:dis} 
for the discretization of the above integral equations. 

Following theorem gives us an explanation that why the solution of the integral equation \eqref{eq-integral-neumann} could approximate the solution of 
the Neumann problem \eqref{eq-neumann}. 
\begin{theorem}
\label{theorem-local-error}
Let $u(\bx)$ be the solution of the Neumann problem given by \eqref{eq-neumann}, if $u\in C^3(\mathcal{M})$, then 
\begin{eqnarray}
\label{local-error}
\left\| -L_tu(\by)+2\int_{\p\mathcal{M}} g(\bx) \bar{R}_t(\bx,\by) \mathd \bx+\int_{\mathcal{M}} f(\bx) \bar{R}_t(\bx,\by) \mathd \bx\right\|_{L^2(\M)} \nonumber =O(t^{1/4}). \quad
\end{eqnarray}
\end{theorem}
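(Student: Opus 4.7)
My plan has three stages. First, use Green's first identity to absorb the Poisson equation, reducing the claim to an analytic estimate involving $u\in C^3(\M)$ and the kernel. Second, Taylor-expand to collapse the reduced expression to a single vector moment of $R_t$. Third, analyze that moment separately in the interior and inside an $O(\sqrt{t})$-wide boundary layer, using the divergence-theorem identity that $\bar{R}_t$ was designed to satisfy. To start, I would substitute $f=-\Delta_\M u$ and $g=\partial u/\partial\bn$ into the residual and apply Green's first identity $\int_\M \bar{R}_t\Delta_\M u\,\mathd\bx = -\int_\M \nabla u\cdot\nabla\bar{R}_t\,\mathd\bx + \int_{\p\M}g\bar{R}_t\,\mathd\tau$. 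One copy of the boundary integral then cancels against the factor of $2$ in the theorem, reducing the claim to $\|r_t\|_{L^2(\M)}=O(t^{1/4})$ for $r_t(\by) := -L_t u(\by)+\int_\M \nabla u(\bx)\cdot\nabla_\bx\bar{R}_t(\bx,\by)\,\mathd\bx +\int_{\p\M}g(\bx)\bar{R}_t(\bx,\by)\,\mathd\tau_\bx$.

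Using $\bar{R}'=-R$, the ambient gradient is $\nabla_\bx\bar{R}_t = -\frac{1}{2t}R_t(\bx-\by)$, and since $\nabla u(\bx)$ already lies in $T_\bx\M$, I get $\int_\M \nabla u\cdot\nabla\bar{R}_t\,\mathd\bx = -\frac{1}{2t}\int_\M R_t\nabla u(\bx)\cdot(\bx-\by)\,\mathd\bx$. I would then Taylor-expand both $u(\bx)-u(\by)$ and $\nabla u(\bx)\cdot(\bx-\by)$ to second order around $\by$; the quadratic (Hessian) contributions cancel exactly between $-L_t u$ and this gradient term, while the cubic Taylor remainder integrates against $R_t$ to $O(t^{1/2})$. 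The upshot is the pointwise identity $-L_t u(\by)+\int_\M\nabla u\cdot\nabla\bar{R}_t\,\mathd\bx = \frac{1}{2t}\nabla u(\by)\cdot M(\by) + O(t^{1/2})$, where $M(\by):=\int_\M R_t(\bx,\by)(\bx-\by)\,\mathd\bx$.

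Everything now hinges on $M(\by)$. For $\by$ at distance $\gg\sqrt{t}$ from $\p\M$, radial symmetry of $R$ in geodesic normal coordinates makes the tangential part of $M(\by)$ vanish to $O(t^{3/2})$, while its ambient-normal part (coming from the second fundamental form, of size $O(t)$) is killed upon dotting with the tangential vector $\nabla u(\by)$, and the boundary integral is exponentially small; hence $|r_t(\by)|=O(t^{1/2})$ pointwise in the interior. Inside the $O(\sqrt{t})$ boundary layer, I would use the divergence-theorem identity that $\bar{R}_t$ exists for: for any constant $\mathbf{v}\in\R^d$, $\frac{1}{2t}M(\by)\cdot\mathbf{v} = -\int_{\p\M}\bar{R}_t(\bx,\by)\bn_\bx\cdot\mathbf{v}\,\mathd\tau_\bx+O(t^{1/2})$. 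Setting $\mathbf{v}=\nabla u(\by)$ and replacing $\nabla u(\by)\cdot\bn_\bx$ by $g(\bx)$ at cost $O(\sqrt{t})$ (from $C^3$-regularity and $|\bx-\by|=O(\sqrt{t})$) cancels the boundary contribution in $r_t$ to leading order, leaving $|r_t(\by)|=O(1)$ on the layer. Since this layer has measure $O(\sqrt{t})$, its $L^2$ contribution is $O(t^{1/4})$ and dominates the interior's $O(t^{1/2})$, yielding the claim.

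The main obstacle is making the divergence-theorem identity quantitative on curved geometry: one must set up local coordinates near $\p\M$ that simultaneously respect the boundary and the normal bundle of $\M\subset\R^d$, and then track curvature corrections from the second fundamental forms of $\M\subset\R^d$ and $\p\M\subset\M$, while carrying out the $C^3$-regularity estimate that replaces $\nabla u(\by)\cdot\bn_\bx$ by $g(\bx)$ on the $\sqrt{t}$ scale. The suboptimal $t^{1/4}$ rate is dictated by this last step, where the pointwise control in the boundary layer degrades to $O(1)$ and only the thinness of the layer saves the $L^2$ estimate.
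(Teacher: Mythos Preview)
Your approach is correct and closely parallels the paper's, but is organized differently in two respects. The paper also opens with Green's identity to produce $\frac{1}{2t}\int(\bx-\by)\cdot\nabla u(\bx)\,R_t\,\mathd\bx + \int_{\p\Omega}g\,\bar{R}_t\,\mathd\bx$, but then Taylor-expands $u(\bx)-u(\by)$ around $\bx$ rather than $\by$. As a result the Hessian term $\frac{1}{4t}\int(\bx-\by)^T H_u(\bx)(\bx-\by)\,R_t\,\mathd\bx$ does \emph{not} cancel; instead the paper integrates it by parts once more, via $\partial_j\bar{R}_t=-\frac{1}{2t}(\bx_j-\by_j)R_t$, converting it into $\frac{1}{2}\int\Delta u\,\bar{R}_t\,\mathd\bx$ (which recombines with the left-hand side) plus the explicit boundary residual $-\frac{1}{2}\int_{\p\Omega}\bigl((\bx-\by)\otimes\bn\bigr):H_u(\bx)\,\bar{R}_t\,\mathd\bx$. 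This boundary term is then bounded directly in $L^2(\Omega)$ by $O(t^{1/4})$, with no interior/boundary-layer split.

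Your route---expanding around $\by$ so the Hessians cancel, then applying the divergence theorem to $\frac{1}{2t}\nabla u(\by)\cdot M(\by)$---lands on essentially the same residual, $\int_{\p\M}\bar{R}_t\,\bigl(\nabla u(\bx)-\nabla u(\by)\bigr)\cdot\bn_\bx\,\mathd\tau_\bx$, which to leading order is the paper's Hessian boundary term. Your organization is arguably cleaner in that no second integration by parts is needed. On the other hand, your interior/boundary-layer split and the radial-symmetry argument for $M(\by)$ are unnecessary: once the divergence theorem is invoked (and in the Euclidean case the identity $\frac{1}{2t}M(\by)\cdot\mathbf{v}=-\int_{\p\Omega}\bar{R}_t\,\bn\cdot\mathbf{v}\,\mathd\tau$ is exact, not just up to $O(t^{1/2})$), the quantity $\frac{1}{2t}\nabla u(\by)\cdot M(\by)$ is already a boundary integral that vanishes automatically for interior $\by$ by the compact support of $R$. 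A single $L^2$ estimate on the boundary integral then handles all $\by$ at once, exactly as the paper does.
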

\begin{remark}
Theorem \ref{theorem-local-error} by itself does not imply that the solution of the 
integral equation \eqref{eq-integral-neumann} respectively \eqref{eq-eigen-integral} converges to 
the solution of \eqref{eq-neumann} respectively \eqref{eigen-neumann} as $t\rightarrow 0$. The 
convergence requires the stability of the operator $L_t$. 
The rigorous proof for the convergence of the above integral equations is out of the scope of this paper. 
The interested readers are referred to the companion paper~\cite{SS14}.
\end{remark}

In the paper, we prove Theorem~\ref{theorem-local-error} for the case where $\mathcal{M}$ is an open set of
Euclidean space $\mathbb{R}^k$. For a general submanifold, the proof follows from the same idea 
but is technically more involved. The interested readers are referred to \cite{SS14}. 
In what follows, use $\Omega$ to denote the open set $\M$ in $\mathbb{R}^k$. First, we prove a technical lemma.
\begin{lemma}
  \label{prop-grad}
For any function $u\in C^3(\Omega)$, we have
\begin{eqnarray}
\label{eq-grad}
  \frac{1}{2t}\int_\Omega (\bx-\by)\cdot\nabla u(\bx)R_t(\bx,\by) \mathd \bx&=&\frac{1}{2t}\int_\Omega (u(\bx)-u(\by))
R_t(\bx,\by) \mathd \bx+
\frac{1}{2}\int_\Omega \Delta u\cdot\bar{R}_t(\bx,\by) \mathd \bx\nonumber\\
&&-\frac{1}{2}\int_{\p \Omega}((\bx-\by) \otimes\bn):\mathbf{H}_u(\bx)\bar{R}(\bx,\by) \mathd \bx+O(t^{1/2})
\end{eqnarray}
where $\mathbf{H}_u(\bx)$ is the Hessian matrix of $u$ at $\bx$, $\bn$ is the outer normal vector of $\p\Omega$.
\end{lemma}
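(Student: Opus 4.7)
The plan is to combine a Taylor expansion with an integration by parts built on the identity
\begin{equation*}
\nabla_{\bx}\bar R_t(\bx,\by) = -\,\frac{\bx-\by}{2t}\,R_t(\bx,\by),
\end{equation*}
which is immediate from $\bar R(r)=\int_r^{+\infty} R(s)\,\mathd s$ and the chain rule. This identity lets me trade the factor $(\bx-\by)R_t(\bx,\by)/(2t)$ for a gradient of $\bar R_t$ in $\bx$ and then move derivatives onto $u$ via the divergence theorem, producing the Laplacian in the interior and the $((\bx-\by)\otimes\bn):\mathbf{H}_u$ term on $\p\Omega$.

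First, I would Taylor expand $u(\by)$ about $\bx$. Since $u\in C^3(\Omega)$,
\begin{equation*}
u(\by)=u(\bx)+\nabla u(\bx)\cdot(\by-\bx)+\tfrac{1}{2}(\by-\bx)^T\mathbf{H}_u(\bx)(\by-\bx)+O(|\bx-\by|^3),
\end{equation*}
so $(\bx-\by)\cdot\nabla u(\bx)-(u(\bx)-u(\by))=\tfrac{1}{2}(\bx-\by)^T\mathbf{H}_u(\bx)(\bx-\by)+O(|\bx-\by|^3)$. Multiplying by $R_t(\bx,\by)/(2t)$ and integrating over $\Omega$, the cubic remainder is bounded by $\frac{C}{t}\int_\Omega|\bx-\by|^3 R_t(\bx,\by)\,\mathd \bx$. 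Because $R$ is supported in $[0,1]$, the kernel vanishes whenever $|\bx-\by|>2\sqrt{t}$, so $|\bx-\by|^3\le 8t^{3/2}$ on its support and the whole remainder is $O(t^{1/2})$. Hence the LHS minus $\tfrac{1}{2t}\int_\Omega(u(\bx)-u(\by))R_t\,\mathd\bx$ equals $\tfrac{1}{4t}\int_\Omega(\bx-\by)^T\mathbf{H}_u(\bx)(\bx-\by)R_t\,\mathd\bx+O(t^{1/2})$.

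Second, I would reshape this Hessian quadratic form via the gradient identity. Writing the integrand componentwise,
\begin{equation*}
\tfrac{1}{4t}\,\mathbf{H}_u(\bx)_{ij}(\bx-\by)_i(\bx-\by)_j R_t=-\tfrac{1}{2}\,\mathbf{H}_u(\bx)_{ij}(\bx-\by)_i\,\p_{x_j}\bar R_t,
\end{equation*}
and integrating by parts in $x_j$ (summed over $i,j$) produces three pieces: a boundary term $-\tfrac{1}{2}\int_{\p\Omega}((\bx-\by)\otimes\bn):\mathbf{H}_u(\bx)\bar R_t\,\mathd s$; a Kronecker-$\delta$ volume term $\tfrac{1}{2}\int_\Omega\Delta u(\bx)\bar R_t\,\mathd\bx$; and a remainder $\tfrac{1}{2}\int_\Omega\nabla(\Delta u)(\bx)\cdot(\bx-\by)\bar R_t\,\mathd\bx$ coming from differentiating $\mathbf{H}_u$ and using the identity $\sum_j\p_{x_j}\p_{x_i}\p_{x_j}u=\p_{x_i}\Delta u$. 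Since $|\bx-\by|\le 2\sqrt{t}$ on the support of $\bar R_t$ and $\nabla(\Delta u)$ is bounded ($u\in C^3$), this last piece is again $O(t^{1/2})$. Assembling the Taylor and integration-by-parts steps then yields the lemma.

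The main technical task is keeping each polynomial-in-$|\bx-\by|$ remainder under control; the compact support of $R$ is the essential input, since it forces $|\bx-\by|\le 2\sqrt t$ on the domain of integration. Combined with the standard normalization bounds $\int_\Omega R_t,\int_\Omega\bar R_t=O(1)$, this converts every remainder into $O(\sqrt{t})$ uniformly in $\by$, matching the advertised $O(t^{1/2})$. The only real subtlety is the bookkeeping in the componentwise integration by parts, particularly the reindexing that turns the third-derivative term into $\nabla(\Delta u)\cdot(\bx-\by)$ rather than something more complicated.
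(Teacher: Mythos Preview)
The proposal is correct and follows essentially the same route as the paper: Taylor expand to isolate the Hessian quadratic form, then use $\p_{x_j}\bar R_t=-\frac{1}{2t}(\bx_j-\by_j)R_t$ and integrate by parts to produce the $\Delta u$ interior term, the $((\bx-\by)\otimes\bn):\mathbf{H}_u$ boundary term, and a third-derivative remainder of size $O(t^{1/2})$. Your identification of that remainder as $\tfrac12\int_\Omega\nabla(\Delta u)\cdot(\bx-\by)\,\bar R_t$ is exactly the paper's $\tfrac12\int_\Omega(\bx_i-\by_i)\,\p_{ijj}u\,\bar R_t$ rewritten, and your compact-support argument for the $O(t^{1/2})$ bounds is equivalent to the paper's moment estimate $\int_\Omega\|\bx-\by\|^n R_t=O(t^{n/2})$.
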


\begin{proof}
The Taylor expansion of the function $u$ tells us that
\begin{eqnarray}
  u(\bx)-u(\by)=(\bx-\by)\cdot\nabla u(\bx)-\frac{1}{2}(\bx-\by)^T\mathbf{H}_u(\bx)(\bx-\by)+O(\|\bx-\by\|^3).
\end{eqnarray} 
Then, we have
\begin{eqnarray}
\label{eq-grad-taylor}
  &&\frac{1}{2t}\int_\Omega (\bx-\by)\cdot\nabla u(\bx)R_t(\bx,\by) \mathd \bx =\frac{1}{2t}\int_\Omega (u(\bx)-u(\by))R_t(\bx,\by) 
\mathd \bx\nonumber\\
&&+\frac{1}{4t}\int_\Omega (\bx-\by)^T\mathbf{H}_u(\bx)(\bx-\by)R_t(\bx,\by) \mathd \bx + O(t^{1/2}).
\end{eqnarray}
Here we use the fact that 
$\int_\Omega \|\bx-\by\|^nR_t(\bx,\by)\mathd \bx=O(t^{n/2})$.
Now, we turn to calculate the second term of \eqref{eq-grad-taylor}.
\begin{eqnarray}
\label{eq-hessian-euler}
&&\frac{1}{4t}\int_\Omega (\bx-\by)^T\mathbf{H}_u(\bx)(\bx-\by)R_t(\bx,\by) \mathd \bx\nonumber\\
&=&\frac{1}{4t}\int_\Omega (\bx_i-\by_i)(\bx_j-\by_j)\p_{ij} u(\bx)R_t(\bx,\by) \mathd \bx\nonumber\\
&=&-\frac{1}{2}\int_\Omega (\bx_i-\by_i)\p_{ij} u(\bx)\p_j\left(\bar{R}_t(\bx,\by)\right) \mathd \bx\nonumber\\
&=&\frac{1}{2}\int_\Omega \p_j(\bx_i-\by_i) \p_{ij} u(\bx)\bar{R}_t(\bx,\by) \mathd \bx
+\frac{1}{2}\int_\Omega (\bx_i-\by_i) \p_{ijj} u(\bx)\bar{R}_t(\bx,\by) \mathd \bx \nonumber\\
&&-\frac{1}{2}\int_{\p \Omega}(\bx_i-\by_i) \bn_j \p_{ij} u(\bx)\bar{R}_t(\bx,\by) \mathd \bx\nonumber\\
&=&\frac{1}{2}\int_\Omega \p_{ii} u(\bx)\bar{R}_t(\bx,\by) \mathd \bx
-\frac{1}{2}\int_{\p \Omega}(\bx_i-\by_i) \bn_j \p_{ij} u(\bx)\bar{R}_t(\bx,\by) \mathd \bx+O(t^{1/2})\nonumber\\
&=&\frac{1}{2}\int_\Omega \Delta u(\bx)\bar{R}_t(\bx,\by) \mathd \bx
-\frac{1}{2}\int_{\p \Omega}((\bx-\by) \otimes\bn):\mathbf{H}_u(\bx)\bar{R}_t(\bx,\by) \mathd \bx+O(t^{1/2}).
\end{eqnarray}
Here we used Einstein's summation convention. In the derivation of the second equality, we use the fact that 
\begin{eqnarray*}
  \p_j\bar{R}_t(\bx,\by)=-\frac{1}{2t}(\bx_j-\by_j)R_t(\bx,\by), 
\end{eqnarray*}
and for the fourth equality, we use the assumption that $u\in C^3(\Omega)$ to bound $\p_{ijj} u(\bx)$ and thus the second term is of the order $O(t^{1/2})$.
The lemma is proved by combining \eqref{eq-grad-taylor} and \eqref{eq-hessian-euler}.
\end{proof}

Now, we are ready to prove Theorem \ref{theorem-local-error}.
\begin{proof} {[\textit{Theorem \ref{theorem-local-error}}]}

Multiplying $\bar{R}_t(\bx,\by)$ on both sides of the Poisson equation, and by integral by parts, we have
\begin{eqnarray}
\label{eq-integral-part}
\int_\Omega \Delta u \cdot \bar{R}_t(\bx,\by) \mathd \bx
&=& -\int_\Omega \nabla u \cdot \nabla \bar{R}_t(\bx,\by) \mathd \bx
+\int_{\p \Omega} \frac{\p u}{\p \bn} \bar{R}_t(\bx,\by) \mathd \bx
\nonumber\\
&=& \frac{1}{2t}\int_\Omega (\bx-\by)\cdot\nabla u(\bx)R_t(\bx,\by) \mathd \bx
+\int_{\p \Omega} \frac{\p u}{\p \bn} \bar{R}_t(\bx,\by) \mathd \bx.
\end{eqnarray}
By Lemma \ref{prop-grad}, we have
\begin{eqnarray}
&&  \int_\Omega \Delta u \cdot \bar{R}_t(\bx,\by) \mathd \bx
= \frac{1}{2t}\int_\Omega (u(\bx)-u(\by))R_t(\bx,\by) \mathd \bx
+\frac{1}{2}\int_\Omega \Delta u\cdot\bar{R}_t(\bx,\by) \mathd \bx\nonumber\\
&&+ \int_{\p \Omega} \frac{\p u}{\p \bn} \bar{R}_t(\bx,\by) \mathd \bx
-\frac{1}{2}\int_{\p \Omega}((\bx-\by) \otimes\bn):\mathbf{H}_u(\bx)\bar{R}_t(\bx,\by) \mathd \bx
+O(t^{1/2}), \nonumber
\end{eqnarray}
which impies that
\begin{eqnarray}
  \int_\Omega \Delta u \cdot \bar{R}_t(\bx,\by) \mathd \bx
&=& \frac{1}{t}\int_\Omega (u(\bx)-u(\by))R_t(\bx,\by) \mathd \bx
+2\int_{\p \Omega} \frac{\p u}{\p \bn}(\bx) \bar{R}_t(\bx,\by) \mathd \bx
\nonumber\\
&&-\int_{\p \Omega}((\bx-\by) \otimes\bn):\mathbf{H}_u(\bx)\bar{R}_t(\bx,\by) \mathd \bx
+O(t^{1/2}).
\end{eqnarray}
Estimate the third term on the right hand side. 
\begin{eqnarray}
&&\int_{\Omega}\left|\int_{\p \Omega}((\bx-\by) \otimes\bn):\mathbf{H}_u(\bx)\bar{R}_t(\bx,\by) \mathd \bx\right|^2\mathd \by
\nonumber\\
&\le& \|\mathbf{H}_u(\bx)\|_{\infty} 
\int_{\Omega}\left(\int_{\p \Omega}\|\bx-\by\|\bar{R}_t(\bx,\by) \mathd \bx\right)^2\mathd \by\nonumber\\
&\le &\|\mathbf{H}_u(\bx)\|_{\infty}
\left\|\int_{\p \Omega}\|\bx-\by\|\bar{R}_t(\bx,\by) \mathd \bx\right\|_\infty 
\int_{\p \Omega}\left(\int_{\Omega}\|\bx-\by\|\bar{R}_t(\bx,\by) \mathd \by\right)\mathd \bx. \nonumber
\end{eqnarray}
Notice that 
\begin{eqnarray}
  \left\|\int_{\p \Omega}\|\bx-\by\|\bar{R}_t(\bx,\by) \mathd \bx\right\|_\infty=O(1)\quad \text{and} \quad 
\int_{\Omega}\|\bx-\by\|\bar{R}_t(\bx,\by) \mathd \by=O(t^{1/2}). 
\end{eqnarray}
Then we have
\begin{eqnarray}
\int_{\Omega}\left|\int_{\p \Omega}((\bx-\by) \otimes\bn):\mathbf{H}_u(\bx)\bar{R}_t(\bx,\by) \mathd \bx\right|^2\mathd \by=O(t^{1/2}).
\end{eqnarray}
Now if $u(\bx)$ be the solution of \eqref{eq-neumann}, it satisfies
\begin{eqnarray}
\label{eq-weak-neumann}
\int_\Omega \Delta u(\bx) \bar{R}_t(\bx,\by) \mathd \bx = 
-\int_\Omega f(\bx) \bar{R}_t(\bx,\by) \mathd \bx, \quad 
\forall \by\in \Omega.
\end{eqnarray}
We have proved the theorem.
\end{proof}


\begin{remark}
Theorem \ref{theorem-local-error} also holds for those $R$ which decays exponentially, such as the Gaussian function. 
The proof is similar. 
\end{remark}



\section{The Dirichlet boundary condition}
\label{sec:dirichlet}
In this section, we consider the Poisson equation with the Dirichlet boundary given by \eqref{eq-dirichlet}.  
We bridge the difference between the Neumann boundary and the Dirichlet boundary using the so-called the 
Robin boundary which mixes the Neumann boundary with the Dirichlet boundary. Specifically, consider the 
following problem
\begin{equation}
  \left\{\begin{array}{rl}
     -\Delta u(\bx) = f(\bx),& \bx\in \M,\\
      u(\bx)+\beta\frac{\p u}{\p \bn}(\bx) = g(\bx),&  \bx\in \p \M.
\end{array}\right.
\tag{P3.a}
\label{eq-robin}
\end{equation}
where $\beta>0$ is a parameter. 

As there is a Neumann component in the Robin boundary, we can solve the Robin problem~\eqref{eq-robin}
using the framework of solving Neumann problem~\eqref{eq-neumann} in Section~\ref{sec:neumann}.
Specifically, we approximate the Robin problem \eqref{eq-robin} by the following integral 
equation.
\begin{eqnarray}
\label{eqn:integral_robin}
L_tu(\by)
-\frac{2}{\beta}\int_{\p \M} (g(\bx)-u(\bx)) \bar{R}_t(\bx,\by) \mathd \tau_\bx
= \int_{\M} f(\bx) \bar{R}_t(\bx,\by) \mathd \mu_\bx.\quad\quad
\end{eqnarray}
Similarly, the corresponding eigenproblem of the Laplace-Beltrami operator with zero Robin boundary (i.e., $g=0$) can 
be approximated by the following integral equation.
\begin{eqnarray}
\label{eqn:eigen_integral_robin}
L_tu(\by)+\frac{2}{\beta}\int_{\p \M} u(\bx) \bar{R}_t(\bx,\by) \mathd \tau_\bx
= \lambda \int_{\M} u(\bx) \bar{R}_t(\bx,\by) \mathd \mu_\bx
\end{eqnarray}
Theoretically, it can be shown that the solution of \eqref{eq-robin} is a good 
approximation of the solution of the Dirichlet problem \eqref{eq-dirichlet} when $\beta$ is small. 
\begin{theorem}[\cite{SS14}] 
\label{thm:diff_D2R}
Suppose $u$ is the solution of the Dirichlet problem \eqref{eq-dirichlet} and 
$u_{R,\beta}$ is the solution of the Robin problem~\eqref{eq-robin}, then
  \begin{eqnarray}
    \|u-u_{R,\beta}\|_{H^1(\M)}\le C \beta^{1/2}\|u\|_{H^2(\M)}.
  \end{eqnarray}
\end{theorem}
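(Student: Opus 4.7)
The plan is to study the difference $w=u-u_{R,\beta}$ and derive a boundary value problem for $w$ whose right-hand side is small in $\beta$. Subtracting the two PDEs gives $-\Delta w=0$ on $\M$. On $\partial\M$, since $u=g$ and $u_{R,\beta}+\beta\,\partial u_{R,\beta}/\partial\bn=g$, a short computation yields the mixed (Robin) boundary condition
\begin{equation*}
w+\beta\,\frac{\partial w}{\partial\bn}=\beta\,\frac{\partial u}{\partial\bn}\quad\text{on }\partial\M.
\end{equation*}
Thus all of the boundary data for $w$ comes with a favorable prefactor of $\beta$.

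Next I would run the standard energy estimate: multiply $-\Delta w=0$ by $w$, integrate over $\M$, and use the divergence theorem to obtain
\begin{equation*}
\int_\M|\nabla w|^2\,\mathd\mu=\int_{\partial\M}w\,\frac{\partial w}{\partial\bn}\,\mathd\tau.
\end{equation*}
Substituting $\partial w/\partial\bn=\partial u/\partial\bn-w/\beta$ from the Robin condition rewrites this as
\begin{equation*}
\int_\M|\nabla w|^2\,\mathd\mu+\frac{1}{\beta}\int_{\partial\M}w^2\,\mathd\tau=\int_{\partial\M}w\,\frac{\partial u}{\partial\bn}\,\mathd\tau.
\end{equation*}
A weighted Cauchy--Schwarz on the right-hand side (splitting as $\tfrac{1}{2\beta}w^2+\tfrac{\beta}{2}(\partial u/\partial\bn)^2$) absorbs half the boundary term on the left and leaves
\begin{equation*}
\|\nabla w\|_{L^2(\M)}^2+\frac{1}{2\beta}\|w\|_{L^2(\partial\M)}^2\le\frac{\beta}{2}\left\|\frac{\partial u}{\partial\bn}\right\|_{L^2(\partial\M)}^2.
\end{equation*}

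Applying the trace theorem $\|\partial u/\partial\bn\|_{L^2(\partial\M)}\le C\|u\|_{H^2(\M)}$ gives both $\|\nabla w\|_{L^2(\M)}\le C\beta^{1/2}\|u\|_{H^2(\M)}$ and the stronger estimate $\|w\|_{L^2(\partial\M)}\le C\beta\,\|u\|_{H^2(\M)}$. To close the argument in $H^1(\M)$, I still need to control $\|w\|_{L^2(\M)}$; here I would invoke a Poincar\'e-type inequality of the form $\|w\|_{L^2(\M)}\le C(\|\nabla w\|_{L^2(\M)}+\|w\|_{L^2(\partial\M)})$, valid on a compact manifold with boundary, which yields $\|w\|_{L^2(\M)}\le C\beta^{1/2}\|u\|_{H^2(\M)}$. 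Combining the gradient and $L^2$ bounds gives the claim.

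The only genuinely delicate point is the Poincar\'e-trace inequality on the curved manifold $\M$, which I would either invoke from standard elliptic theory or prove by contradiction via a compactness argument exploiting the fact that a function with zero gradient and zero trace must vanish. Everything else is a direct energy/trace computation, and the gain of $\beta^{1/2}$ comes entirely from the $\beta$ in front of $\partial u/\partial\bn$ in the boundary data combined with the weighted Cauchy--Schwarz inequality.
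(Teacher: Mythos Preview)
Your argument is correct. The paper does not actually prove this theorem: it is stated with a citation to the companion paper~\cite{SS14} and no proof is given here, so there is nothing in the present paper to compare your approach to. That said, the energy method you outline---subtracting to obtain a harmonic $w$ with Robin data $w+\beta\,\partial w/\partial\bn=\beta\,\partial u/\partial\bn$, testing against $w$, absorbing via weighted Cauchy--Schwarz, and closing with the trace theorem and the Poincar\'e inequality $\|w\|_{L^2(\M)}\le C(\|\nabla w\|_{L^2(\M)}+\|w\|_{L^2(\partial\M)})$---is the natural and standard route, and is almost certainly what the cited paper does as well. Every step is sound; the only point worth making more precise in a write-up is that the integration-by-parts identity is justified at the level of the weak formulation of the Robin problem (so one does not need to first establish $H^2$ regularity of $u_{R,\beta}$ to make sense of $\partial u_{R,\beta}/\partial\bn$).
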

Therefore, we can approximate the Dirichlet problem~\eqref{eq-dirichlet} and 
the corresponding eigenproblem using the integral equation~\eqref{eqn:integral_robin} and \eqref{eqn:eigen_integral_robin}
respectively by choosing small enough $\beta$. In a companion paper~\cite{SS14}, 
it is shown that the above approximations indeed converge as $t$ goes to $0$. Note that 
the choice of $\beta$ depends on $t$ and has to go to $0$ as $t$ goes to $0$.
Again the integral equations \eqref{eqn:integral_robin} and \eqref{eqn:eigen_integral_robin}
are ready to be discretized by the input data $(P, S, \bV, \bA)$. See Section~\ref{sec:dis} 
for the discretization of the above integral equations. 

\subsection{Iterative Solver based on Augmented Lagrangian Multiplier}
Notice that when $\beta$ is small, the linear system derived from the above approach becomes ill-conditioned.
We now propose an iterative method based on the Augmented Lagrange method (ALM) to alleviate the dependence
on the choice of $\beta$. We have not been able to show the convergence of this iterative method. However, 
we shall provide the theoretical evidence (see Section \ref{convergence-alm}) as well as the empirical 
evidence (see Section~\ref{sec:examples}) to support its convergence. 

\begin{algorithm*}[t]
\floatname{algorithm}{Procedure}
\caption{ALM for Dirichlet Problem}
\label{alg-diri-approx}
\begin{algorithmic}[1]
\STATE $k=0$, $w^0=0$.
\REPEAT  
\STATE Solving the following integral equation to get $v^k$,
\begin{eqnarray*}
L_tv^k(\by)&-&\frac{2}{\beta}\int_{\p \mathcal{M}} (g(\bx)-v^k(\bx)+\beta w^k(\bx)) \bar{R}_t(\bx,\by) \mathd \tau_\bx\
=\int_{\mathcal{M}} f(\bx) \bar{R}_t(\bx,\by) \mathd \mu_\bx.
\end{eqnarray*}
\STATE $w^{k+1}=w^k+\frac{1}{\beta}(g-(v^k|_{\p\M}))$, $k = k+1$
\UNTIL {$\|g-(v^{k-1}|_{\p \mathcal{M}})\| == 0$}
\STATE $u=v^k$
\end{algorithmic}
\end{algorithm*}

It is well known that the Dirichlet problem can be reformulated using the following
constrained variational problem:
\begin{eqnarray}
\label{opt-dirichelet}
  \min_{v\in H^1(\mathcal{M})} \frac{1}{2}\int_\mathcal{M} |\nabla v(\bx)|^2 \mathd \bx+\int_\mathcal{M} f(\bx) v(\bx) \mathd \bx,\quad \mbox{subject to:} \; \;\;
v(\bx)|_{\p\mathcal{M}}=g(\bx), 
\end{eqnarray}
and the ALM method can be used to solve the above problem as follows. 
Recall that for a constrained optimization problem
\begin{eqnarray}
  \min_{x} F(x),\quad \mbox{subject to:} \quad g(x)=0,
\end{eqnarray}
the ALM method solves it by the following iterative process
\begin{itemize}
\item $x^k=\arg\min_x L(x,w^k)$, where $  L(x,w)=F(x)+<w,g(x)>+\frac{1}{2\beta}\|g(x)\|^2$
\item $w^{k+1}=w^k+\frac{1}{\beta}g(x^k)$.
\end{itemize}
In essence, the ALM method solves a constrained problem by iteratively solving a 
sequence of unconstrained problem. It is well known that the convergence of ALM method
is robust to the choice of the parameter $\beta$. 

Applying the ALM method directly to the problem \eqref{opt-dirichelet}, 
the unconstrained problem which need to be solved iteratively is
\begin{eqnarray}
\label{alg-2}
\min_v &&\frac{1}{2}\int_\mathcal{M} |\nabla v(\bx)|^2 \mathd \mu_\bx+\int_\mathcal{M} f(\bx) \cdot v(\bx) \mathd \mu_\bx\nonumber\\
&&+\int_{\p\mathcal{M}}w^k(\bx)\cdot(g(\bx)-v(\bx))\mathd \tau_\bx
+\frac{1}{2\beta}\int_{\p\mathcal{M}}(g(\bx)-v(\bx))^2\mathd \tau_\bx.
\end{eqnarray}
Using the variational method, one can show that the solution to \eqref{alg-2}
is exactly the solution to the following Poisson equation with the Robin boundary:
\begin{eqnarray}
\label{eq-robin-ALM}
  \left\{\begin{array}{cl}
      \Delta v(\bx)=f(\bx),& \bx\in \mathcal{M},\\
      v(\bx)+\beta\frac{\p v}{\p \bn}(\bx)=g(\bx)+\beta w^k(\bx),&  \bx\in \p \mathcal{M}.
\end{array}\right.
\end{eqnarray}
Therefore, we have derived a method to solve the Dirichlet problem~\eqref{eq-dirichlet} by 
solving a sequence of the Robin problem in~\eqref{eq-robin-ALM} with 
the iteratively updated $w^k$. If the iterative process converges, we obtain
the correct boundary condition, i.e., $v(\bx) = g(\bx)$ for $\bx \in \partial \M$. 
In fact, $w^k$ converges to $\frac{\p v}{\p \bn}(\bx)$ for $\bx \in \partial \M$. 
So, it is not necessary to choose $\beta$ small to achieve the prescribed 
Dirichlet boundary. Finally, we summarize the above iterative method for 
solving the Dirichlet problem in Procedure~\ref{alg-diri-approx} (ALM for Dirichlet Problem).



\subsection{Towards a convergence analysis of the ALM iteration}
\label{convergence-alm}
We have not been able to proved the convergence of the above ALM iteration described in Procedure~\ref{alg-diri-approx}. 
However,  if the Robin problem \eqref{eq-robin-ALM} is exactly solved in each iteration, we can prove that the ALM iteration 
converges and hence the obtained solution converges the exact solution of the Dirichlet problem~\eqref{eq-dirichlet}.
\begin{theorem}
\label{theorem-converge-ALM}
  Let $u$ be the solution of the Dirichlet problem \myref{eq-dirichlet} 
and $v^k$ be the exact solution of \eqref{eq-robin-ALM} in the $k$th step. Then 
  \begin{eqnarray}
    \lim_{k\rightarrow \infty} \|u-v^k\|_{H^1(\M)}=0.
  \end{eqnarray}
\end{theorem}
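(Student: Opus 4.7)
The iteration is the classical augmented-Lagrangian (Uzawa) scheme applied to the constrained quadratic problem~\eqref{opt-dirichelet}, so the plan is to prove the standard monotone decrease of the Lagrange-multiplier residual in $L^2(\partial\M)$ via a telescoping energy identity, and then upgrade the resulting boundary and gradient decay to $H^1(\M)$-convergence through a Poincar\'e--trace estimate.

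First, I would introduce the error variables $e^k := u - v^k$ and $\lambda^k := w^k - \partial u/\partial\bn\big|_{\partial\M}$. Subtracting \eqref{eq-dirichlet} from \eqref{eq-robin-ALM} (read with the sign convention of~\eqref{eq-robin}) shows that $e^k$ is harmonic on $\M$ and satisfies the Robin-type relation
\begin{equation*}
  e^k + \beta\,\tfrac{\partial e^k}{\partial \bn} = -\beta\,\lambda^k \quad \text{on } \partial\M,
\end{equation*}
while the multiplier update rewrites as $\lambda^{k+1} = \lambda^k + \tfrac{1}{\beta}\,e^k|_{\partial\M}$. Existence of the trace $\partial u/\partial\bn \in L^2(\partial\M)$ is ensured by elliptic regularity for~\eqref{eq-dirichlet} under the smoothness assumptions.

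Second, I would derive the telescoping identity in two steps: (i) test $\Delta e^k = 0$ against $e^k$ and substitute the Robin relation to obtain $\|\nabla e^k\|_{L^2(\M)}^2 + \tfrac{1}{\beta}\|e^k\|_{L^2(\partial\M)}^2 = -\int_{\partial\M}\lambda^k e^k\,\mathd \tau$; and (ii) expand $\|\lambda^{k+1}\|_{L^2(\partial\M)}^2$ using the update rule and eliminate the cross term $\int_{\partial\M}\lambda^k e^k\,\mathd \tau$ between the two expressions. Combining them yields
\begin{equation*}
  \|\lambda^k\|_{L^2(\partial\M)}^2 - \|\lambda^{k+1}\|_{L^2(\partial\M)}^2 = \tfrac{2}{\beta}\,\|\nabla e^k\|_{L^2(\M)}^2 + \tfrac{1}{\beta^2}\,\|e^k\|_{L^2(\partial\M)}^2.
\end{equation*}

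Summing in $k$ shows that $\{\|\lambda^k\|_{L^2(\partial\M)}\}$ is nonincreasing and that $\sum_k(\|\nabla e^k\|_{L^2(\M)}^2 + \|e^k\|_{L^2(\partial\M)}^2) < \infty$; in particular both terms go to zero. The Poincar\'e--trace inequality $\|e^k\|_{L^2(\M)} \le C(\|\nabla e^k\|_{L^2(\M)} + \|e^k\|_{L^2(\partial\M)})$ then upgrades this to $\|u-v^k\|_{H^1(\M)} \to 0$. The main obstacle I anticipate is the coefficient bookkeeping in the second step: the cancellation of the cross term and of the boundary $\|e^k\|_{L^2(\partial\M)}^2$ contribution relies on exact factors of $\tfrac{1}{2\beta}$, and any sign slip would destroy monotonicity. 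A secondary caveat is that this argument yields only summability rather than a rate; a geometric rate would require the extra estimate $\|\lambda^k\|_{L^2(\partial\M)}^2 \le C(\|\nabla e^k\|_{L^2(\M)}^2 + \|e^k\|_{L^2(\partial\M)}^2)$, which one could hope to get from trace and interior elliptic bounds applied to the harmonic function $e^k$.
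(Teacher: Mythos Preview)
Your proposal is correct and follows essentially the same energy-decrease strategy as the paper: both arguments track the same Lyapunov quantity, since your $\lambda^{k+1}$ coincides (up to sign) with the paper's $\partial e^{k}/\partial\bn|_{\partial\M}$, and both show it is monotone nonincreasing in $L^2(\partial\M)$ by testing the harmonic error against itself.

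There is, however, a worthwhile technical difference. The paper applies Cauchy--Schwarz to the cross term $\int_{\partial\M}\tfrac{\partial e^k}{\partial\bn}\tfrac{\partial e^{k-1}}{\partial\bn}$ and obtains only the inequality $a^{k-1}-a^k \ge \tfrac{2}{\beta}\|\nabla e^k\|_{L^2(\M)}^2$, from which it extracts $\|\nabla e^k\|\to 0$ via convergence of the monotone sequence $\{a^k\}$; it then has to argue separately (through a weak-limit step on $\partial\M$) that $e^k|_{\partial\M}\to 0$ before invoking Poincar\'e. Your telescoping \emph{identity} is sharper: it recovers the additional term $\tfrac{1}{\beta^2}\|e^k\|_{L^2(\partial\M)}^2$ that the paper discards in its Cauchy--Schwarz step, which gives summability of both $\|\nabla e^k\|_{L^2(\M)}^2$ and $\|e^k\|_{L^2(\partial\M)}^2$ directly and makes the final Poincar\'e--trace step immediate. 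So your route is the same in spirit but cleaner in execution, and it yields the slightly stronger conclusion $\sum_k\|u-v^k\|_{H^1(\M)}^2<\infty$.
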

\begin{proof}
Let $e^k(\bx)=v^k(\bx)-u(\bx)$, then $e^k$ satisfies the following equation
\begin{eqnarray}
\label{eq-error}
  \left\{\begin{array}{cl}
      \Delta e^k(\bx)=0,& \bx\in \M,\\
      e^k(\bx)+\beta\frac{\p e^k}{\p \bn}(\bx)=\beta\frac{\p e^{k-1}}{\p \bn}(\bx),& \bx\in \p \M.
\end{array}\right.
\end{eqnarray}
Multiplying $e^k$ to both sides of the above equation and integrate by part, 
\begin{eqnarray}
  -\int_\M |\nabla e^k|^2 \mathd \mu_\bx +\int_{\p \M} \nd{e^k}(\bx)e^k(\bx)\mathd \tau_\bx =0, 
\end{eqnarray}
which together with the boundary condition in~\eqref{eq-error} gives us that
\begin{eqnarray}
  \int_\M |\nabla e^k|^2 \mathd \mu_\bx +\beta\int_{\p \M} \left|\nd{e^k}\right|^2\mathd \tau_\bx =\beta\int_{\p \M} \nd{e^k}(\bx)\cdot\nd{e^{k-1}}(\bx)\mathd \tau_\bx.
\end{eqnarray}
Using the inequality 
\begin{eqnarray}
\int_{\p \M} \nd{e^k}(\bx)\cdot\nd{e^{k-1}}(\bx)\mathd \tau_\bx \le \frac{1}{2}\int_{\p \M} \left|\nd{e^k}\right|^2\mathd \tau_\bx
+\frac{1}{2}\int_{\p \M} \left|\nd{e^{k-1}}\right|^2\mathd \tau_\bx, 
\end{eqnarray}
we obtain
\begin{eqnarray}
\label{ineq-ek}
  \int_\M |\nabla e^k|^2 \mathd \mu_\bx +\frac{\beta}{2}\int_{\p \M} \left|\nd{e^k}\right|^2\mathd \tau_\bx \le \frac{\beta}{2}\int_{\p \M} \left|\nd{e^{k-1}}\right|^2\mathd \tau_\bx.
\end{eqnarray}
Now, let
\begin{eqnarray}
 a^k=\int_{\p \M} \left|\nd{e^k}\right|^2\mathd \tau_\bx,\quad k=1,2,\cdots 
\end{eqnarray}
Since $\int_\M |\nabla e^k|^2 \mathd \mu_\bx\ge 0$ and $\beta>0$, we have
\begin{eqnarray}
  0\le a^k\le a^{k-1},\quad \forall k\in \mathbb{N}
\end{eqnarray}
which means that $\{a^k\}$ is a monotonously decreasing sequence which is bounded below by 0. Then the sequence $\{a^k\}$ converges which implies that 
\begin{eqnarray}
  \lim_{k\rightarrow \infty}(a^{k-1}-a^k)=0.
\end{eqnarray}
Then, let $k$ go to $\infty$ in inequality \myref{ineq-ek}, we have
\begin{eqnarray}
 0\le  \lim_{k\rightarrow \infty} \int_\M |\nabla e^k|^2 \mathd \mu_\bx\le \frac{\beta}{2}\lim_{k\rightarrow \infty} (a^{k-1}-a^{k})=0
\end{eqnarray}
which implies that 
\begin{eqnarray}
 \lim_{k\rightarrow \infty} \int_\M |\nabla e^k|^2 \mathd \mu_\bx=0.
\end{eqnarray}
  Since $e^k$ satisfies \myref{eq-error}, we have 
  \begin{eqnarray}
    -\int_{\M} \nabla e^k\cdot \nabla \varphi\mathd \mu_\bx +\int_{\p\M}\nd{e^k}\varphi\mathd \tau_\bx=0,\quad \forall \varphi\in H^1(\M).
  \end{eqnarray}
Letting $k\rightarrow \infty$, we get 
\begin{eqnarray}
  \lim_{k\rightarrow \infty}\int_{\p\M}\nd{e^k}\varphi\mathd \tau_\bx= 0,\quad \forall \varphi\in H^1(\M).
\end{eqnarray}
By the boundary condition $(e^k+\beta \nd{e^k})|_{\p\M}=\beta\nd{e^{k-1}}|_{\p\M}$, we can derive 
\begin{eqnarray}
  \lim_{k\rightarrow \infty}\int_{\p\M}e^k\,\varphi\mathd \tau_\bx= 0,\quad \forall \varphi\in H^1(\M).
\end{eqnarray}
Then by the Poincare inequality, it is easy to show that 
\begin{eqnarray}
  \lim_{k\rightarrow \infty}\int_{\M}|e^k|^2\mathd \mu_\bx= 0
\end{eqnarray}
\end{proof}

\begin{remark}
Theorem \ref{theorem-converge-ALM} does not imply the convergence of Procedure \ref{alg-diri-approx}. 
In Procedure \ref{alg-diri-approx}, $u^k$ is not an exact solution of the Robin problem as 
is required in the above theorem. Theoretically, we can not exclude the case that the error in 
each iteration accumulates so that the iterative process diverges. 
However in the experiments we have done, Procedure~\ref{alg-diri-approx} always converges to the right
answer.   
\end{remark}

\section{Discretization of the integral equations}
\label{sec:dis}
In this section, we discretize the integral equations derived in Section \ref{sec:neumann} 
and Section \ref{sec:dirichlet} over the given input data $(P,S,\mathbf{V},\mathbf{A})$.
We assemble three matrices from the input data $(P,S,\mathbf{V},\mathbf{A})$ which are used to do numerical integral. 

The first matrix, denoted $\mathcal{L}$, is an $n\times n$ matrix defined as for any $\bfp_i, \bfp_j \in P$
\begin{eqnarray}
\label{dis_laplace}
\mathcal{L}_{ij} = 
\left\{\begin{array}{rl}
      -\invt \hkpipj V_j \text{~~if~~} i\neq j \\
		-\sum_{i\neq j} \mathcal{L}_{ij} \text{~~if~~} i = j. 
\end{array}
\right.  
\end{eqnarray}
For any function  $u\in C^1(M)$, let ${\bf u} = (u_1, \cdots, u_n)$ with $u_i = u(p_i)$ for any $p_i \in P$. Then 
$\mathcal{L}\bu$ is used to approximate the integral
\begin{eqnarray}
  \invt \int_\M \hk (u(\bx)-u(\by)) d\mu_\by
\end{eqnarray}
The matrix $\mathcal{L}$ was introduced as a discrete Laplace operator in~\cite{BelkinN05}. 

The second matrix, denoted $\mathcal{I}$, is also an $n\times n$ matrix defined as for any $\bfp_i, \bfp_j \in P$
\begin{eqnarray}
\label{domain_int}
\mathcal{I}_{ij} = \rhkpipj V_j 
\end{eqnarray}
For any function  $f\in C^1(M)$, let ${\bf f} = (f_1, \cdots, f_n)$ with $f_i = f(\bfp_i)$ for any $\bfp_i \in P$. Then
$\mathcal{I}{\bf f}$ is used to approximate the integral
\begin{eqnarray}
   \int_\M \rhk f(\by) d\mu_\by.
\end{eqnarray}

The third matrix, denoted $\mathcal{B}$, is an $n\times m$ matrix defined as for any $\bfp_i \in P$ and any $\bs_j \in S$
\begin{eqnarray}
\label{boundary_int}
\mathcal{B}_{ij} = \rhkpisj A_j 
\end{eqnarray}
For any function $g\in C^1(\p M)$, let ${\bf g} = (g_1, \cdots, g_n)$ with $g_i = g(\bs_i)$ for any $\bs_i \in S$
Then $\mathcal{B}{\bf g}$ is used to approximate the integral
\begin{eqnarray}
  \int_{\p \M} \rhk g(\by) d\tau_\by.
\end{eqnarray}

Now we are ready to describe the algorithms to solve the Poisson equation with different boundary conditions.
As we will see, they are simple and easy to implement. 
The following algorithm PoissonNeumann is used to solve the Poisson equation with the Neumann boundary. 
The derivation of the algorithm is described in the Section~\ref{sec:neumann}.  
\begin{algorithm}[h!]
\floatname{algorithm}{Algorithm}
\caption{ { PoissonNeumann($P, S, \bV, \bA, {\bf f}, {\bf g}, t$)} }
\begin{algorithmic}[1]
   \STATE Compute the matrices $\mathcal{L}, \mathcal{I}, \mathcal{B}$. 
	\STATE Set ${\bf b } = 2 \mathcal{B} {\bf g} + \mathcal{I} {\bf f}$. 
	\STATE Solve the linear system $\mathcal{L} {\bf u} = {\bf b}$ and obtain ${\bf u} = (u_1, \cdots, u_n)$.
	\STATE Output ${\bf u}$. 
\end{algorithmic}
\end{algorithm}

The eigenvalues and the eigenfunctions of the Laplace-Beltrami operator with the Neumann boundary condition are approximated 
by that of the generalized eigenproblem $\mathcal{L}{\bf v} = \gamma \mathcal{I}{\bf v} $.  Specifically, 
the $k$th smallest eigenvalue $\gamma_k$ and its corresponding ${\bf v}_k$ are used to approximate $\lambda_k$  and $\phi_k$ respectively. 
See Algorithm $3$ EigenNeumann. 
\begin{algorithm}[h!]
\floatname{algorithm}{Algorithm}
\caption{ { EigenNeumann($P, S, \bV, \bA, t$)} }
\begin{algorithmic}[1]
   \STATE Compute the matrices $\mathcal{L}, \mathcal{I}$. 
	\STATE Solve the generalized eigenproblem $\mathcal{L} {\bf v} =  \gamma \mathcal{I}{\bf v}$
 and obtain the eigenvalues
	$0=\gamma_0\leq \gamma_1 \leq \gamma_2, \cdots$ and the corresponding eigenvectors ${\bf v}_0, ~{\bf v}_1, ~{\bf v}_2, \cdots$
	\STATE Output $\gamma_i$ and ${\bf v}_i$. 
\end{algorithmic}
\end{algorithm}

For the Dirichlet problem, we approximate the solutions using those of the Robin problem with small $\beta$. 
The algorithms for solving the Dirichlet problem and the corresponding eigenproblems are summarized in 
Algorithm 4 PoissonDirichlet and Algorithm 5 EigenDirichlet, respectively. In the following algorithm, 
for any subset $X\subset P$,  use $X$ to also denote the set of indices of the elements in $X$.   

\begin{algorithm}[h!]
\floatname{algorithm}{Algorithm}
\caption{ { PoissonDirichlet($P, S, \bV, \bA, {\bf f}, {\bf g}, t, \beta$)} }
\begin{algorithmic}[1]
   \STATE Compute the matrices $\mathcal{L}, \mathcal{I}, \mathcal{B}$. 
	\STATE Set ${\bf b } = \frac{2}{\beta} \mathcal{B} {\bf g} + \mathcal{I} {\bf f}$. 
	\STATE Set $K = \mathcal{L}$ and modify $K(P, S) = K(P, S) + \frac{2}{\beta}\mathcal{B}$
	\STATE Solve the linear system $K {\bf u} = {\bf b}$ and obtain ${\bf u} = (u_1, \cdots, u_n)$.
	\STATE Output ${\bf u}$. 
\end{algorithmic}
\end{algorithm}

\begin{algorithm}[h!]
\floatname{algorithm}{Algorithm}
\caption{ { EigenDirichlet($P, S, \bV, \bA, t, \beta$)} }
\begin{algorithmic}[1]
   \STATE Compute the matrices $\mathcal{L}, \mathcal{I}, \mathcal{B}$. 
	\STATE Set $K = \mathcal{L}$ and modify $K(P, S) = K(P, S) + \frac{2}{\beta}\mathcal{B}$
	\STATE Solve the generalized eigenproblem $K{\bf v} =  \gamma \mathcal{I}{\bf v}$ 
and obtain the eigenvalues
	$0<\gamma_1\leq \gamma_2 \leq \gamma_3, \cdots$ and the corresponding eigenvectors ${\bf v}_0, ~{\bf v}_1, ~{\bf v}_2, \cdots$
	\STATE Output $\gamma_i$ and ${\bf v}_i$. 
\end{algorithmic}
\end{algorithm}
Note that the choice of $\beta$ in the above two algorithms has to be small to achieve a 
good approximation. On the other hand, it can not be too small and is theoretically 
at least of order $\sqrt{t}$ (see Theorem $3.3$~\cite{SS14}) for ${\bf u}$ 
computed by the algorithm PoissonDirichlet to converge. 

In all the above algorithms, there is a parameter $t$, whose choice depends on the input data, 
in particular, the density of $P$ and $S$. In Section~\ref{sec:examples}, we will show 
how to empirically choose $t$ 
to achieve the best accuracy. For the choice of $t$ with theoretically guaranteed convergence, 
the readers are referred to~\cite{SS14}. 

Finally, we write down the ALM iterative algorithm for solving the Dirichlet problem 
given in~\eqref{eq-dirichlet}. Recall in this method, the Dirichlet problem is modeled 
as a constrained optimization problem and is solved by an ALM iterative procedure where 
each iteration consists of solving a Robin problem as given in~\eqref{eq-robin-ALM}. 
See the algorithm ALMDirichlet. 
The purpose of the ALM iteration is to alleviate the requirement for $\beta$ being small, 
which is demonstrated empirically in Section~\ref{sec:examples}. We remark that we have not
been able to show this ALM iterative procedure converge theoretically, although we have provided
some evidence to support its convergence in Section~\ref{convergence-alm}. 

\begin{algorithm}[h!]
\floatname{algorithm}{Algorithm}
\caption{ { ALMDirichlet($P, S, \bV, \bA, {\bf f}, {\bf g}, t, \beta$)} }
\begin{algorithmic}[1]
	\STATE Compute the matrices $\mathcal{L}, \mathcal{I}, \mathcal{B}$. 
	\STATE Set ${\bf b}_0 = \mathcal{I}{\bf f}$. 
	\STATE Set $K = \mathcal{L}$ and modify $K(P, S) = K(P, S) + \frac{2}{\beta} \mathcal{B}$. 
	\STATE Set $\bw = 0$. 
	\REPEAT 
	\STATE Set ${\bf b} = {\bf b}_0 + 2\mathcal{B}(\frac{1}{\beta}{\bf g} + \by)$.
	\STATE Solve the linear system $K {\bf u} = {\bf b}$ and obtain ${\bf u} = (u_1, \cdots, u_n)$.
	\STATE Modify $\bw=\bw + \frac{1}{\beta} ({\bf g} - {\bf u}(S))$.
	\UNTIL { $\| {\bf g} - {\bf u}(S)\| $ = 0 }.
	\STATE Output ${\bf u}$. 
\end{algorithmic}
\end{algorithm}

\section{Volume weight estimations}
\label{sec:weight}
In this section, for the sake of completeness, we give a brief description of the approach proposed in~\cite{LuoSW09} 
to estimate the volume weight vector $\bV$ from the point sets $P$. Using the same approach, the weight vector $\bA$ can 
be estimated from $S$. 
The basic idea is to construct a local patch around a sample point, from which the weight of that point is computed. 
The detailed algorithm is described in Algorithm $7$ EstimateWeights. 
\begin{algorithm}[h!]
\floatname{algorithm}{Algorithm}
\caption{ { EstimateWeights($P, k, n$)} }
\begin{algorithmic}[1]
	\FOR {each point $p\in P$ }
	\STATE Find the $n$-nearest neighbors of $p$ in $P$, denoted $N_p$.  
	\STATE Set $\delta = \frac{1}{|N_p|} \sum_{q\in N_p} \|p-q\|$ and $N_{\delta} = \{q\in P | \|p-q\| < \delta \}$ be the 
	points in $P$ within $\delta$ distance to $p$.
	\STATE Estimate the tangent space at $p$ by a $k$-dimensional subspace $\tilde{T}_p$ estimated from $N_{\delta}$ using weighted least squares.  
	\STATE Project the points in $N_{\delta}$ into $\tilde{T}_p$ and denote them by $\tilde{N}_{\delta}$
	\STATE Compute the Voronoi diagram of $\tilde{N}_{\delta}$ on $\tilde{T}_p$. 
	\STATE The volume weight $V_p$ is estimated as the volume of the Voronoi cell.
	\ENDFOR
\end{algorithmic}
\end{algorithm}

Theoretically, if $\delta$ in Algorithm $7$ is fixed to be a fraction of the reach of $\M$, 
then we have the following theorem which guarantees that the integral of any Lipschitz function 
on $\M$ can be well approximated using the volume weights $V_p$ estimated by Algorithm $7$. 
A sampling $P$ of $\M$ is an $(\e, \eta)$-sampling if for any point $x\in M$, there is a point $p\in P$, 
so that $|x-p|<\e$ and for any two different sample points $p, q\in P$, $|p-q|>\eta$. 
\begin{theorem}[\cite{LuoSW09}]
Given an $(\e, \eta)$-sampling $P$ of $\M$ with $\e$ sufficiently small,
compute the volume weight $V_p$ for each $p\in P$ using Algorithm $7$. Then for any Lipschitz function
$f$  we have that
$$
\left| \int_\M f - \sum_{p \in P} V_p f(p) \right| =~O(\e + \e^3 / \eta^2),
$$
implying that for $\eta = \Omega(\e^{3/2 - \xi})$ with any positive constant $\xi$, we have
$$
\lim_{\e \rightarrow 0} \left| \int_\M f - \sum_{p \in P} V_p f(p) \right| =~ 0.
$$
\end{theorem}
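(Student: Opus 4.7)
The plan is to compare the discrete sum $\sum_{p\in P} V_p f(p)$ against the integral $\int_\M f \mathd \mu$ by introducing an intrinsic partition $\{U_p\}_{p\in P}$ of $\M$, where $U_p$ consists of the points of $\M$ closest to $p$ in the ambient distance of $\R^d$. This gives the decomposition
\begin{equation*}
 \int_\M f \mathd \mu - \sum_p V_p f(p) = \sum_p \int_{U_p} (f-f(p))\mathd \mu + \sum_p (|U_p| - V_p)\, f(p),
\end{equation*}
so it suffices to bound the two sums separately: a Lipschitz error and a volume-comparison error.

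The Lipschitz error is handled quickly. Since $P$ is $\e$-dense on $\M$, each $U_p$ is contained in the ambient ball $B(p, \e)$ and thus has diameter $O(\e)$; Lipschitz continuity of $f$ gives $\bigl|\int_{U_p}(f-f(p))\mathd \mu\bigr| \le \mathrm{Lip}(f)\cdot \e \cdot |U_p|$, and summing over $p$ yields $O(\e) \cdot |\M|$. This accounts for the first $O(\e)$ term of the claimed bound.

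The harder half, and where I expect the main obstacle to lie, is the volume-comparison sum. The quantity $V_p$ is the area of the Voronoi cell of $p$ on the \emph{estimated} tangent plane $\tilde{T}_p$ among the \emph{projected} neighbors, rather than the intrinsic area $|U_p|$. I would rely on three standard reach-based estimates. First, the weighted least squares fit yields $\mathrm{angle}(\tilde T_p, T_p) = O(\e)$, using that $\M$ has bounded reach $\tau$ so that the data $N_\delta$ is tightly concentrated near $T_p$. Second, $\M$ is locally a graph over $T_p$ with vertical deviation $O(r^2/\tau)$ at distance $r$, so projecting $U_p$ onto $\tilde T_p$ distorts area by a relative factor $1 + O(\e^2/\tau^2)$. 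Third, the Voronoi bisectors among the projected neighbors are shifted by $O(\e^2/\tau)$ relative to the intrinsic ambient bisectors. Combining these with the $\eta$-separation packing bound (which yields $O(|\M|/\eta^k)$ cells in total) and a careful accounting of total bisector length per cell produces the stated $O(\e^3/\eta^2)$ contribution. The technical core is managing the second and third estimates simultaneously without picking up additional factors of $\e/\eta$ from the discrete combinatorics of the Voronoi diagram.

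Finally, the limit statement is immediate from the bound: substituting $\eta = \Omega(\e^{3/2-\xi})$ for any $\xi > 0$ makes $\e^3/\eta^2 = O(\e^{2\xi}) \to 0$ as $\e \to 0$, while $\e \to 0$ trivially, so the total error vanishes.
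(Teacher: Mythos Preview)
The paper does not supply a proof of this theorem: it is quoted verbatim as a result of \cite{LuoSW09}, with no argument given beyond the citation. There is therefore nothing in this paper against which to compare your proposal.

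That said, your outline is a sensible reconstruction of how such a bound is typically obtained. The decomposition into a Lipschitz term and a volume-comparison term is the natural one, and your handling of the first term is complete. For the second term, your list of ingredients (tangent-plane estimation error, graph-type area distortion, bisector perturbation, $\eta$-packing count) is the right set of tools, and the final limit argument is correct once the bound is in hand. The step you flag as the ``technical core'' is indeed where the work lies: converting per-cell bisector shifts of order $O(\e^2)$ into a global volume discrepancy of order $O(\e^3/\eta^2)$ requires controlling both the number of cells and the total perimeter contribution, and your sketch stops short of that computation. If you want to verify the exponent, you should consult the original argument in \cite{LuoSW09} rather than the present paper.
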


The reach of $\M$ is usually unknown. In practice, $\delta$ is estimated using the average distance to 
the $n$-nearest neighbors as described in Algorithm $7$, which works well. If $\M$ has boundary, 
for a point $p$ near to the boundary, we take as the volume weight $V_p$ the volume of the Voronoi cell
which is inside the Convex hull of $\tilde{N}_{\delta}$. 

\section{Numerical Results}
\label{sec:examples}
In this section, we run our point integral method on several examples, mainly to explain the choice of 
the parameters in our algorithm, to demonstrate the convergence of our algorithm and to compare 
its performance with that of finite element method. The approximation error is computed in $L_2$: 
$err = \|u - u_{gt}\|/\|u_{gt}\|$ where the $L_2$ norm is evaluated 
as $\|f\| = \sqrt{\sum_{\bfp_i\in P} f^2_iV_i}$ for a function $f$ over $\M$
and $\|f\| = \sqrt{\sum_{\bfs_i\in S} f^2_iA_i}$ for a function $f$ over $\p\M$.

\subsection{Unit Disk}
Test our algorithm on unit disk. We discretize unit disk using a Delaunay mesh with 
$684$ vertices shown in Figure~\ref{fig:unit_disk}(a). This mesh is generated using
Triangle~\cite{shewchuk96b}. We obtain a sequence of refined meshes with
$2610$, $10191$ and $40269$ vertices by subdividing it once, twice and three times. 
In each subdivision, a triangle in the mesh is split into four smaller ones using the 
midpoints of the edges. Note that  the mesh size is reduced by half but 
the number of vertices roughly get quadrupled for each subdivision. 
Figure~\ref{fig:unit_disk}(b) shows the mesh after one subdivision.  
For point integral method, we remove the mesh topology and only retain the vertices as 
the input point set $P$. Those vertices on the boundary of the mesh are taken as the 
input point set $S$.

\begin{figure}[!t]
\begin{center}
\begin{tabular}{cc}
\includegraphics[width=0.45\textwidth]{./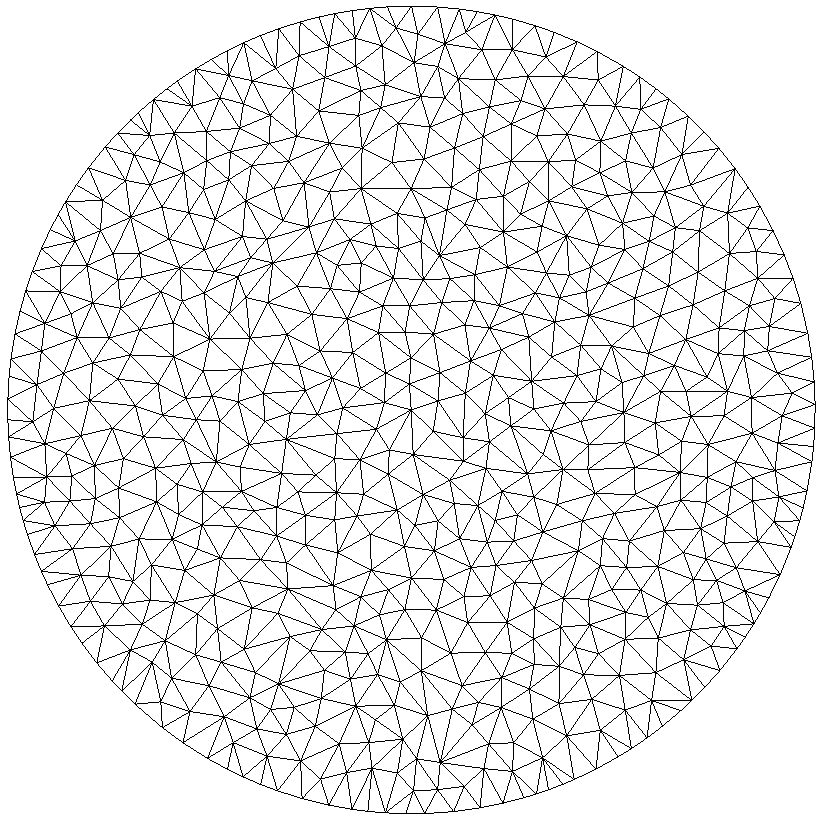} & \includegraphics[width=0.45\textwidth]{./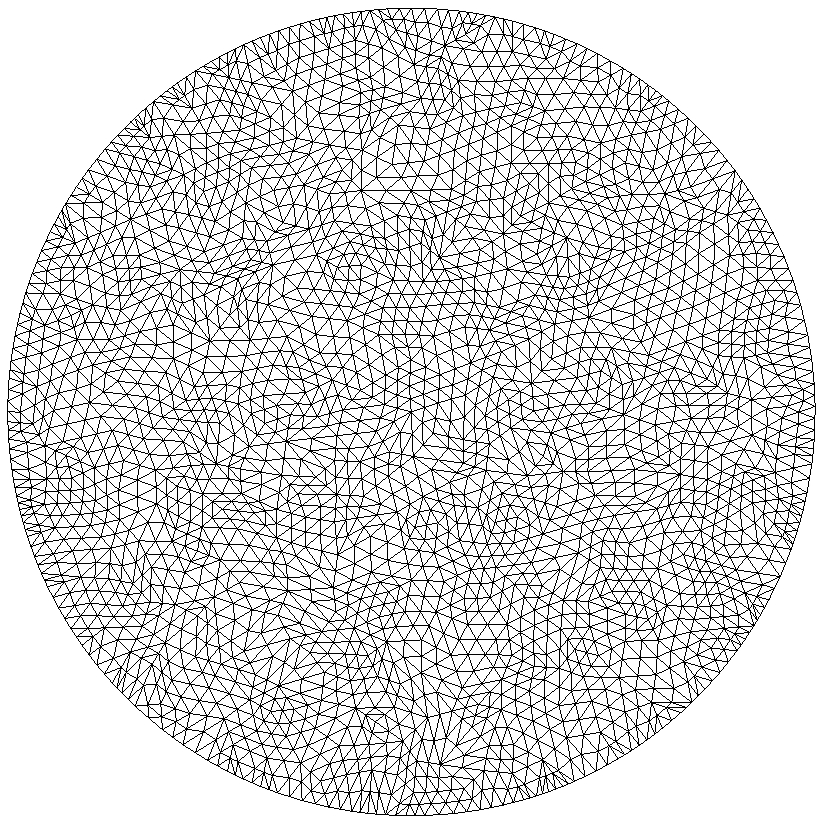} \\
(a) & (b)
\end{tabular}
\end{center}
\vspace{-4mm}
\caption{Discretization of unit disk. (a) A triangle mesh of unit disk with $684$ vertices. (b) The mesh
with $2610$ vertices obtained by subdividing the triangle mesh in (a) where each triangle is subdivided into four 
using the midpoints of the edges. }
\label{fig:unit_disk}
\end{figure}

\vspace{0.1in}
\noindent{\bf Choice of Parameters:~}
Our algorithm has two parameters $t$ and $\beta$. Here we show how the approximation error changes with 
different choices of $t$ and $\beta$.  
Set the boundary condition (both Neumann and Dirichlet) as that of the function $u_{gt} = \cos 2 \pi r$ with
$r = \sqrt{x^2 + y^2}$ and see how accurate our algorithm can recover this function.

Figure~\ref{fig:unit_disk_param_t} shows the plot of the approximation error $\|u-u_{gt}\| / \|u_{gt}\|$
as a function of the parameter $\sqrt{t}$. The approximating solution $u$ is computed
by Algorithm $2$ for the Neumann boundary and by Algorithm $6$ for the Dirichlet boundary. 
In Algorithm $6$, set $\beta = 1$ and the solution is obtained after $100$ iterations.
Given a sampling $P$ on $\M$, let $\delta_i$ be the average distance 
from $p_i\in P$ to its $10$ nearest neighbors in $P$ and $\delta$ is the average of $\delta_i$ 
over all points $p_i\in P$. 
We observe, from the plots in Figure~\ref{fig:unit_disk_param_t},  that the 
optimal parameter $\sqrt{t}$ which produces the smallest approximation error remains 
$0.5\delta$ for the Neumann boundary and $0.75\delta$ for the Dirichlet boundary across the above 
sequence of refined samplings. 
This means only a fixed number of samples are empirically needed in the neighborhood of size $\sqrt{t}$
for PIM to converge. Such choice of parameter $t$ leads to a better empirical convergence rate
than what is predicted in~\cite{SS14}.
The theoretical analysis for PIM in the paper~\cite{SS14} shows that
the convergence of PIM requires more and more samples in the neighborhood of size $\sqrt{t}$
as $t$ decreases, and in fact requires infinitely many in the limit of $t$ going to $0$.
As we will see below, PIM empirically 
converges at least linearly in mesh size, while our analysis in~\cite{SS14} shows that the convergence rate 
is one fifth root of mesh size. This phenomenon is also observed on 3D domain, as we will show
in Section~\ref{unit-ball}. This suggests that there may be rooms
to improve our analysis on the convergence rate. 


\begin{figure}[!t]
\begin{center}
\begin{tabular}{cc}
\includegraphics[width=0.48\textwidth]{./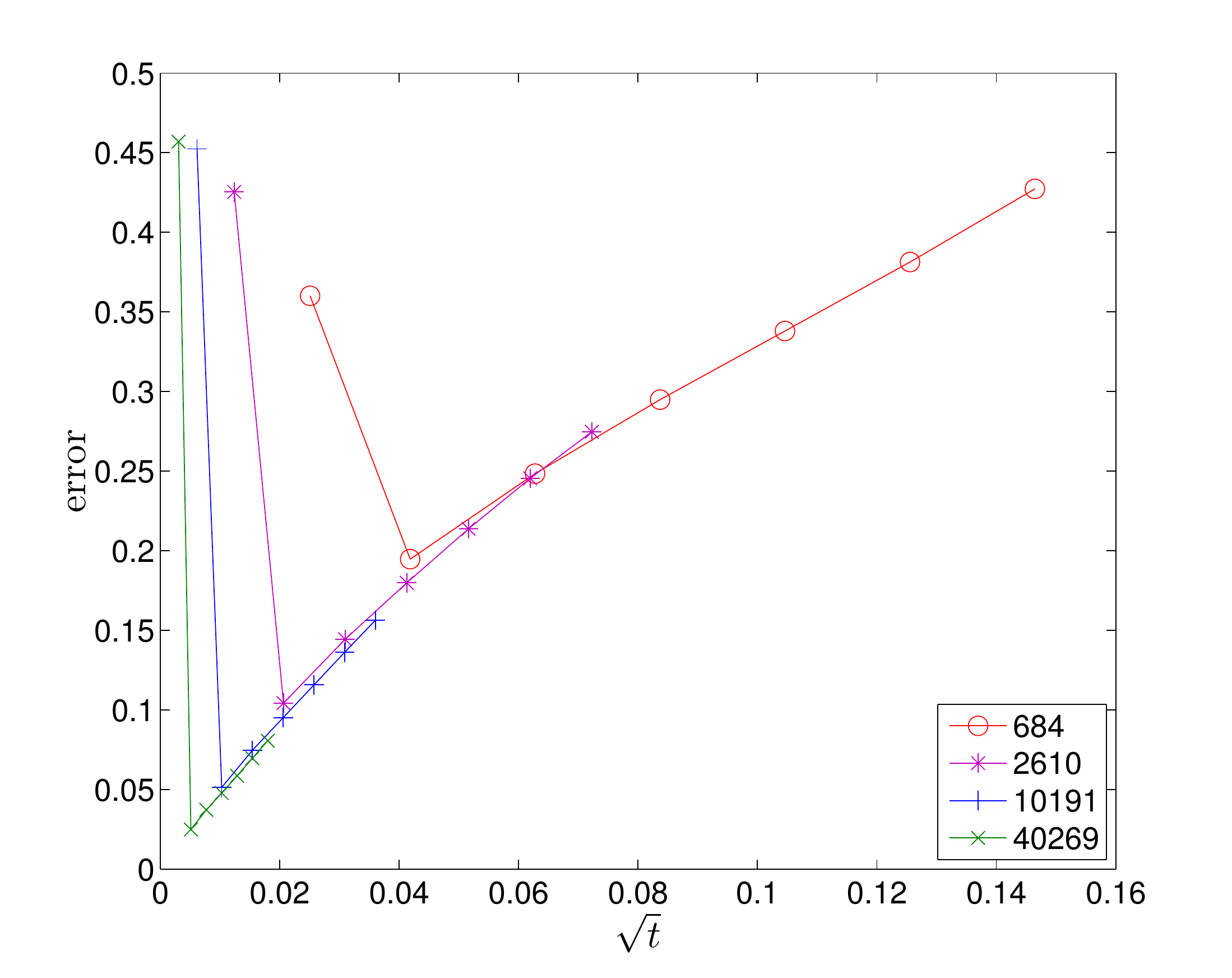}
&
\includegraphics[width=0.48\textwidth]{./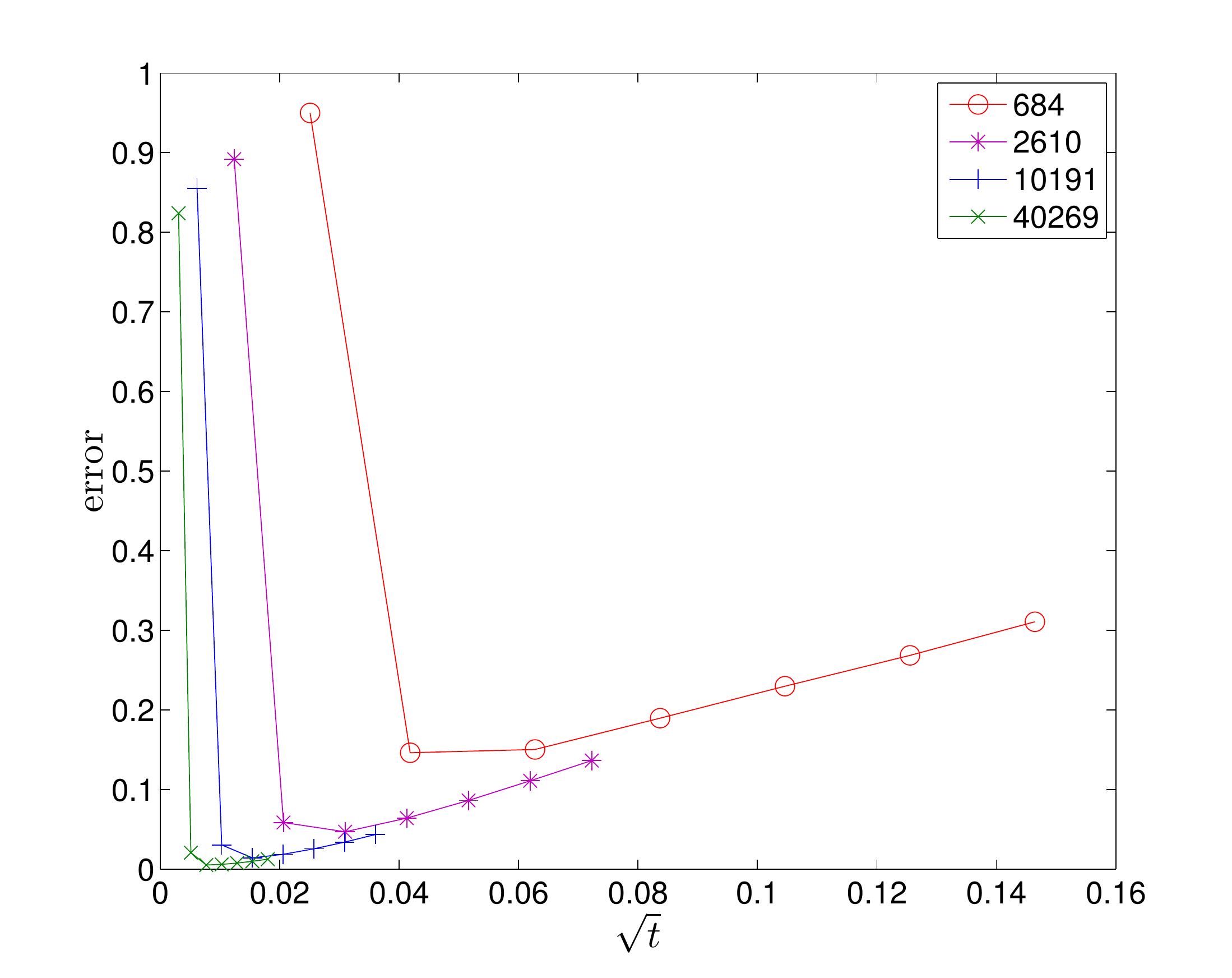}
\\
(a) & (b)
\end{tabular}
\end{center}
\vspace{-4mm}
\caption{Approximation error vs. the parameter $t$ on unit disk: (a) Neumann boundary; (b) Dirichlet boundary}
\label{fig:unit_disk_param_t}
\end{figure}


To see the choice of the parameter $\beta$, we fix the parameter $\sqrt{t} = 0.75\delta$.
we first show how the choice of $\beta$ affects Algorithm $4$. 
Figure~\ref{fig:unit_disk_param_beta_penalty} shows the approximation errors for the solution
computed by Algorithm $4$ using different $\beta$ over the above sequence of refined samplings.  
As we can see, the effect of $\beta$ is similar across different samplings: the approximation errors remain small 
for $\beta$ in the interval $[10^{-6}, 10^{-3}]$ but increases significantly as $\beta$ increases from $10^{-3}$
or decreases from $10^{-6}$. 
This phenomenon fits the theory of PIM~\cite{SS14} well: The smaller the $\beta$ is, 
the smaller the approximation error is; and on the other hand, if $\beta$ is chosen too small, 
the linear system becomes numerically unstable and the approximation error increases.
For a technical reason, our analysis in~\cite{SS14} also requires that $\beta$ and $\sqrt{t}$ are of the same order.
However, it seems not necessary in our experiments, which means we may improve the analysis to 
remove this extra requirement. In the following experiments, we fix $\beta = 10^{-4}$ in Algorithm $4$. 

\begin{figure}[!h]
\begin{center}
\begin{tabular}{c}
\includegraphics[width=0.6\textwidth]{./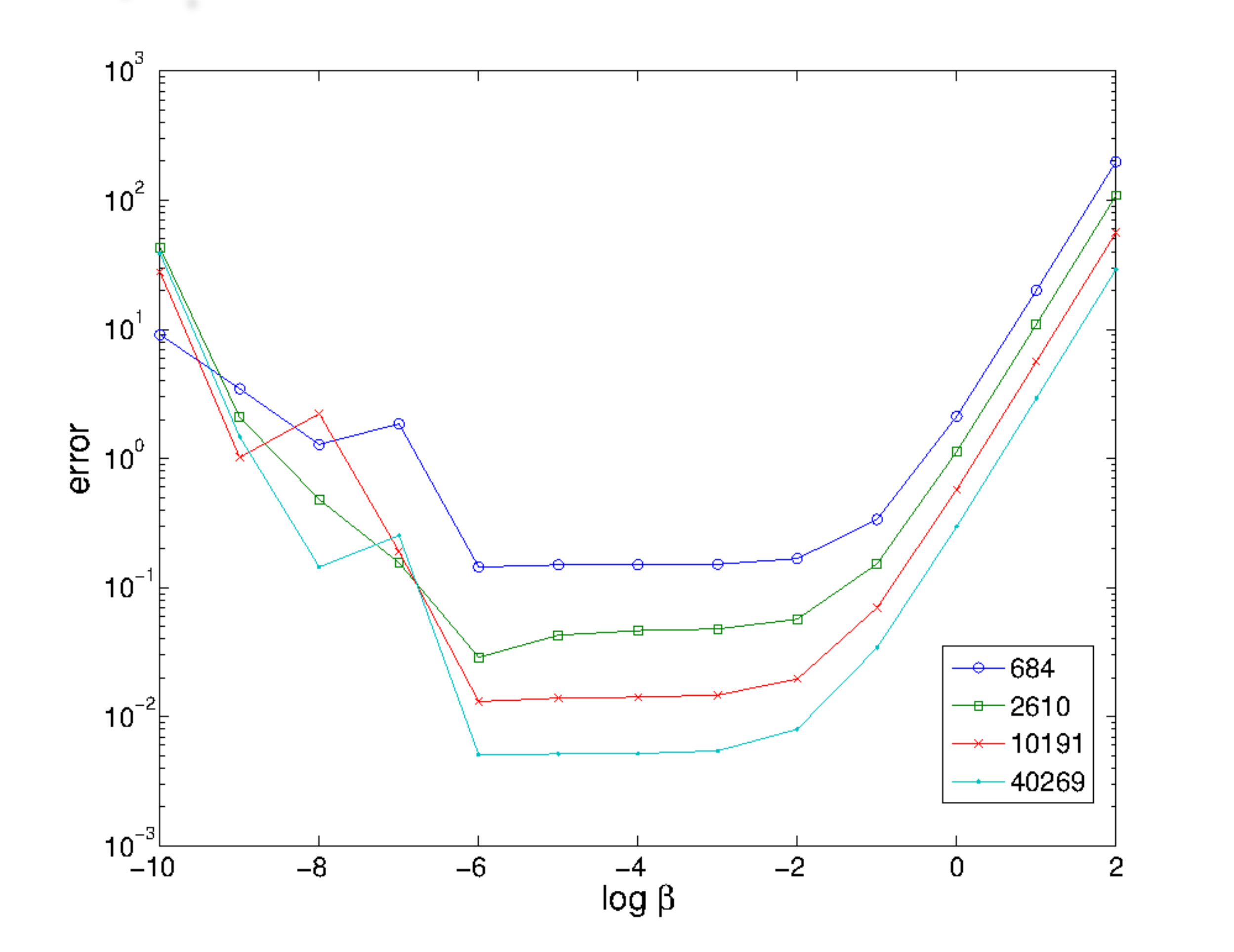}
\end{tabular}
\end{center}
\vspace{-4mm}
\caption{Approximation error vs. parameter $\beta$ by Algorithm $4$ on unit disk.}
\label{fig:unit_disk_param_beta_penalty}
\end{figure}

Next we show how the choice of $\beta$ affects Algorithm $6$, which employs the approach of 
augmented Lagrangian multiplier. 
We run the algorithm over the sampling with $2610$ points. Recall when the ALM iteration converges, 
the obtained solution should satisfy the specified boundary 
condition.  
Assume $v^k$ is the solution obtained after $k$th iteration. 
Figure~\ref{fig:unit_disk_param_beta}(a) shows the approximation error $\|v^k|_{\p\M} - g\|/\|g\|$ 
on the boundary. As we can see, the smaller the parameter $\beta$ is, the faster the solution $v^k$ 
converges on the boundary. However the algorithm diverges if $\beta$ is too small (less than $5\times 10^{-6}$). 
Nevertheless, the solution converges on the boundary over a large range of $\beta$. 
Figure~\ref{fig:unit_disk_param_beta}(b) shows the approximation errors $\|u - u_{gt}\|/\|u_{gt}\|$ 
after $100$ iterations.  As we can see, although the algorithm converges at the different speeds for 
the different $\beta$, the difference in the final approximation errors is small across the different 
but reasonable choices of $\beta$. Thus Algorithm 6 which employs ALM iteration is not sensitive 
to the choice of $\beta$ and works over a large range of $\beta$. 

\begin{figure}[!t]
\begin{center}
\begin{tabular}{cc}
\includegraphics[width=0.48\textwidth]{./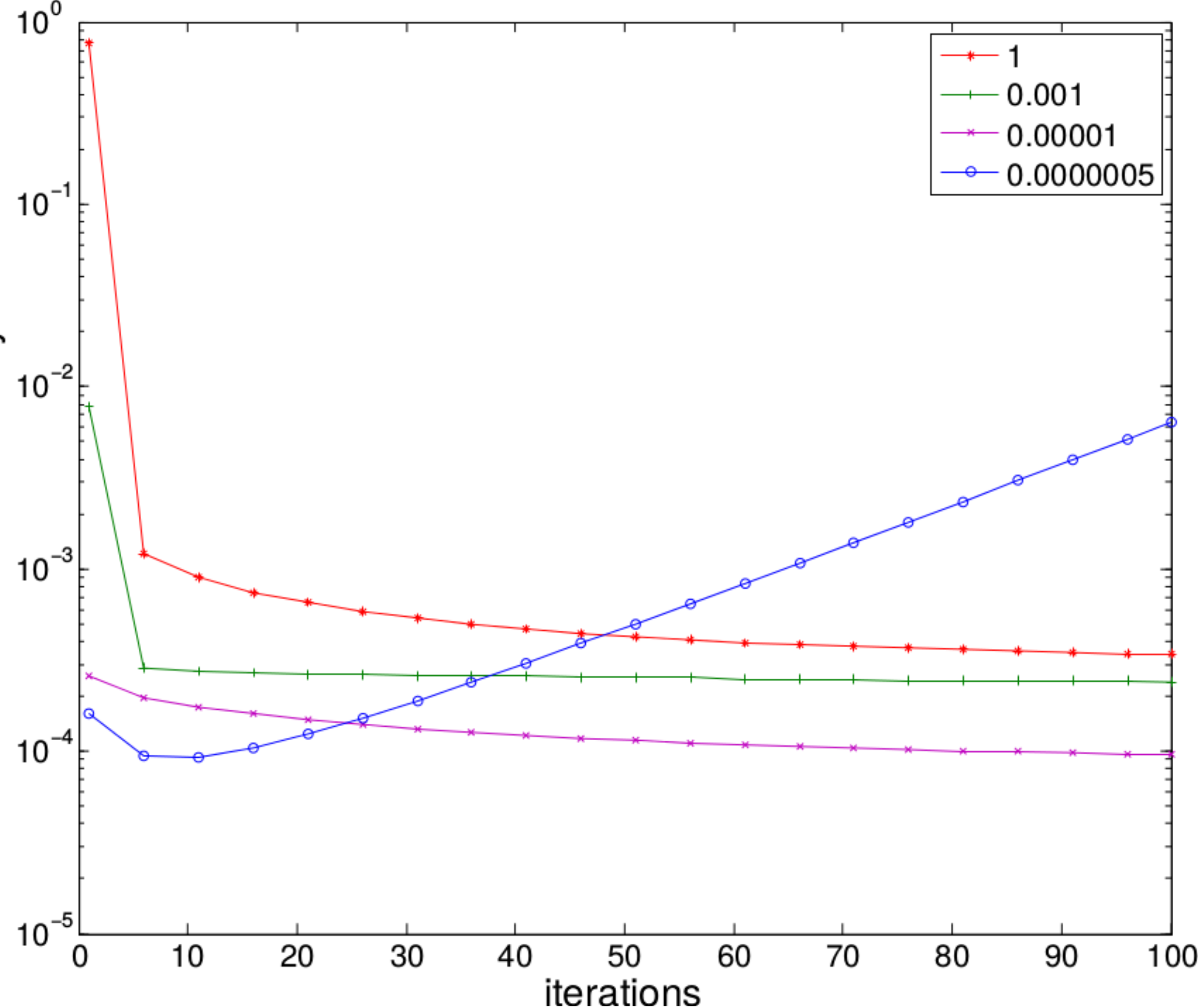}
&
\includegraphics[width=0.48\textwidth]{./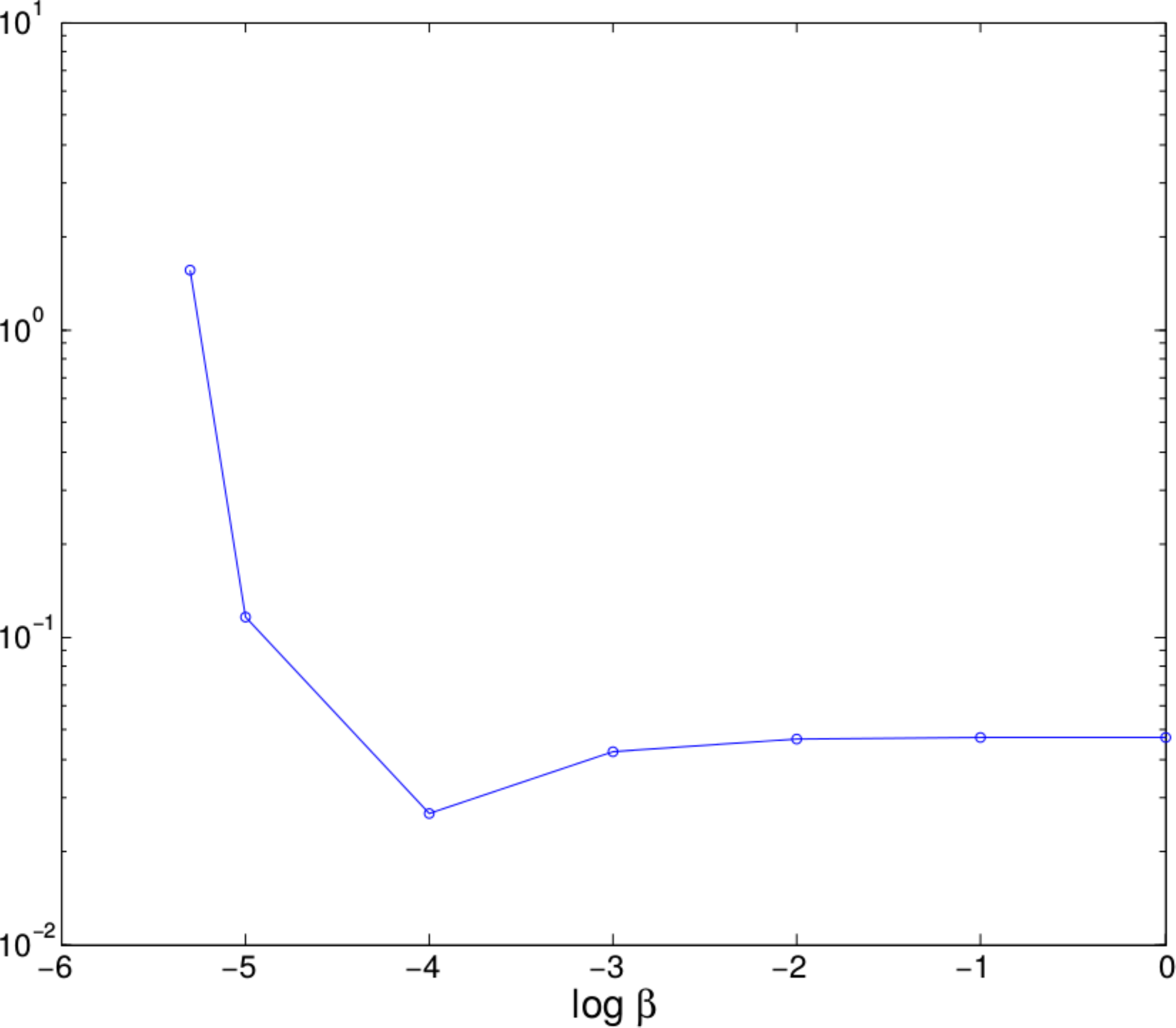}
\\
(a) & (b)
\end{tabular}
\end{center}
\vspace{-4mm}
\caption{Choice of parameter $\beta$: (a) Convergence of $v^k$ on the boundary under different $\beta$; (b) Approximation error vs. parameter $\beta$. }
\label{fig:unit_disk_param_beta}
\end{figure}

\vspace{0.1in}
\noindent {\bf Convergence for the Poisson Equation:~}
We fix $\sqrt t= 0.75\delta$ and $\beta = 10^{-4}$. We show the convergence of Algorithm 2 and Algorithm 4
for the Neumann boundary and the Dirichlet boundary respectively, and also compare them to the results of FEM. 
In FEM, we use linear elements. 
Table~\ref{tbl:unit_disk_le} shows the approximation
error for recovering the function  $\cos 2 \pi r$ over a sequence of refined meshes or samplings. 
As we can see, FEM has the quadratic convergence
rate for both the Neumann boundary and the Dirichlet boundary, which coincides with the theory of FEM. 
PIM converges in the linear order $h$ for the Neumann boundary  and 
in the order $h^{3/2}$ for the Dirichlet boundary, 
where $h$ is referred to mesh size. This convergence rate is much 
faster than the order $h^{1/5}$  predicted by our analysis of PIM in~\cite{SS14}. 

\begin{table}[!ht]
\begin{center}
\begin{tabular}{| c| c | c | c | c |}
\hline
$|V|$	& 684 	&  2610 & 10191 & 40296 \\ 
\hline
\multicolumn{5}{|c|}{Neumann Boundary}\\
\hline
FEM     & 0.0212 & 0.0056 & 0.0014 & 0.00036\\
\hline
PIM       & 0.1947 & 0.1043 & 0.0513 & 0.0249\\
\hline
\multicolumn{5}{|c|}{Dirichlet Boundary}\\
\hline
FEM     & 0.0310 & 0.0079 & 0.0020 & 0.0005\\
\hline
PIM      &  0.1500 &   0.0428  &  0.0140  &  0.0052\\
\hline
\end{tabular}
\end{center}
\caption{Convergence for recovering the function $\cos 2\pi  r$. 
The solution is computed using Algorithm 2 for Neumann 
boundary and Algorithm 4 for Dirichlet boundary.
\label{tbl:unit_disk_le}}
\end{table}

\begin{figure}[!h]
\begin{center}
\begin{tabular}{c}
\includegraphics[width=0.45\textwidth]{./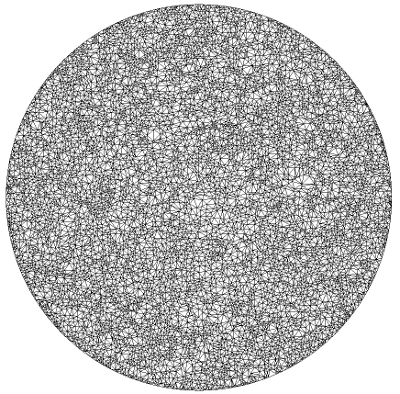} 
\end{tabular}
\end{center}
\vspace{-4mm}
\caption{A triangle mesh of unit disk. }
\label{fig:unit_disk_10000}
\end{figure}

However, although FEM has a better convergence rate, it is sensitive to the quality of the mesh. 
Figure~\ref{fig:unit_disk_10000} shows a Delaunay triangle mesh with $10000$ vertices randomly sampled
on unit disk. The condition number of the stiff matrix of FEM reaches $10^{20}$. 
Table~\ref{tbl:unit_disk_bad_triangle} shows the approximation errors for recovering the function $\cos 2\pi r$ 
and the function $x^2-y^2$. As we can see, FEM is not stable and may produce
a solution with no accuracy. However, PIM always produces a solution with reasonable accuracy.

\begin{table}[!h]
\begin{center}
\begin{tabular}{| c| c | c |}
\hline
		& Neumann Boundary    & Dirichlet Boundary \\
\hline
FEM  & 0.0026 &  2.0218\\
\hline
PIM &  0.0600 & 0.0673 \\
\hline
\multicolumn{3}{|c|}{$\cos 2\pi r$}\\
\hline
FEM  & 1.2003 & 5.0321 \\
\hline
PIM &  0.0610 & 0.0081 \\
\hline
\multicolumn{3}{|c|}{$x^2 - y^2$}\\
\hline
\end{tabular}
\end{center}
\caption{The approximation errors of FEM and PIM in solving the Poisson Equations over the mesh shown in Figure~\ref{fig:unit_disk_10000}
\label{tbl:unit_disk_bad_triangle}}
\end{table}


\begin{figure}[!h]
\begin{center}
\begin{tabular}{cc}
\includegraphics[width=0.45\textwidth]{./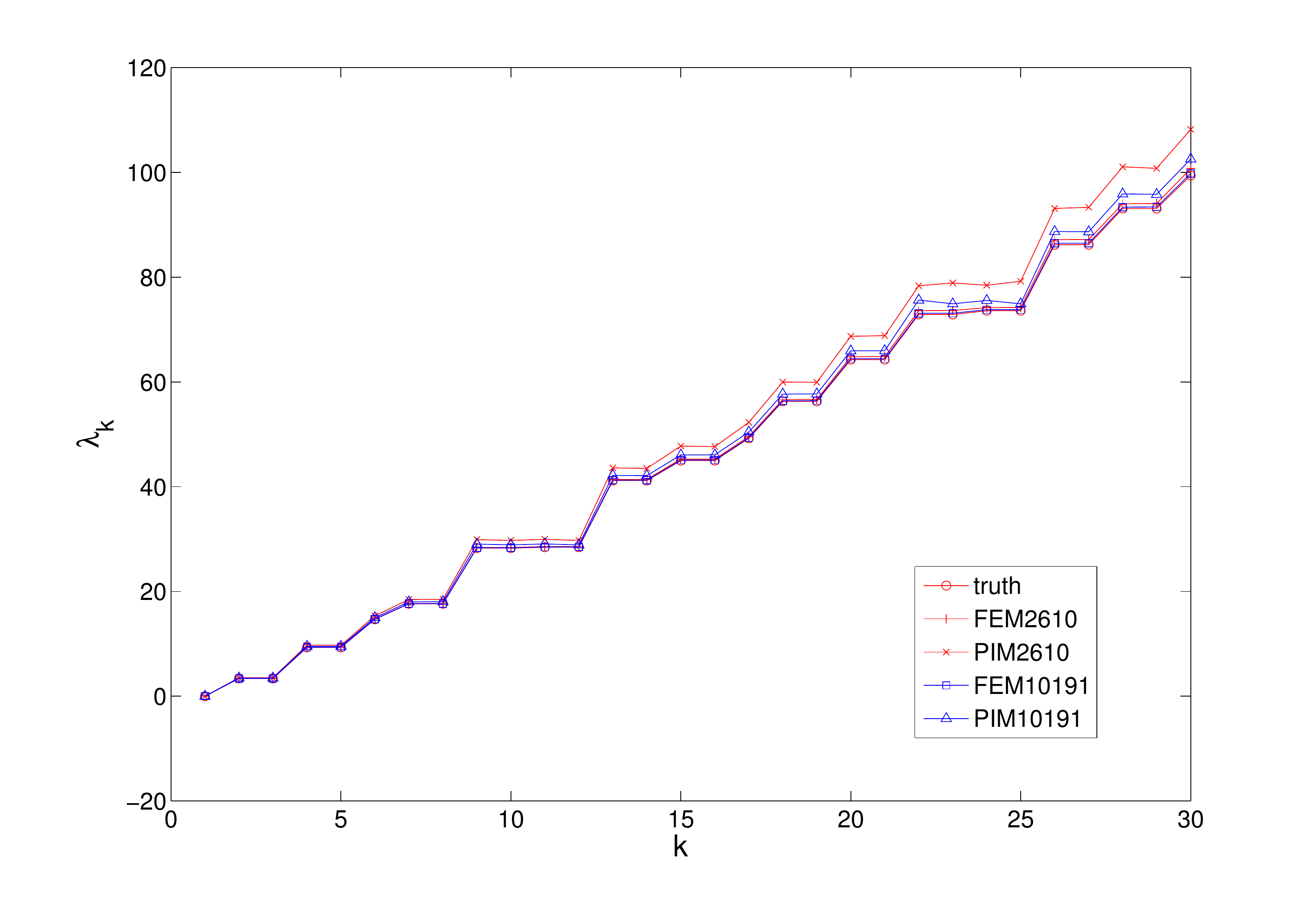}&
\includegraphics[width=0.45\textwidth]{./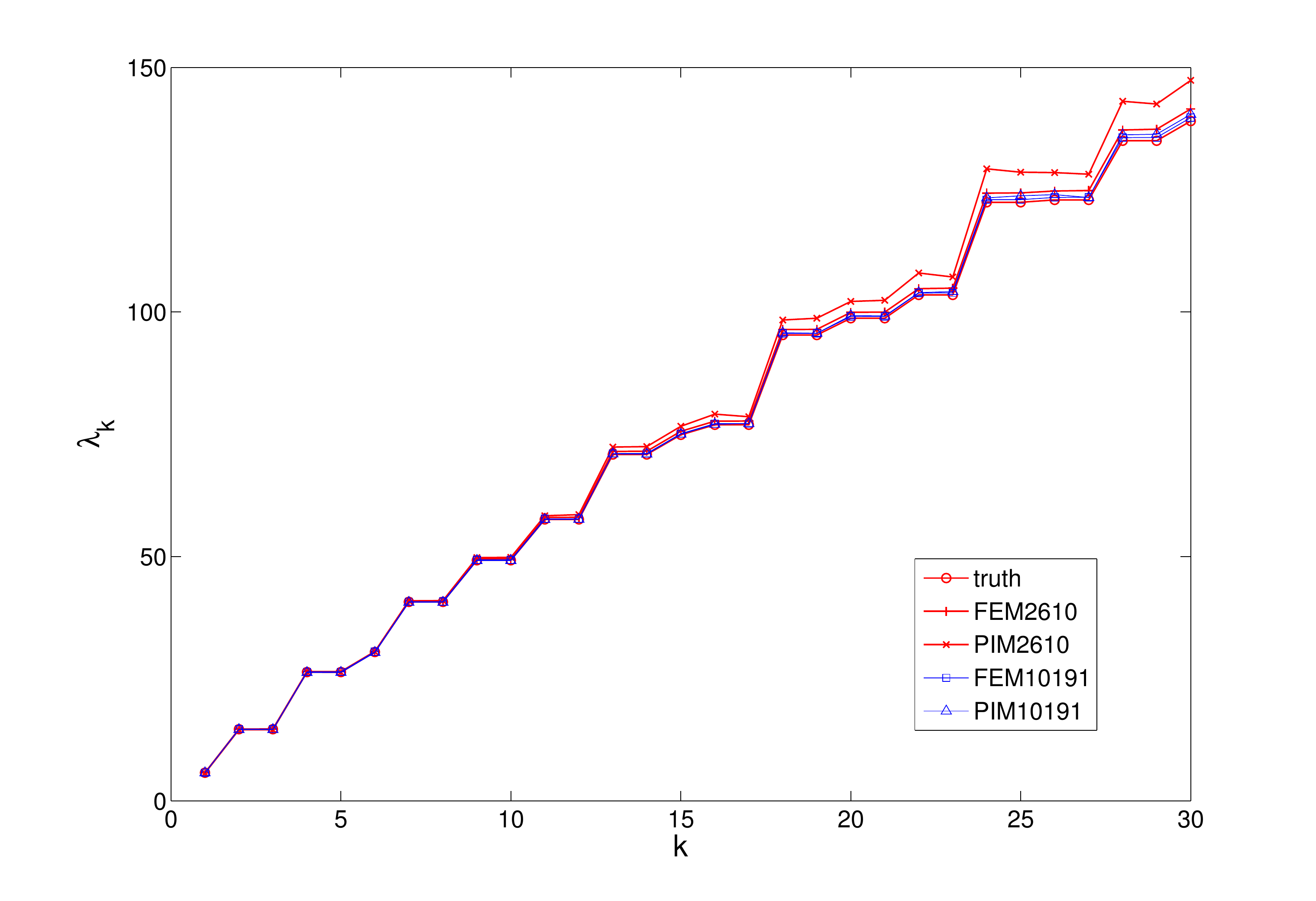}\\
(a)&(b)
\end{tabular}
\end{center}
\vspace{-4mm}
\caption{The eigenvalues of unit disk estimated by FEM and PIM over the meshes or the samplings with
2610 points and 10191 points. 
(a) Neumann eigenvalues; (b) Dirichlet eigenvalues}
\label{fig:unit_disk_eig}
\end{figure}

\begin{figure}[!h]
\begin{center}
\begin{tabular}{cc}
\includegraphics[width=0.45\textwidth]{./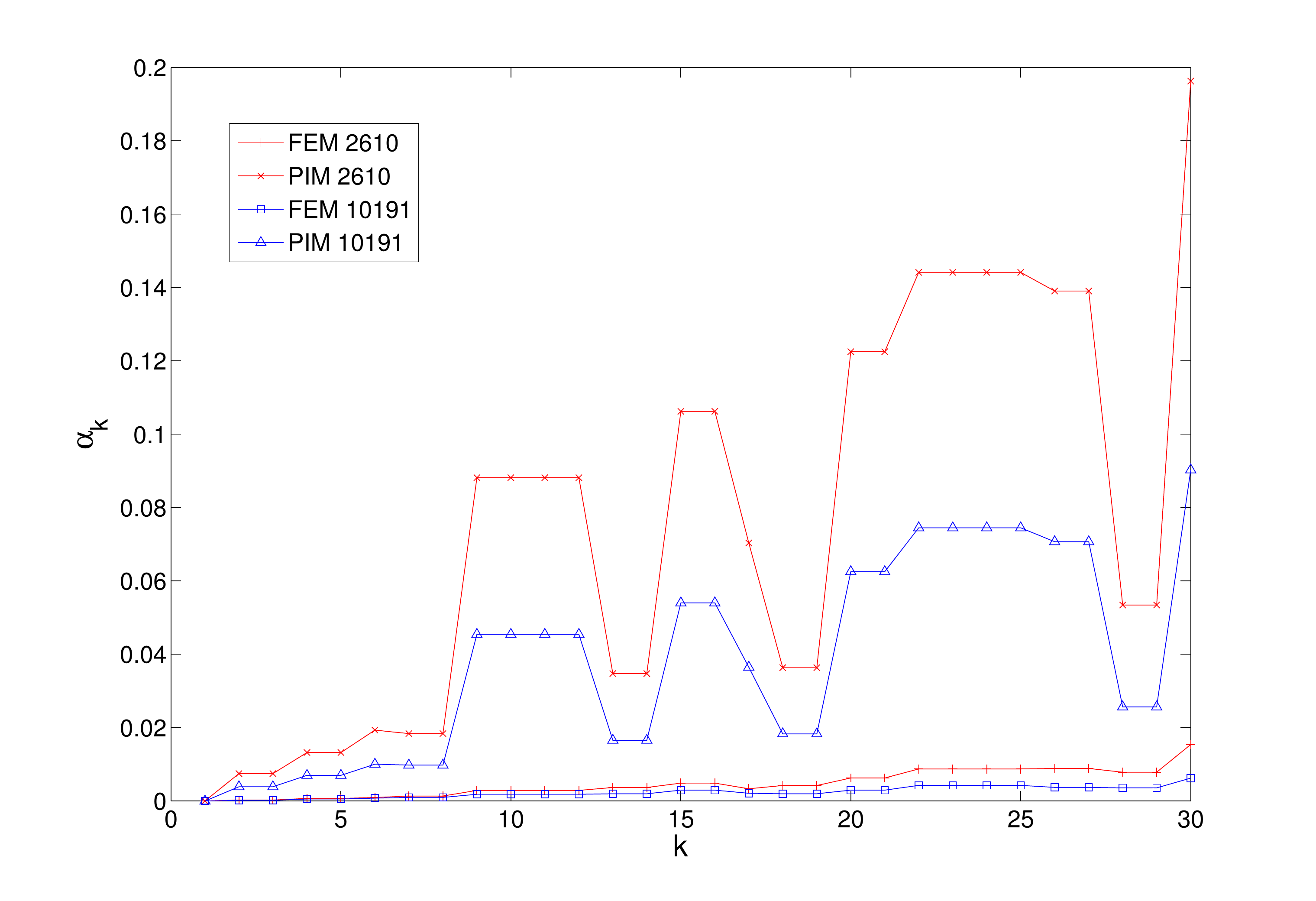}&
\includegraphics[width=0.45\textwidth]{./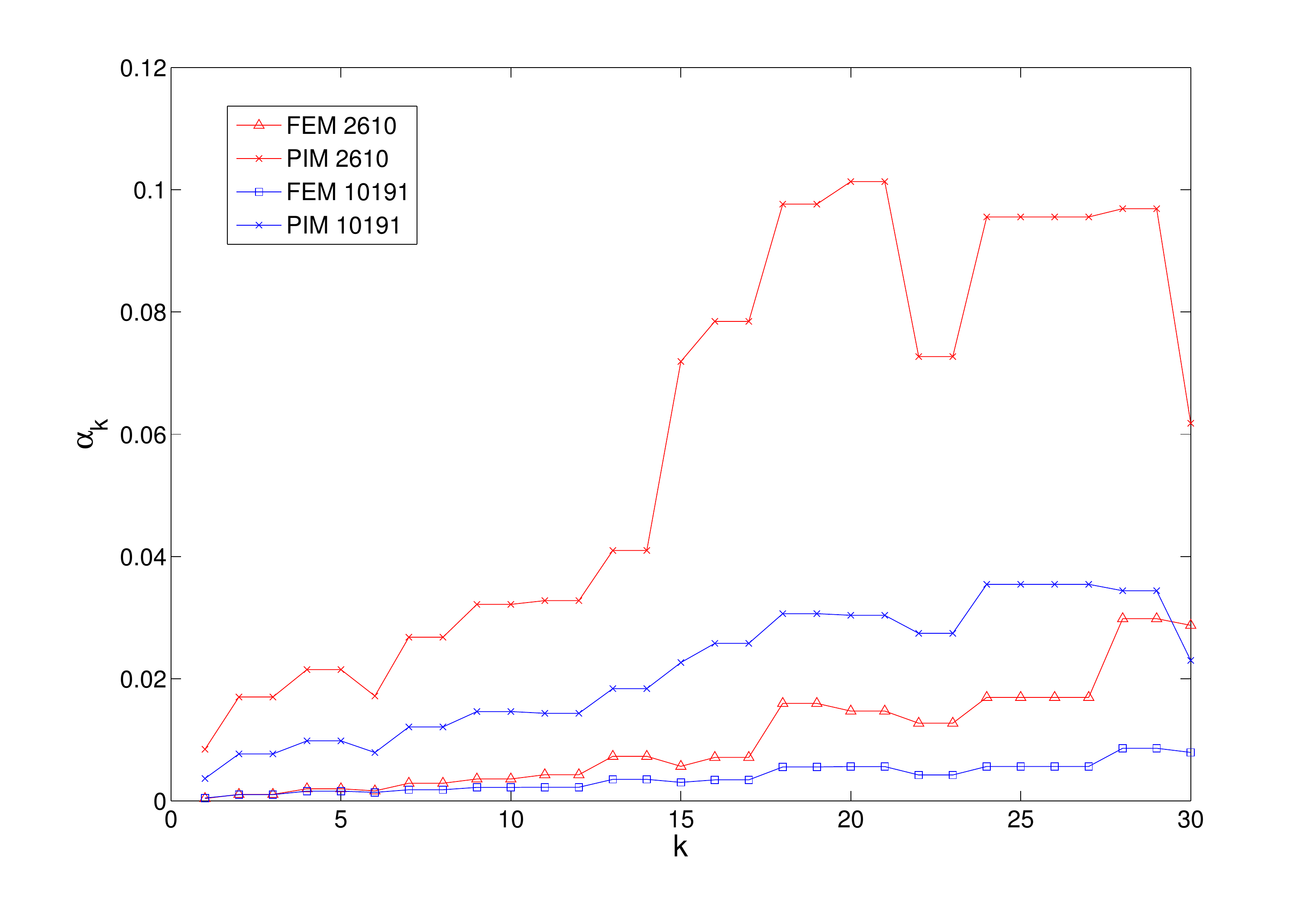}\\
(a)&(b)
\end{tabular}
\end{center}
\vspace{-4mm}
\caption{The approximation errors of the eigenfunctions of unit disk estimated using FEM (PIM) over the meshes (samplings) with 
2610 points and 10191 points. (a) Neumann boundary; (b) Dirichlet boundary.}
\label{fig:unit_disk_eigenfunction}
\end{figure}

\vspace{0.1in}
\noindent{\bf Eigensystem:~} 
We compute the eigensystem of Laplacian using Algorithm 3 for the problem~\eqref{eigen-neumann} with 
the homogeneous Neumann boundary 
and Algorithm 5 for the problem~\eqref{eigen-dirichlet} with the homogeneous Dirichlet boundary. 
Again we fix $\sqrt t=0.75\delta$ and $\beta = 10^{-4}$.

Figure~\ref{fig:unit_disk_eig} shows the first $30$ eigenvalues computed using PIM (FEM) over 
the sampling (mesh) with 2610 points and 10191 points. Both methods give a good estimation for the eigenvalues. 
Figure~\ref{fig:unit_disk_eigenfunction} shows the approximation error of the first 30 eigenfunctions, 
where the approximation error is computed as the angle between the eigenspaces of ground 
truth and the eigenspaces estimated by PIM or FEM. Let $U$ and $V$ be the two subspaces in $\mathbb{R}^{n}$. The angle between
$U$ and $V$ is defined as 
\begin{equation}
\label{eqn:erroreigenfunction}
\cos\angle U, V = \min_{ x\in U, |x| = 1} \max_{y \in V, |y| = 1} x\cdot y .
\end{equation}
It is well-known that when two distinct eigenvalues of a matrix are close to each other, 
their eigenvectors computed numerically can be switched.
Thus, when 
we estimate the approximation error of the eigenfunctions, we merge the eigenspaces of two eigenvalues close to each
other. In Figure~\ref{fig:unit_disk_eigenfunction} (a), 
we merge the eigenspace of the 9th (or 10th) eigenvalue with that of the 11th eigenvalue, 
and the eigenspace of the 22nd (or 23rd) eigenvalue with that of the 24the eigenvalue.
In Figure~\ref{fig:unit_disk_eigenfunction} (a), we merge
the eigenspace of the 24th (or 25th) eigenvalue and that of the 26 eigenvalue. 


Table~\ref{tbl:unit_disk_eig_N} and Table~\ref{tbl:unit_disk_eig_D}
shows the convergence of the 6th eigenvalue and the corresponding eigenfunction 
computed using FEM and PIM. The approximation error of the ith eigenvalue is estimated as 
$|\lambda_i - \lambda_i^{gt}|$ where $\lambda_i^{gt}$ is the ground truth and 
$\lambda_i$ is the numerical estimation. The approximation error of the eigenfunction
is estimated as the angle between two subspaces.

\begin{table}[!h]
\begin{center}
\begin{tabular}{| c| c | c | c | c |}
\hline
$|V|$ & 684 &  2610 & 10191 & 40296 \\ 
\hline
\multicolumn{5}{|c|}{Eigenvalue}\\
\hline
FEM     & 0.1164 & 0.0371 & 0.0168 & 0.0116\\
\hline
PIM       & 0.8244 & 0.2570 & 0.0555 & 0.0212\\
\hline
\multicolumn{5}{|c|}{Eigenfunction}\\
\hline
FEM     & 0.0026 & 0.0009 & 0.0008 & 0.0008\\
\hline
PIM       & 0.0332 & 0.0193 & 0.0100 & 0.0052\\
\hline
\end{tabular}
\end{center}
\caption{Convergence of the Neumann Eigensystem of unit disk. \label{tbl:unit_disk_eig_N}}
\end{table}

\begin{table}[!h]
\begin{center}
\begin{tabular}{| c| c | c | c | c |}
\hline
$|V|$ & 684 &  2610 & 10191 & 40296 \\ 
\hline
\multicolumn{5}{|c|}{Eigenvalue}\\
\hline
FEM     & 0.0126 & 0.0062 & 0.0045 & 0.0041\\
\hline
PIM       & 0.3228 & 0.1778 & 0.1115 & 0.0762\\
\hline
\multicolumn{5}{|c|}{Eigenfunction}\\
\hline
FEM     & 0.0051 & 0.0017 & 0.0014 & 0.0014\\
\hline
PIM       & 0.0313 & 0.0172 & 0.0079 & 0.0034\\
\hline
\end{tabular}
\end{center}
\caption{Convergence of the Dirichlet Eigensystem of unit disk. \label{tbl:unit_disk_eig_D}}
\end{table}

\subsection{Unit Ball}
\label{unit-ball}
Now test PIM on unit ball. The main purpose of this set of experiments is to see
how PIM performs on 3D domains and what are the good ranges of the parameters 
for 3D domains. We discretize unit ball using 3D mesh generation package provided by 
CGAL~\cite{cgal} which is state of the art in mesh generation and uses the approach of Delaunay 
refinement and CVT-type of optimization for improving the mesh quality. 
We obtain a sequence of four refined meshes where the mesh size of a mesh is 
reduced roughly by half from the previous mesh. The number of vertices of the meshes are $546$, 
$3481$, $25606$ and $195725$.  Figure~\ref{fig:unit_ball}(a) and (b) shows the mesh with $546$ and $25606$ vertices
respectively. Similarly, for PIM, we remove the mesh topology and only retain the 
vertices as the input point set $P$. Those vertices on the boundary of the mesh are taken as the 
input point set $S$.

\begin{figure}[!t]
\begin{center}
\begin{tabular}{cc}
\includegraphics[width=0.45\textwidth]{./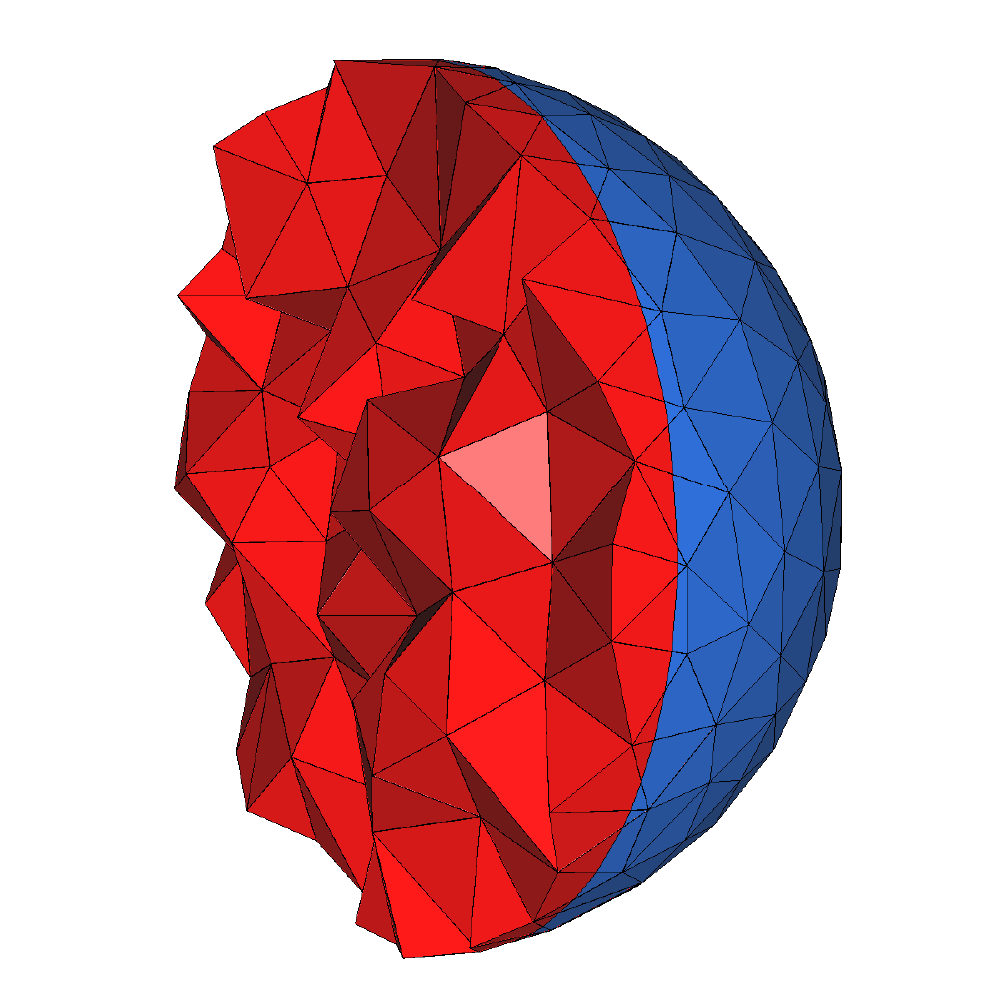}&
\includegraphics[width=0.45\textwidth]{./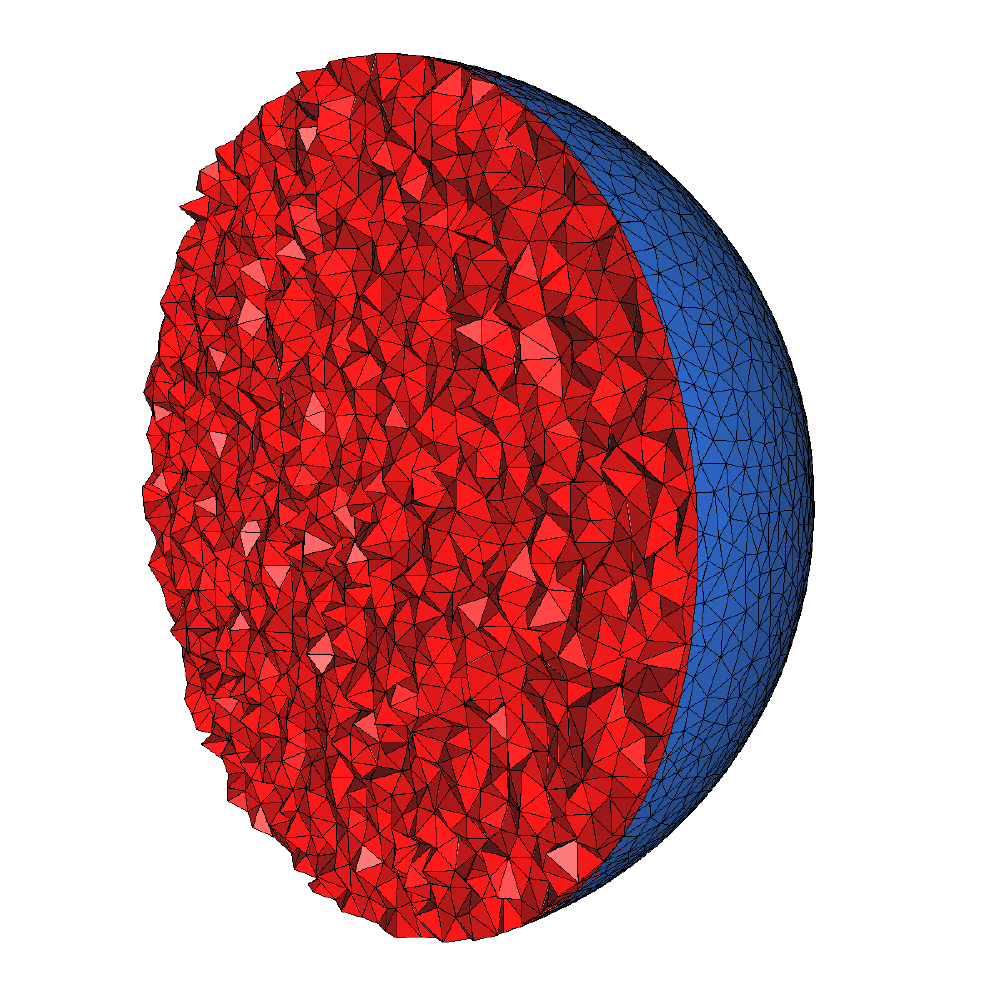}\\
(a)&(b)
\end{tabular}
\end{center}
\vspace{-4mm}
\caption{Discretization of unit ball using tetrahedron mesh. (a) a mesh with 546 vertices, (b) a mesh with 25606 vertices. }
\label{fig:unit_ball}
\end{figure}

\begin{figure}[!t]
\begin{center}
\begin{tabular}{cc}
\includegraphics[width=0.48\textwidth]{./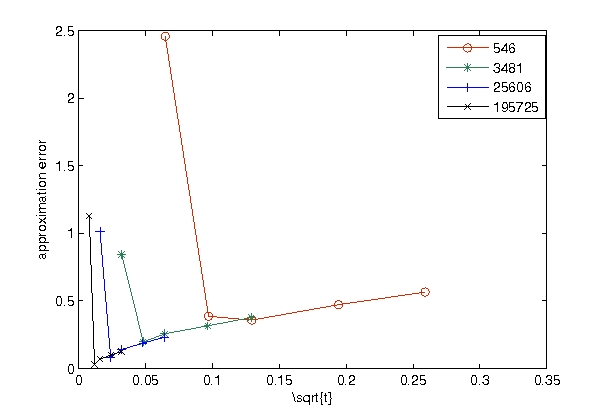}
&
\includegraphics[width=0.48\textwidth]{./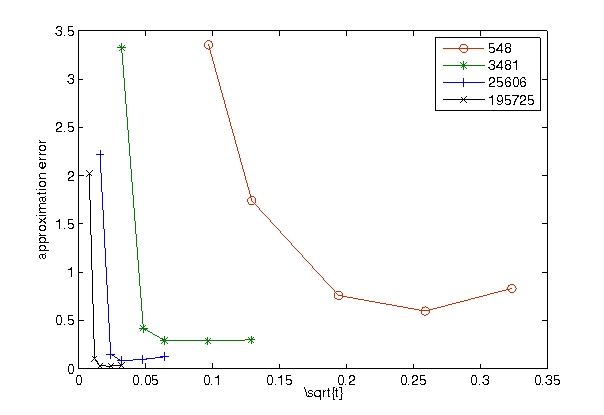}
\\
(a) & (b)
\end{tabular}
\end{center}
\vspace{-4mm}
\caption{Approximation error vs. parameter $t$ on unit ball: (a) the Neumann boundary; (b) the Dirichlet boundary. }
\label{fig:unit_ball_param_t}
\end{figure}

\vspace{0.1in}
\noindent{\bf Choice of Parameters:~}
What is good choice of $\beta$ is clear from the previous experiments on unit disk.  In fact, we observe the same 
effect of the parameter $\beta$ over the domain of unit ball, and thus we fix $\beta = 10^{-4}$ for the remaining experiments. 
Similar to the disk case, we set the boundary condition (Neumann and Dirichlet) as that of the function 
$u_{gt} = \cos 2 \pi r$ with $r = \sqrt{x^2 + y^2+ z^2}$ and see how accurate our algorithm can recover this 
function.

Figure~\ref{fig:unit_ball_param_t} shows the plot of the approximation errors $\|u-u_{gt}\| / \|u_{gt}\|$
as a function of the parameter $\sqrt{t}$. The approximating solution $u$ is computed
by Algorithm $2$ for the Neumann boundary and by Algorithm $4$ for the Dirichlet boundary.
Given a sampling $P$ on $\M$, let $\delta_i$ be the average distance 
from $p_i\in P$ to its $15$ nearest neighbors in $P$ and $\delta$ is the average of $\delta_i$ 
over all points $p_i\in P$. From the above plot, we observe that the best parameter $\sqrt t$ 
is $0.375\delta$ for Neumann boundary and $0.75\delta$ for Dirichlet boundary. Similar to the disk case, 
such optimal choice of $t$ leads to much better empirical results than what is 
predicted in~\cite{SS14}. 

\vspace{0.1in}
\noindent {\bf Convergence for the Poisson Equation:~}
We fix $\sqrt{t} = 0.375\delta$ for the Neumann boundary, and $\sqrt{t}=0.75\delta$
and $\beta = 10^{-4}$ for the Dirichlet boundary. We show the convergence of PIM and also compare it to 
FEM. In FEM, we again use linear elements. Table~\ref{tbl:unit_disk_le} shows the
approximation errors for recovering the function  $\cos 2 \pi r$.
As we can see, FEM has the quadratic convergence
rate for both Neumann boundary and Dirichlet boundary. 
PIM converges in the linear order of $h$ for Neumann boundary  and 
in the order of $h^{3/2}$ for Dirichlet boundary, where $h$ is referred to mesh size. 
This convergence rate is much faster than the order $h^{1/5}$ given by the analysis in~\cite{SS14}. 

\begin{table}[!h]
\begin{center}
\begin{tabular}{| c| c | c | c | c |}
\hline
$|V|$	& 546  & 3481 & 25606 & 195725 \\ 
\hline
\multicolumn{5}{|c|}{Neumann Boundary}\\
\hline
FEM     & 0.2245 & 0.0572 & 0.0143 & 0.0036\\
\hline
PIM       & 0.3864 & 0.1978  & 0.0845 & 0.0293\\
\hline
\multicolumn{5}{|c|}{Dirichlet Boundary}\\
\hline
FEM     & 0.3896 & 0.0878 & 0.0201 & 0.0049\\
\hline
PIM       & 0.7572 & 0.2881 & 0.0952 & 0.0256\\
\hline
\end{tabular}
\end{center}
\vspace*{-0.15in}
\caption{Convergence for recovering the function $\cos 2\pi  r$. 
The solution is computed using Algorithm 2 for Neumann 
boundary and Algorithm 4 for Dirichlet boundary.
\label{tbl:unit_ball_le}}
\end{table}

\begin{figure}[!h]
\begin{center}
\begin{tabular}{cc}
\includegraphics[width=0.45\textwidth]{./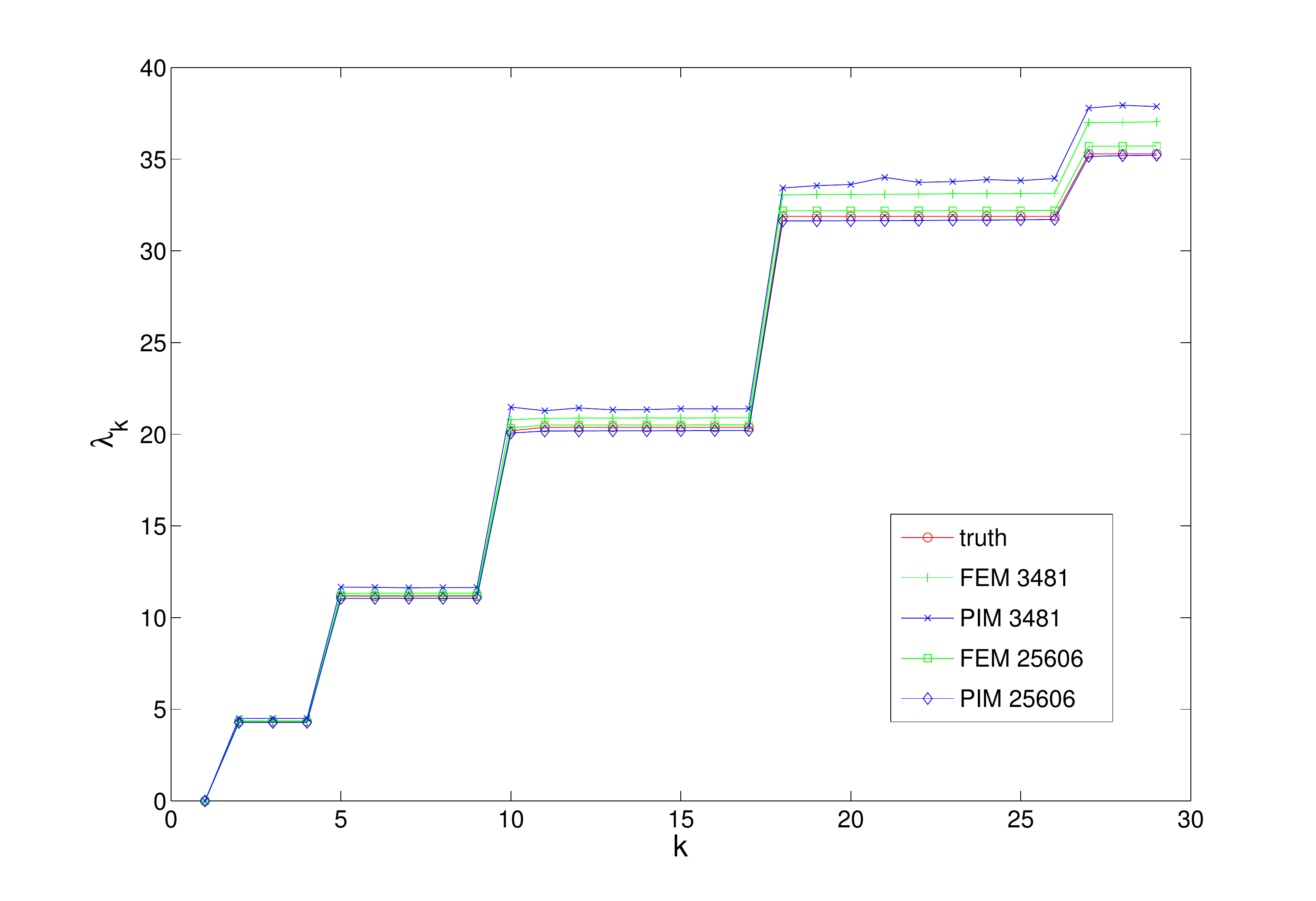}&
\includegraphics[width=0.45\textwidth]{./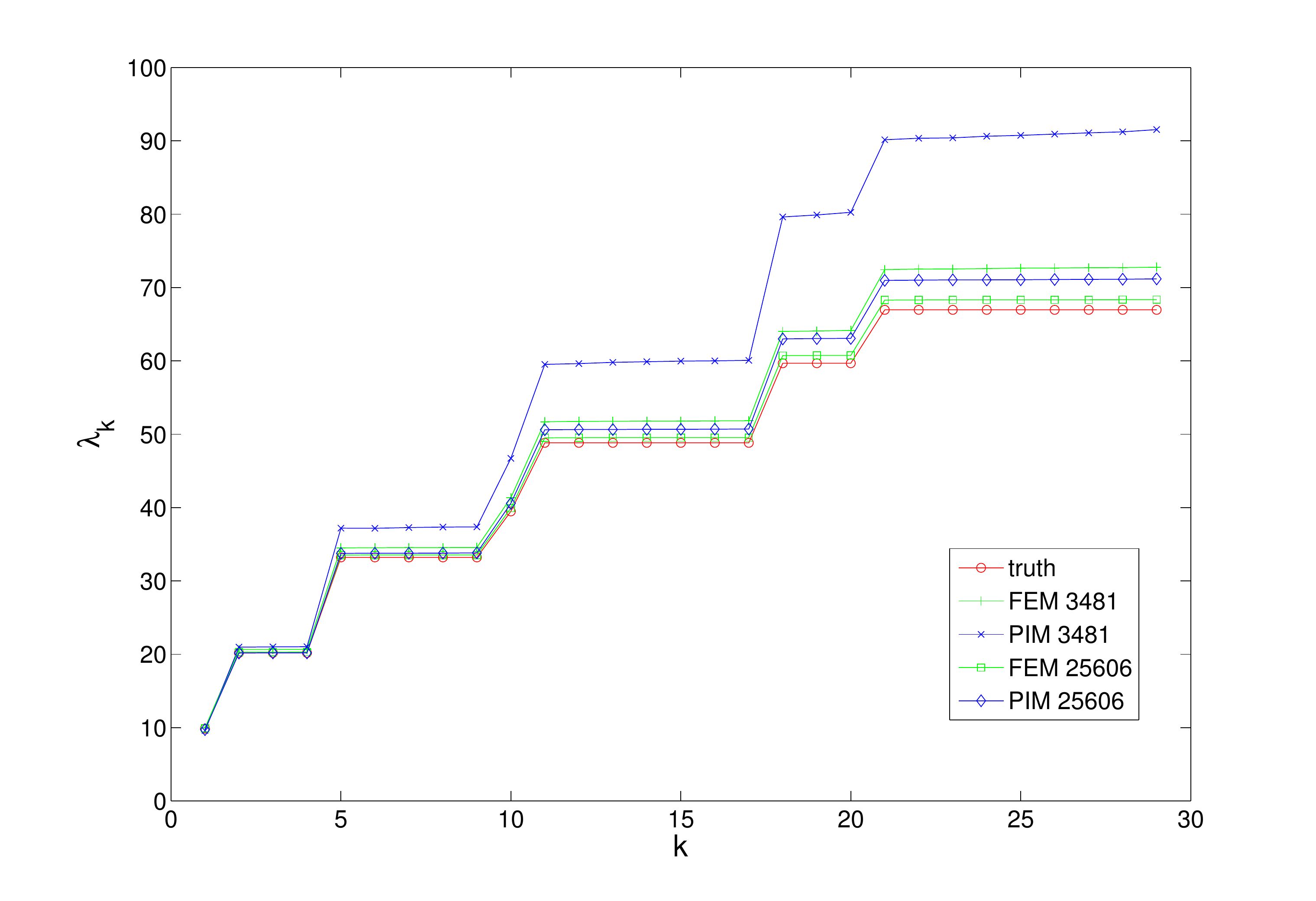}\\
(a)&(b)
\end{tabular}
\end{center}
\vspace*{-0.15in}
\caption{The eigenvalues of unit ball estimated by FEM and PIM over the meshes (samplings) with
3481 points and 25606 points. (a) Neumann eigenvalues; (b) Dirichlet eigenvalues}
\label{fig:unit_ball_eig}
\end{figure}

\begin{figure}[!h]
\begin{center}
\begin{tabular}{cc}
\includegraphics[width=0.45\textwidth]{./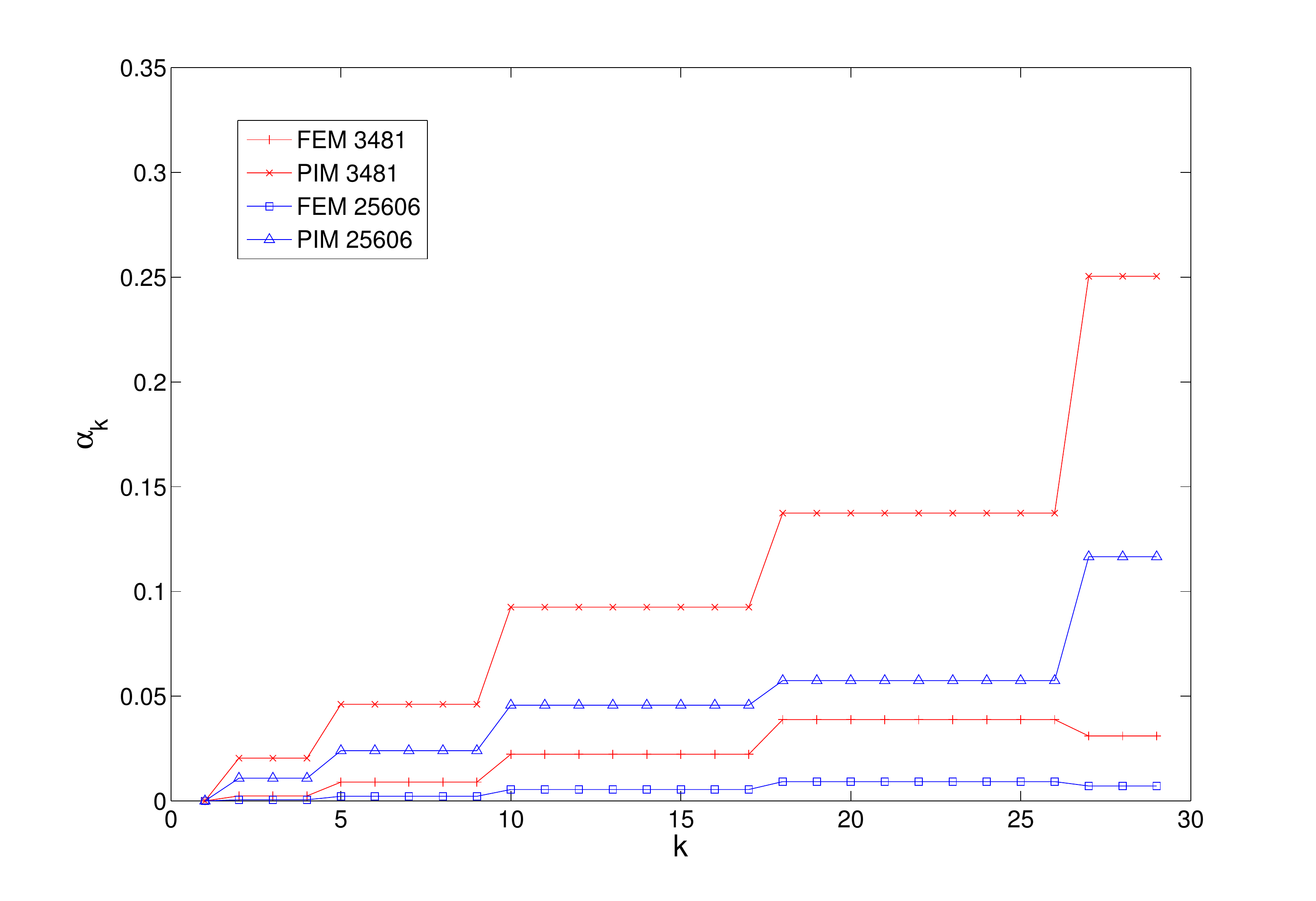}&
\includegraphics[width=0.45\textwidth]{./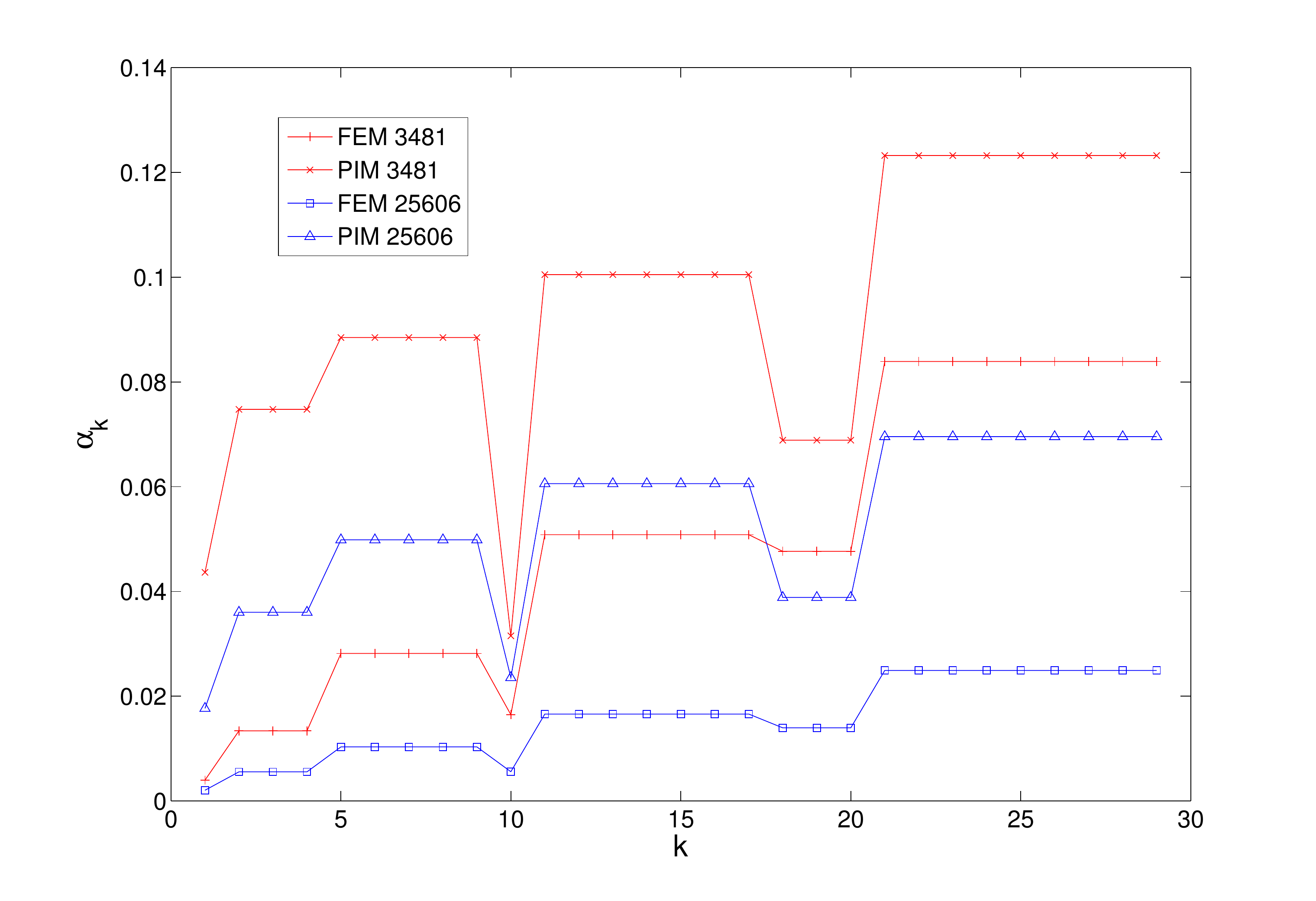}\\
(a)&(b)
\end{tabular}
\end{center}
\vspace*{-0.15in}
\caption{The approximation errors of the eigenfunctions for unit ball estimated using FEM (PIM) over the meshes (samplings) with 
3481 points and 25606 points. (a) Neumann eigenfunctions; (b) Dirichlet eigenfunctions}
\label{fig:unit_ball_eigenfunction}
\end{figure}

\vspace{0.1in}
\noindent{\bf Eigensystem:~} 
We compute the eigensystem of Laplacian using Algorithm 3 EigenNeumann for the problem~\eqref{eigen-neumann}
and Algorithm 5 EigenDirichlet for the problem~\eqref{eigen-dirichlet}. 
We choose the parameters as before. 
Figure~\ref{fig:unit_ball_eig} shows the first $30$ eigenvalues computed using PIM (FEM) over 
the sampling (mesh) with 3481 points and 25606 points. Both methods give a good estimation for the eigenvalues. 
Figure~\ref{fig:unit_ball_eigenfunction} shows the approximation error of the first 30 eigenfunctions, 
where the approximation error is computed as before, i.e., the angle between the eigenspaces of ground 
truth and the eigenspaces estimated by PIM or FEM (see Equation~\eqref{eqn:erroreigenfunction}). 

\begin{figure}[!t]
\begin{center}
\begin{tabular}{cc}
\includegraphics[width=0.45\textwidth]{./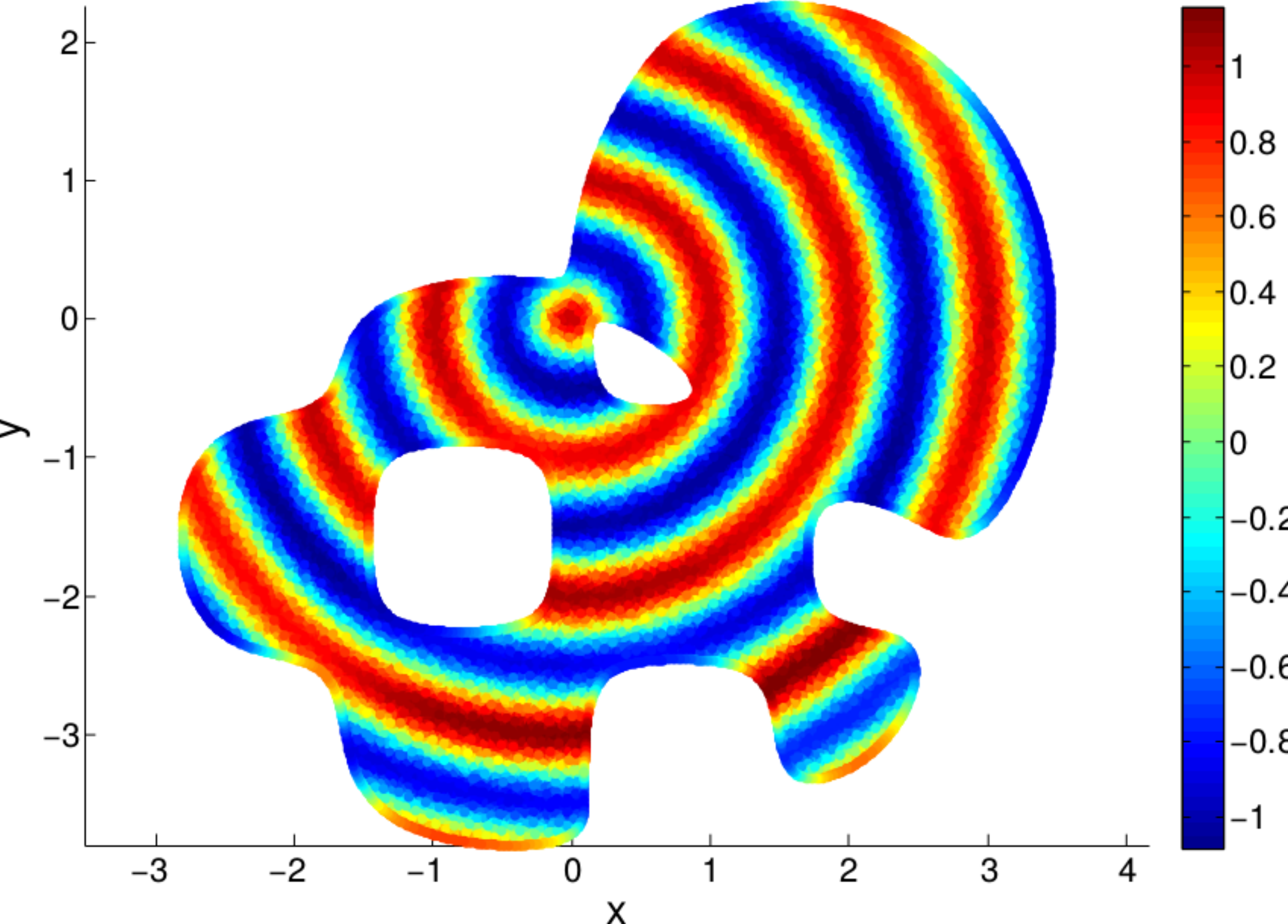}&
\includegraphics[width=0.45\textwidth]{./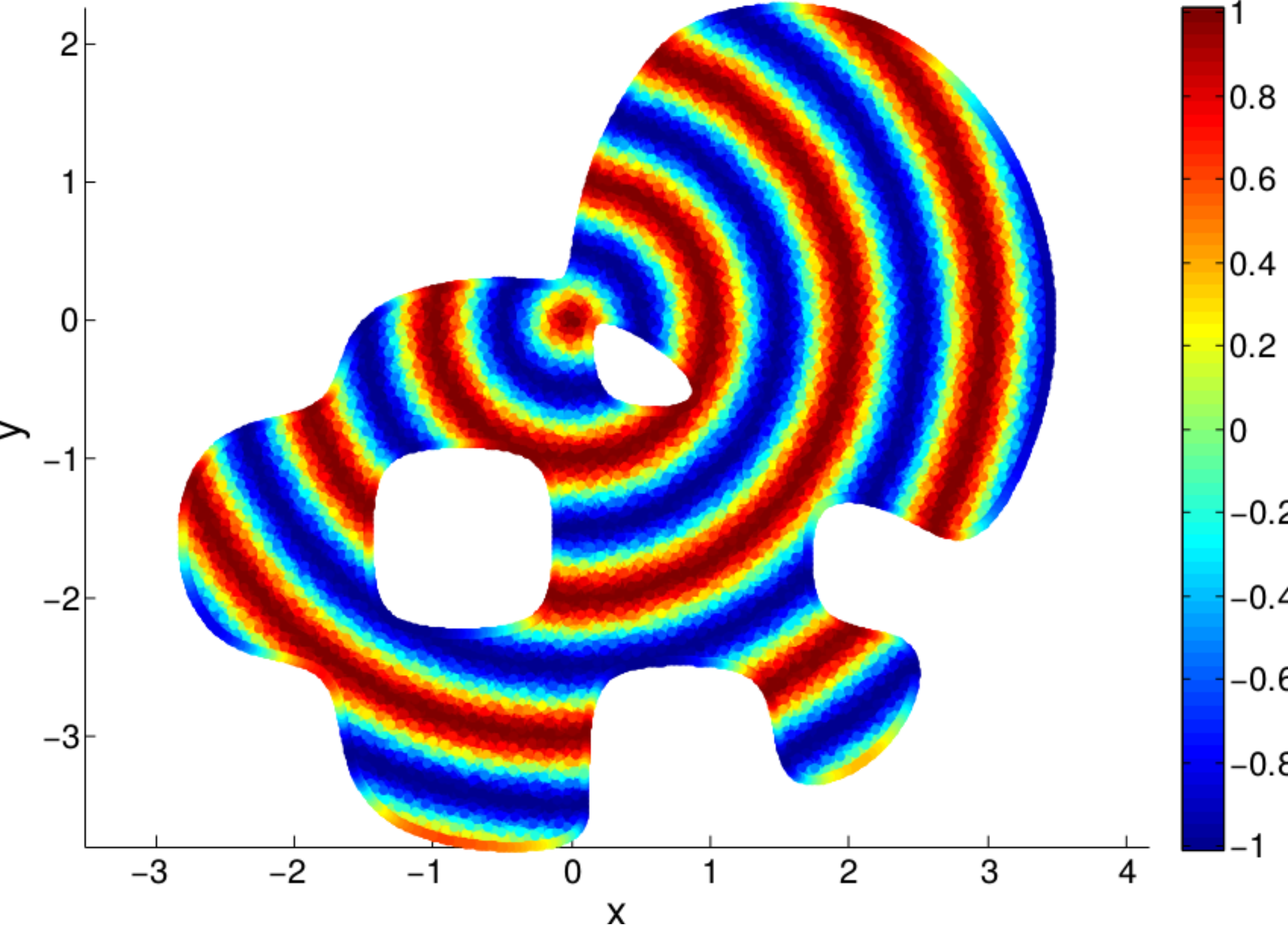}\\
(a) & (b)
\end{tabular}
\end{center}
\vspace*{-0.15in}
\caption{The function $u_{gt} = \cos 2 \pi r$ recovered by PIM for Lake from a point cloud with 64131 points: 
(a) Neumann boundary; (b) Dirichlet boundary}
\label{fig:lake}
\end{figure}

\subsection{An irregular Domain on Plane}
We also test PIM over an irregular domain: a planar domain with two holes, 
which we call Lake as it looks like a lake with two islands. 
Again, we set the boundary condition (both Neumann and Dirichlet) as that of the function 
$u_{gt} = \cos 2 \pi r$ with $r = \sqrt{x^2 + y^2}$ and see how accurate our algorithm
can recover this function.
Based on the previous experiments on unit disk, we fix the parameter $\sqrt{t} = 0.75 \delta$ and $\beta = 10^{-4}$. 
Figure~\ref{fig:lake} shows the functions recovered by PIM. 
The approximation errors of the solutions are listed in Table~\ref{tbl:lake_le}.
As we can see, PIM converges in the linear order for the problem with the Neumann boundary and 
in the order of $h^{3/2}$ for the problem with the Dirichlet boundary. In addition, the solution
for the Neumann boundary has bigger approximation errors than that for the Dirichlet boundary.

\begin{table}[!h]
\begin{center}
\begin{tabular}{| c| c | c | c | c |}
\hline
$|V|$		& 942 & 3745 & 16034 & 64131 \\ 
\hline
Neumann     	& 4.4595 & 1.8360 & 0.5842 & 0.3029\\
\hline
Dirichlet       & 0.3030 & 0.0907 & 0.0247 & 0.0081\\
\hline
\end{tabular}
\end{center}
\vspace*{-0.15in}
\caption{Convergence of PIM in solving the Poisson Equations over Lake\label{tbl:lake_le}}
\end{table}

The ground truth of the eigenvalues and the eigenfunctions of Lake can not be expressed
in an explicit way. Thus we compare the results of PIM with that of FEM. 
Figure~\ref{fig:lake_eigenvalue} shows the first 30 eigenvalues of Lake estimated by PIM and FEM.
Both methods give a consistent estimation of the eigenvalues of Lake. 
Figure~\ref{fig:lake_eigenfunction} shows the 10th eigenfunction of Lake estimated by PIM from a point cloud
with 64131 points.  
Table~\ref{tbl:lake_eigenfunction} shows the relative $L^2$ errors $\|u_{PIM}-u_{FEM}\|/\|u_{FEM}\|$ 
of the 10th  eigenfunction estimated from different point clouds or meshes. As we can see, 
the relative errors decrease as the sample points increases for both boundary conditions. 
\begin{figure}[!t]
\begin{center}
\begin{tabular}{cc}
\includegraphics[width=0.45\textwidth]{./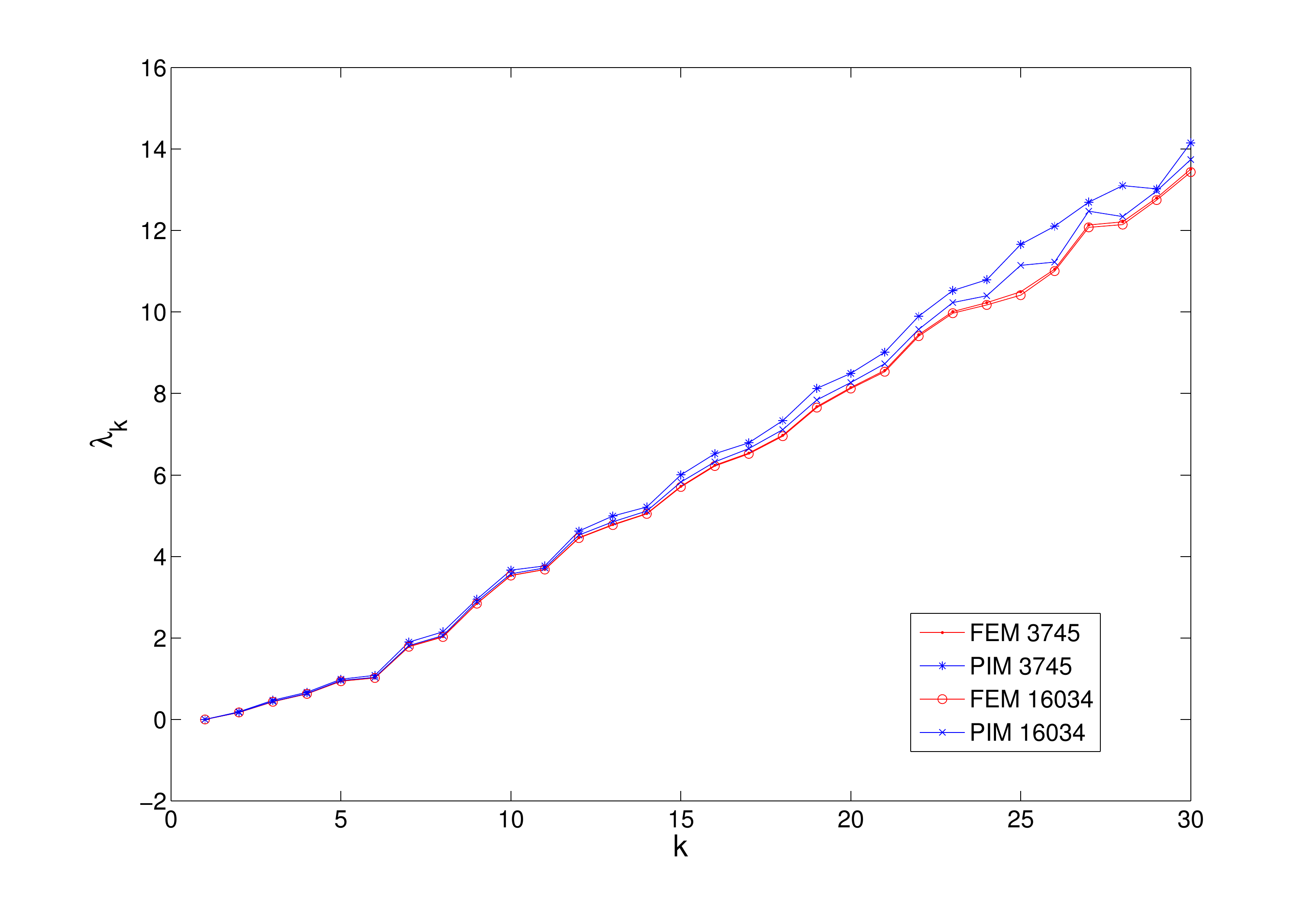}
&
\includegraphics[width=0.45\textwidth]{./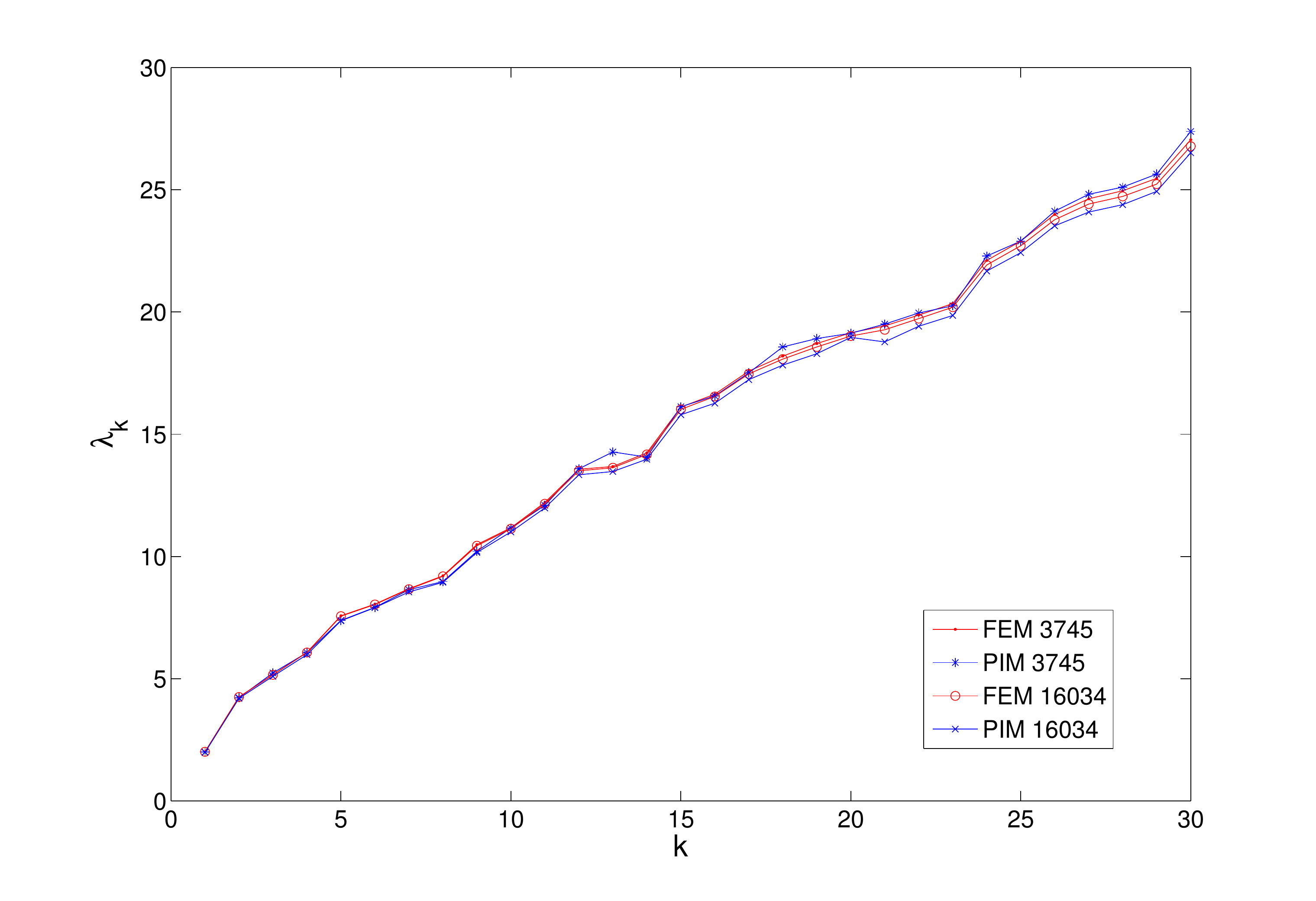}
\\
(a) & (b)
\end{tabular}
\end{center}
\vspace*{-0.15in}
\caption{The eigenvalues of Lake estimated by PIM and FEM with 3745 points and 16034 points: (a) Neumann problem; (b) Dirichlet problem}
\label{fig:lake_eigenvalue}
\end{figure}

\begin{figure}[!t]
\begin{center}
\begin{tabular}{cc}
\includegraphics[width=0.45\textwidth]{./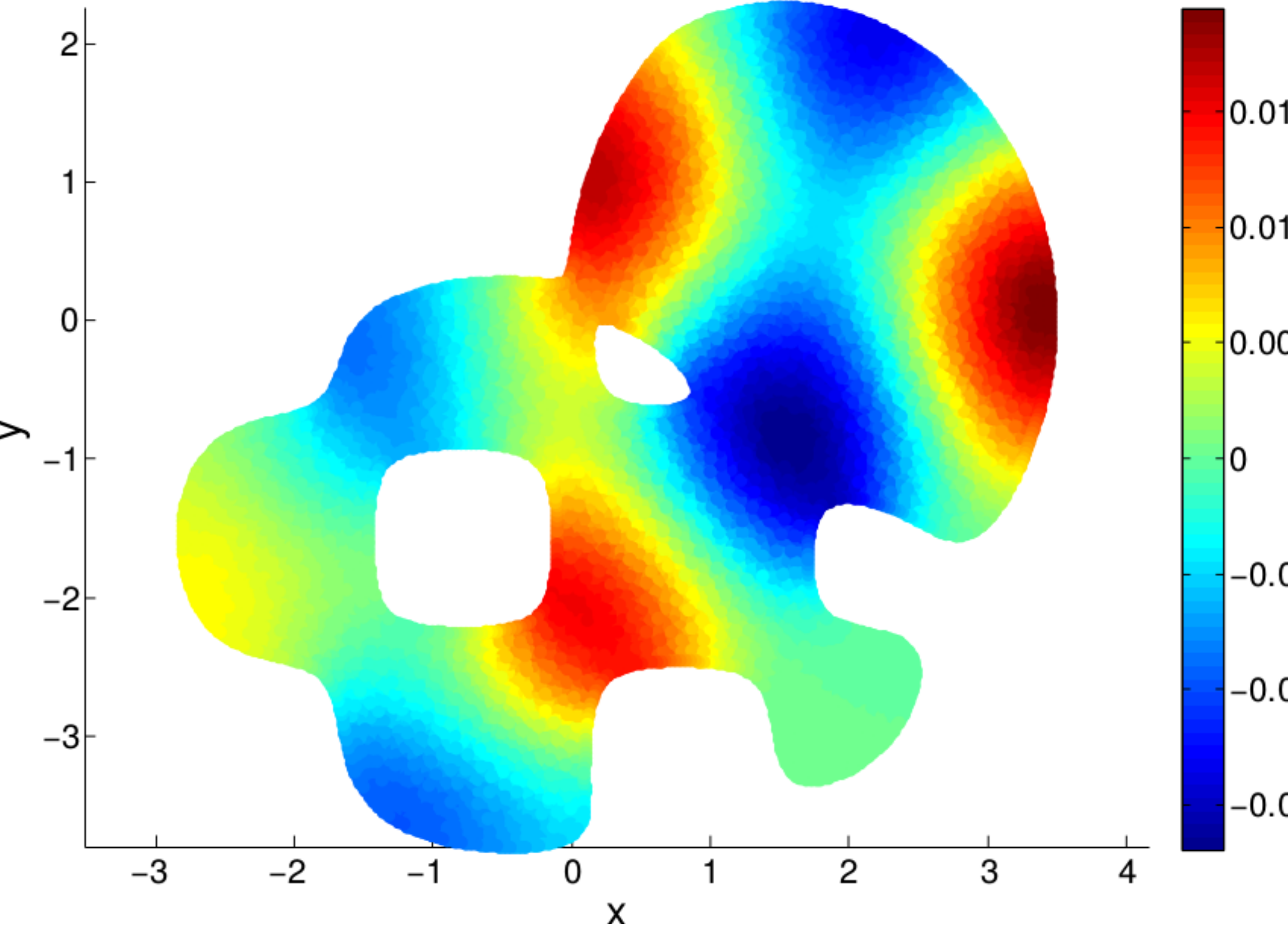}
&
\includegraphics[width=0.45\textwidth]{./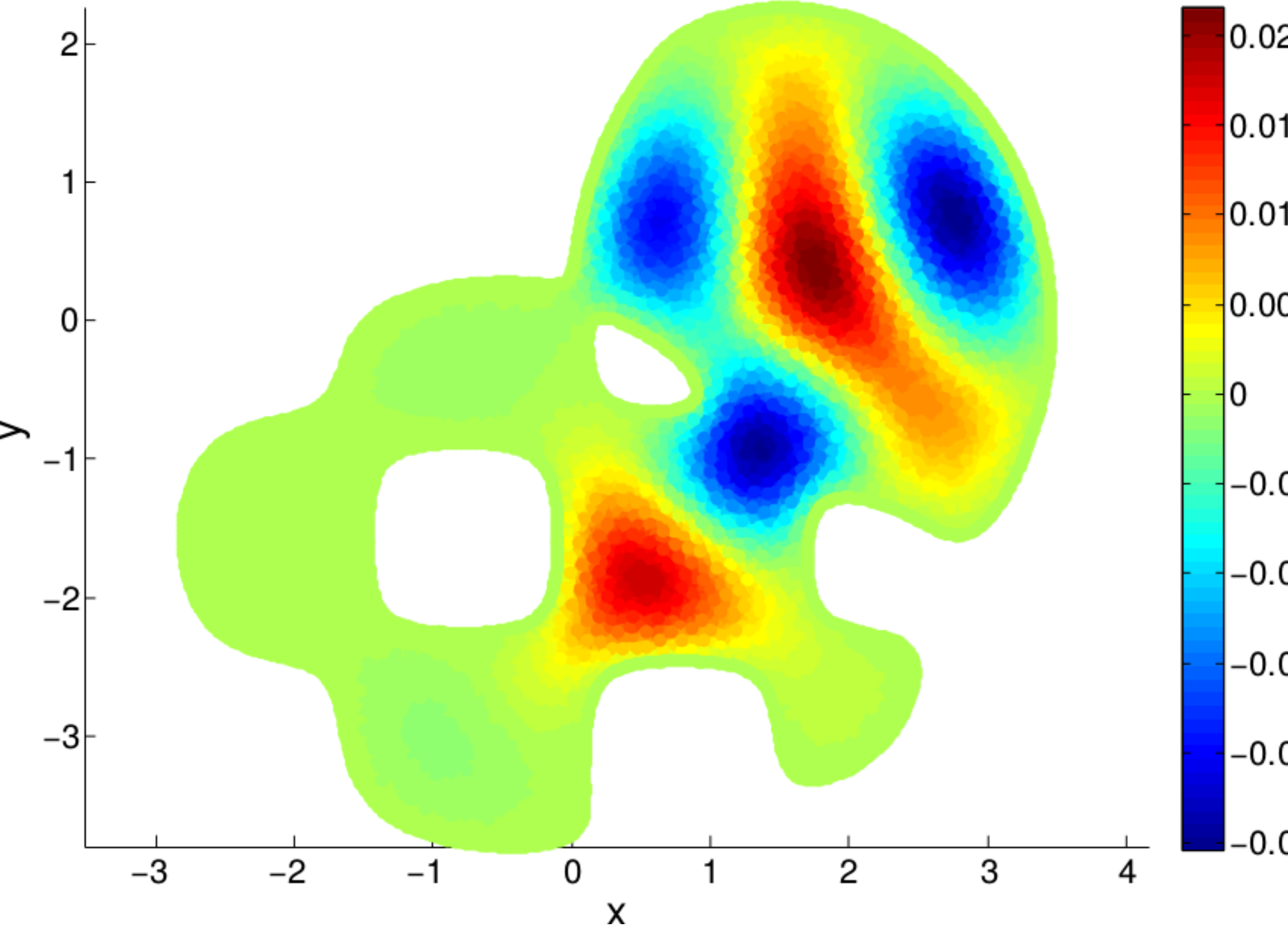}
\\
(a) & (b)
\end{tabular}
\end{center}
\vspace*{-0.15in}
\caption{The 10th eigenfunction of Lake estimated by PIM from a point cloud with 64131 points: (a) Neumann problem; (b) Dirichlet problem}
\label{fig:lake_eigenfunction}
\end{figure}

\begin{table}[!t]
\begin{center}
\begin{tabular}{| c | c | c | c | c |}
\hline
Number of points  & 942 & 3745 & 16034 & 64131\\
\hline
Neumann Boundary  & 0.0336 & 0.0124 & 0.0046 & 0.0022\\
\hline
Dirichlet Boundary& 0.0893 & 0.0329 & 0.0088 & 0.0033\\
\hline
\end{tabular}
\end{center}
\caption{The relative $L^2$ errors $\|u_{PIM}-u_{FEM}\|/\|u_{FEM}\|$ 
of the 10th eigenfunction of Lake. \label{tbl:lake_eigenfunction}}
\end{table}

\subsection{General Submanifolds}
In this subsection, we apply PIM to solve the Poisson equations 
on a few examples of general submanifolds. In the following experiments, we fix 
$\sqrt{t}=0.75\delta$ and $\beta = 10^{-4}$.  

\begin{figure}[!t]
\begin{center}
\begin{tabular}{cc}
\includegraphics[width=0.45\textwidth]{./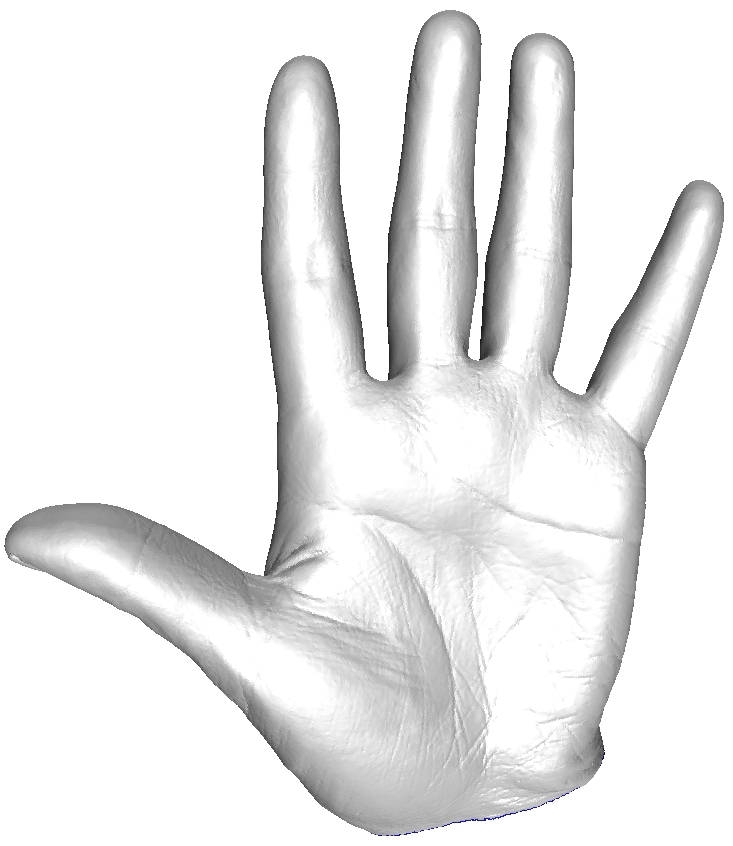} &
\includegraphics[width=0.45\textwidth]{./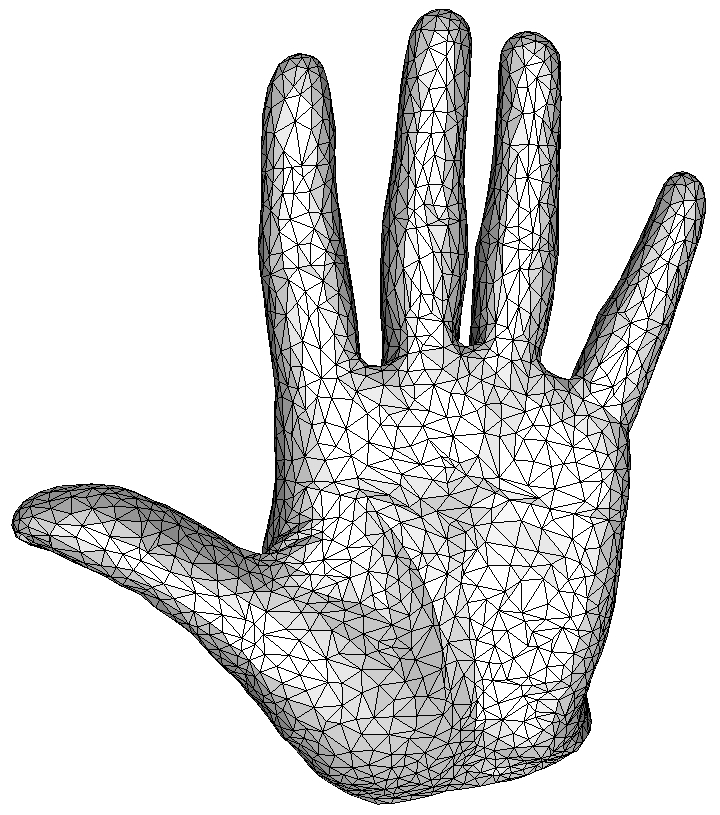}\\
(a) & (b)
\end{tabular}
\end{center}
\vspace{-4mm}
\caption{Lefthand (a) the original model; (b) the mesh with 3147 points}
\label{fig:lefthand_model}
\end{figure}

The first example is a model (Lefthand) of the left hand of a human obtained by 3D scanning. 
The original model is a triangle mesh with $193467$ vertices, as shown in Figure~\ref{fig:lefthand_model}(a). 
We use Meshlib~\cite{meshlab} to simplify the mesh to obtain the triangle meshes with $50205$, 
$12561$ and $3147$ vertices, over which FEM is applied to solve the 
Poisson equation. Figure~\ref{fig:lefthand_model}(b) shows the mesh with $3147$ vertices. 
For PIM, the vertices of the meshes are taken 
as the input point sets $P$, and those on the boundary are taken as the input point sets $S$.
As there is no analytic solution of the Poisson equation for a general manifold, we compare the 
solutions from FEM and PIM to each other, and show that they are consistent to each other. 
We solve over the model Lefthand the following Neumann problem
\begin{eqnarray}
  \left\{\begin{array}{rl}
      -\Delta u(\bx) = |\bx|^2 , &\bx\in \mathcal{M},\\
      \frac{\p u}{\p \bn}(\bx)=1,&\bx\in \p \mathcal{M},
\end{array}\right.
\label{hand_neumann}
\end{eqnarray}
and the following Dirichlet problem
\begin{eqnarray}
  \left\{\begin{array}{rl}
      -\Delta u(\bx) = |\bx|^2 , &\bx\in \mathcal{M},\\
      u(\bx)=1,&\bx\in \p \mathcal{M}.
\end{array}\right.
\label{hand_dirichlet}
\end{eqnarray}

Figure~\ref{fig:lefthand_poisson} shows the solutions computed by point integral method over
the point set with $193467$ points. 
Table~\ref{tbl:lefthand_possion} shows the approximation errors computed as $\|u_{PIM}-u_{FEM}\|/\|u_{FEM}\|$, 
where $u_{PIM}$ and $u_{FEM}$ are the solutions computed by PIM
and FEM respectively. The error decreases as the number of points increases, 
which shows both solutions from PIM and FEM are consistent. 

\begin{figure}[!t]
\begin{center}
\begin{tabular}{cc}
\includegraphics[width=0.4\textwidth]{./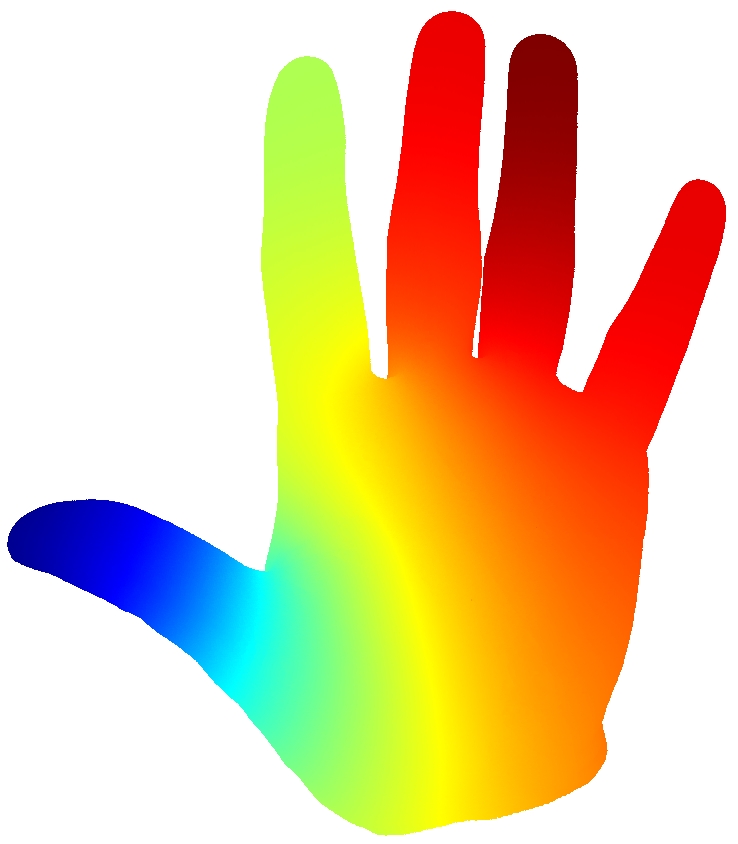} &
\includegraphics[width=0.4\textwidth]{./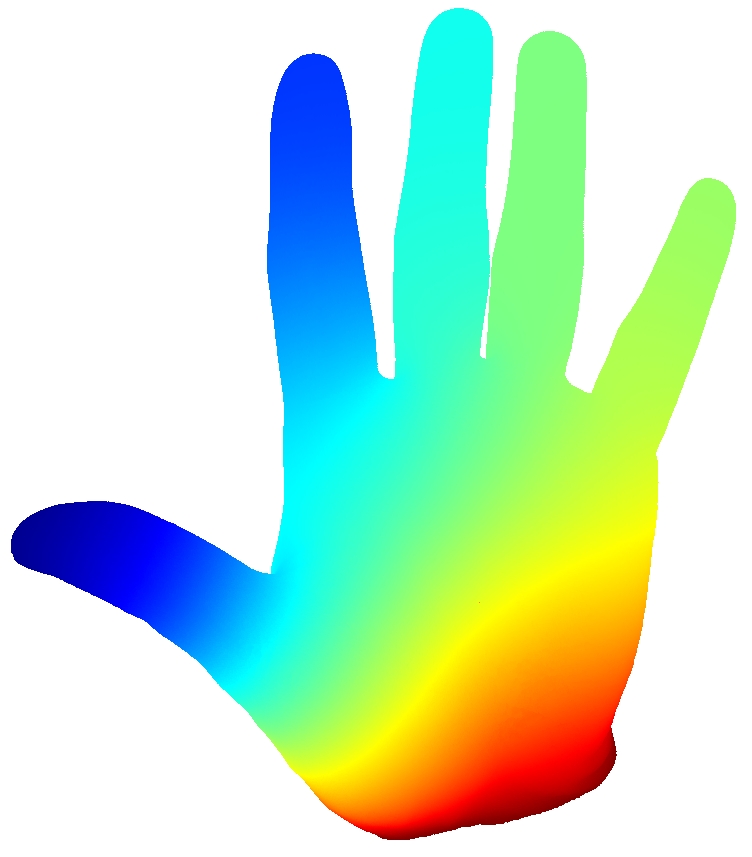}\\
(a) & (b)
\end{tabular}
\end{center}
\vspace{-4mm}
\caption{(a) Neumann problem~\eqref{hand_neumann}; (b) Dirichlet problem~\eqref{hand_dirichlet}}
\label{fig:lefthand_poisson}
\end{figure}

\begin{table}[!h]
\begin{center}
\begin{tabular}{| c | c | c | c | c|}
\hline
$|V|$  & 3147   & 12561  & 50205 & 193467\\
\hline
Neumann  & 0.5944 & 0.0687 & 0.0594 & 0.0035\\
\hline
Dirichlet& 0.7109 & 0.0259 & 0.0229 & 0.0067\\
\hline
\end{tabular}
\end{center}
\caption{The approximation errors $\|u_{PIM}-u_{FEM}\|/\|u_{FEM}\|$ where
$u_{PIM}$ and $u_{FEM}$ are the solutions of the Poisson equations \eqref{hand_neumann} 
and \eqref{hand_dirichlet} computed by PIM and FEM respectively. 
\label{tbl:lefthand_possion}}
\end{table}

Figure~\ref{fig:lefthand_eigen} shows the first $30$ eigenvalues of Lefthand
using FEM over the mesh of $193467$ vertices and using PIM over the 
samplings with different number of points. As we can see, PIM can accurately estimate the 
eigenvalues of the Laplace-Beltrami operator with both the Neumann boundary and 
the Dirichlet boundary. Finally, Figure~\ref{fig:various_eigen} shows the 10th eigenfunction 
estimated by PIM over various models.  

\begin{figure}[!t]
\begin{center}
\begin{tabular}{cc}
\includegraphics[width=0.45\textwidth]{./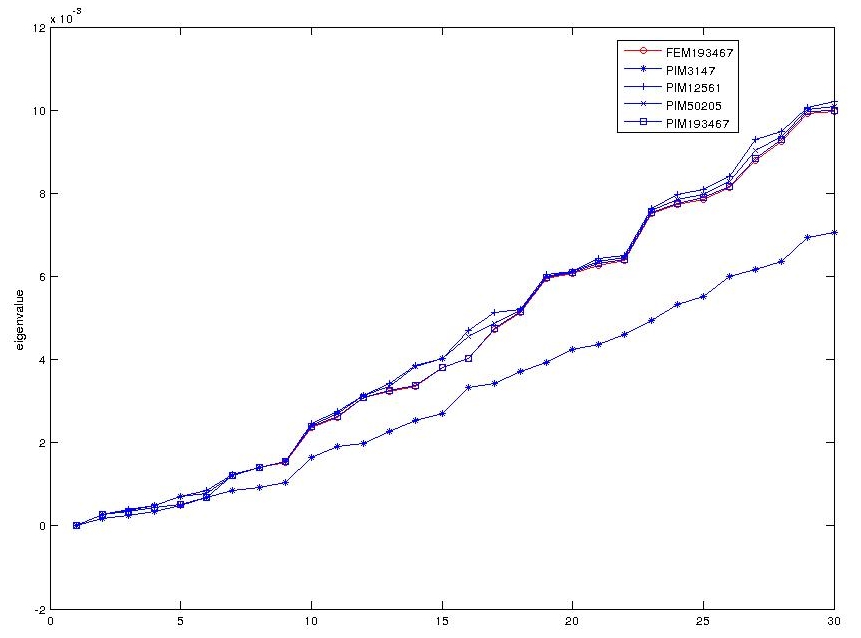} &
\includegraphics[width=0.45\textwidth]{./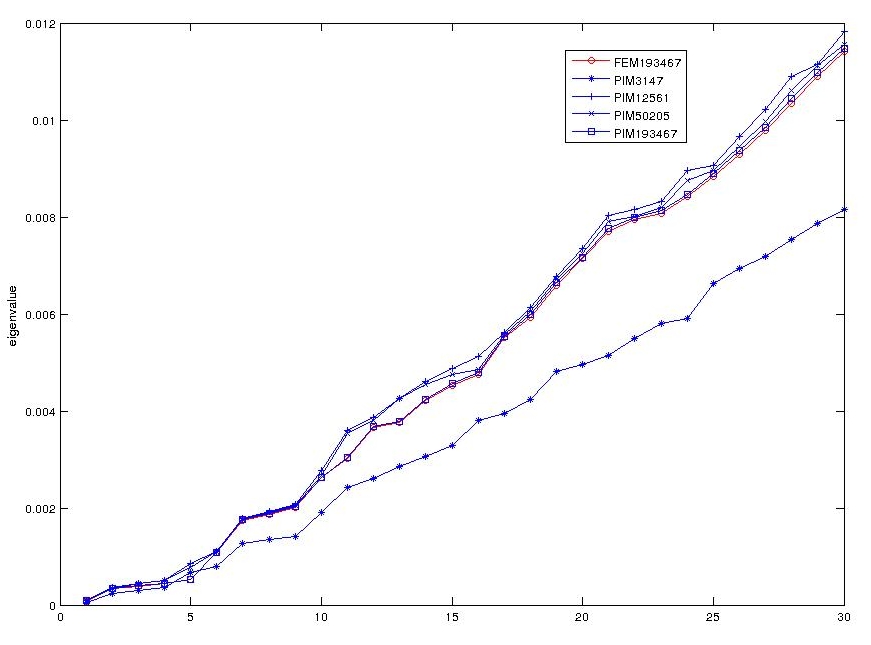}\\
(a) & (b)
\end{tabular}
\end{center}
\vspace{-4mm}
\caption{(a) Neumann problem; (b) Dirichlet problem}
\label{fig:lefthand_eigen}
\end{figure}
\begin{figure}[!t]
\begin{center}
\begin{tabular}{cccc}
\includegraphics[width=0.24\textwidth]{./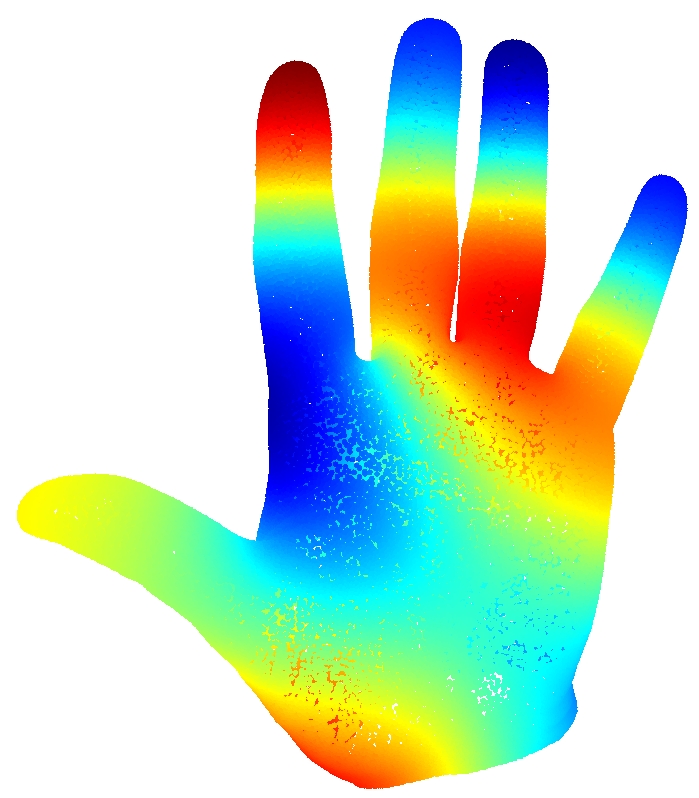} &
\includegraphics[width=0.20\textwidth]{./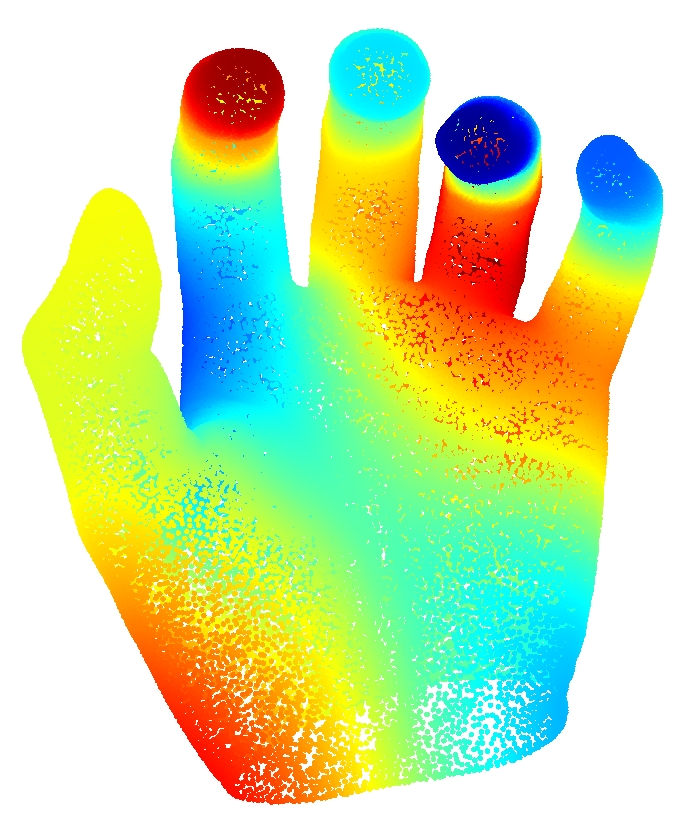} &
\includegraphics[width=0.24\textwidth]{./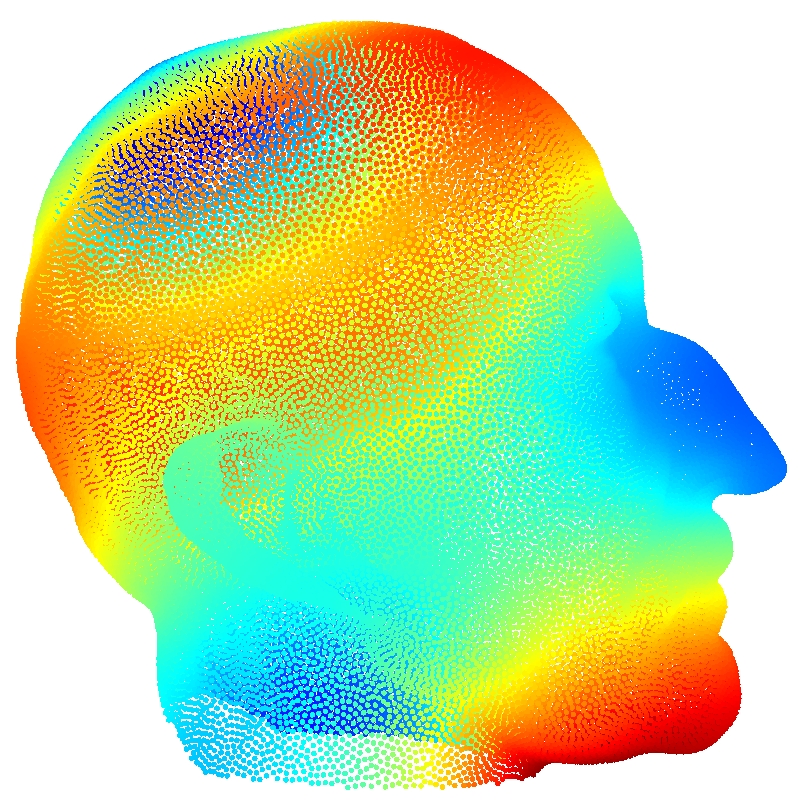} &
\includegraphics[width=0.22\textwidth]{./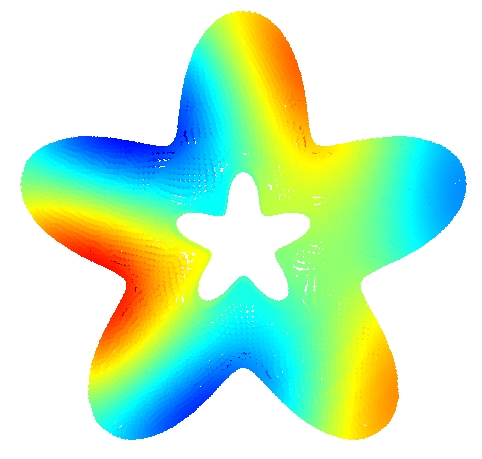} \\
\includegraphics[width=0.24\textwidth]{./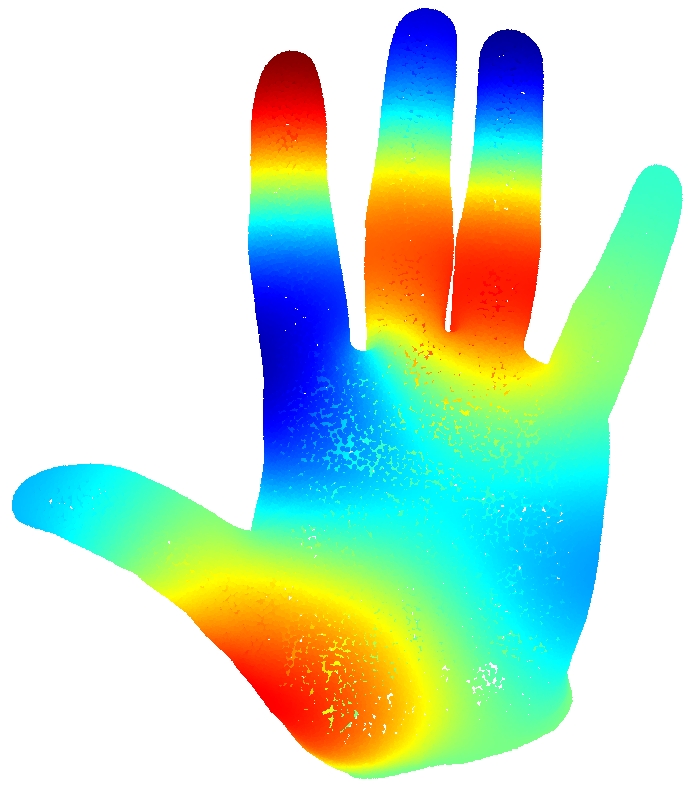} &
\includegraphics[width=0.20\textwidth]{./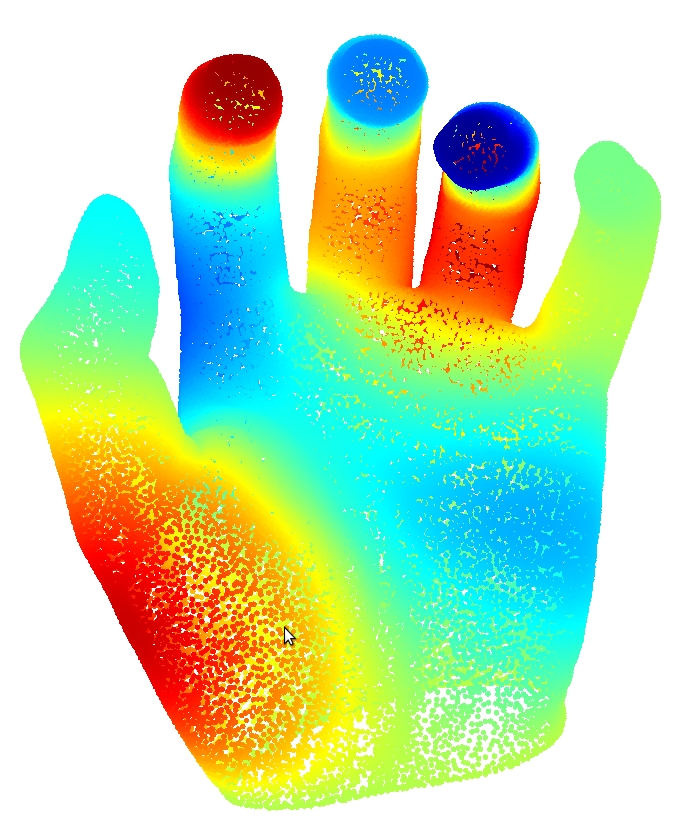} &
\includegraphics[width=0.24\textwidth]{./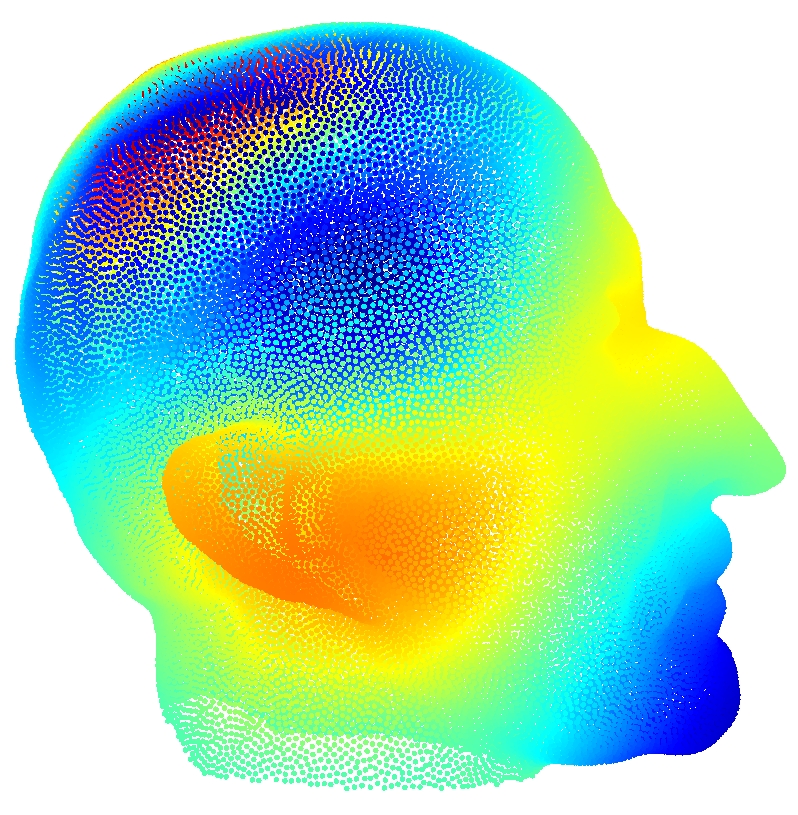} &
\includegraphics[width=0.20\textwidth]{./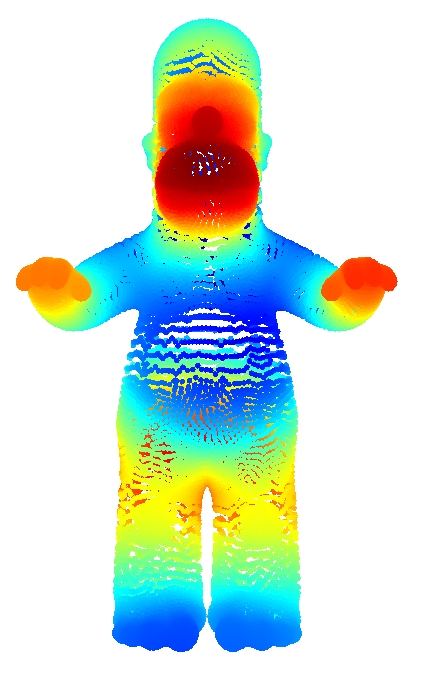} \\
\end{tabular}
\end{center}
\vspace{-4mm}
\caption{The 10th eigenfunction: Neumann boundary in the first row and Dirichlet boundary in the second row. 
Two models in the rightmost column have no boundary.}
\label{fig:various_eigen}
\end{figure}

\section{Conclusion}
\label{sec:discussion}
We have described the point integral method for solving the standard Poisson equation
on manifolds and the eigensystem of the Laplace-Beltrami operator from point clouds, 
and presented a few numerical examples, which not only demonstrate the convergence 
of PIM in solving the Poisson-type equations, but also reveal the right choices of 
the parameters $t$ and $\beta$ used in PIM. In addition, the numerical experiments 
show PIM has a faster empirical convergence rate than what is predicted by the 
analysis in~\cite{SS14}, which suggests that the analysis may be improved. We are also 
considering to generalize PIM to solve other PDEs on manifolds. 

\vspace{0.1in}
\noindent{\bf Acknowledgments.}
This research was partial supported by NSFC Grant (11201257 to Z.S., 11371220 to Z.S. and J.S. and 11271011 to J.S.), 
and  National Basic Research Program of China (973 Program 2012CB825500 to J.S.).

\bibliographystyle{abbrv}
\bibliography{poisson}

\begin{thebibliography}{10}

\bibitem{meshlab}
{\em {MeshLab}}.
\newblock http://meshlab.sourceforge.net/.

\bibitem{belkin2003led}
M.~Belkin and P.~Niyogi.
\newblock Laplacian eigenmaps for dimensionality reduction and data
  representation.
\newblock {\em Neural Computation}, 15(6):1373--1396, 2003.

\bibitem{BelkinN05}
M.~Belkin and P.~Niyogi.
\newblock Towards a theoretical foundation for laplacian-based manifold
  methods.
\newblock In {\em COLT}, pages 486--500, 2005.

\bibitem{CLEM_08}
M.~Belkin and P.~Niyogi.
\newblock Convergence of laplacian eigenmaps.
\newblock {\em preprint, short version NIPS 2008}, 2008.

\bibitem{pcdlp2009}
M.~Belkin, J.~Sun, and Y.~Wang.
\newblock Constructing laplace operator from point clouds in rd.
\newblock In {\em SODA '09: Proceedings of the Nineteenth Annual ACM -SIAM
  Symposium on Discrete Algorithms}, pages 1031--1040, Philadelphia, PA, USA,
  2009. Society for Industrial and Applied Mathematics.

\bibitem{Bertalmio}
M.~Bertalmio, L.-T. Cheng, S.~Osher, and G.~Sapiro.
\newblock Variational problems and partial differential equations on implicit
  surfaces.
\newblock {\em Journal of Computational Physics}, 174(2):759 -- 780, 2001.

\bibitem{Bertalmio1}
M.~Bertalmio, F.~Memoli, L.-T. Cheng, G.~Sapiro, and S.~Osher.
\newblock Variational problems and partial differential equations on implicit
  surfaces: Bye bye triangulated surfaces?
\newblock In {\em Geometric Level Set Methods in Imaging, Vision, and
  Graphics}, pages 381--397. Springer New York, 2003.

\bibitem{BoissonnatDG12}
J.-D. Boissonnat, R.~Dyer, and A.~Ghosh.
\newblock Stability of delaunay-type structures for manifolds: [extended
  abstract].
\newblock In T.~K. Dey and S.~Whitesides, editors, {\em Symposium on
  Computational Geometry}, pages 229--238. ACM, 2012.

\bibitem{caoyau}
H.-D. Cao and S.-T. Yau.
\newblock {\em Geometric flows}, volume~12 of {\em Surveys in Differential
  Geometry}.
\newblock International Press of Boston, Inc., 2007.

\bibitem{daCosta}
R.~C.~T. da~Costa.
\newblock Quantum mechanics of a constrained particle.
\newblock {\em PHYSICAL REVIEW A}, 25(6), April 1981.

\bibitem{Defay}
R.~Defay and I.~Priogine.
\newblock {\em Surface Tension and Adsorption}.
\newblock John Wiley \& Sons, New York, 1966.

\bibitem{Dey:2006:CSR}
T.~K. Dey.
\newblock {\em Curve and Surface Reconstruction: Algorithms with Mathematical
  Analysis (Cambridge Monographs on Applied and Computational Mathematics)}.
\newblock Cambridge University Press, New York, NY, USA, 2006.

\bibitem{Dey:2010}
T.~K. Dey, P.~Ranjan, and Y.~Wang.
\newblock Convergence, stability, and discrete approximation of laplace
  spectra.
\newblock In {\em Proceedings of the Twenty-First Annual ACM-SIAM Symposium on
  Discrete Algorithms}, SODA '10, pages 650--663, Philadelphia, PA, USA, 2010.
  Society for Industrial and Applied Mathematics.

\bibitem{Dodziuk76}
J.~Dodziuk and V.~K. Patodi.
\newblock Riemannian structures and triangulations of manifolds.
\newblock {\em Journal of Indian Math. Soc.}, 40:1–52, 1976.

\bibitem{Dziuk88}
G.~Dziuk.
\newblock Finite elements for the beltrami operator on arbitrary surfaces.
\newblock In S.~Hildebrandt and R.~Leis, editors, {\em Partial differential
  equations and calculus of variations}, volume 1357 of {\em Lecture Notes in
  Mathematics}, pages 142--155. Springer, 1988.

\bibitem{Hein:2005:GMW}
M.~Hein, J.-Y. Audibert, and U.~von Luxburg.
\newblock From graphs to manifolds - weak and strong pointwise consistency of
  graph laplacians.
\newblock In {\em Proceedings of the 18th Annual Conference on Learning
  Theory}, COLT'05, pages 470--485, Berlin, Heidelberg, 2005. Springer-Verlag.

\bibitem{Lafon04diffusion}
S.~Lafon.
\newblock {\em Diffusion Maps and Geodesic Harmonics}.
\newblock PhD thesis, 2004.

\bibitem{Lai13}
R.~Lai, J.~Liang, and H.~Zhao.
\newblock A local mesh method for solving pdes on point clouds.
\newblock {\em Inverse Problem and Imaging}, to appear.

\bibitem{Levy}
B.~Levy.
\newblock Laplace-beltrami eigenfunctions towards an algorithm that
  "understands" geometry.
\newblock In {\em Shape Modeling and Applications, 2006. SMI 2006. IEEE
  International Conference on}, pages 13--13, June 2006.

\bibitem{Liang13}
J.~Liang and H.~Zhao.
\newblock Solving partial differential equations on point clouds.
\newblock {\em SIAM Journal of Scientific Computing}, 35:1461--1486, 2013.

\bibitem{Lindeberg}
T.~Lindeberg.
\newblock Scale selection properties of generalized scale-space interest point
  detectors.
\newblock {\em Journal of Mathematical Imaging and Vision}, 46(2):177--210,
  2013.

\bibitem{LuoSW09}
C.~Luo, J.~Sun, and Y.~Wang.
\newblock Integral estimation from point cloud in d-dimensional space: a
  geometric view.
\newblock In {\em Symposium on Computational Geometry}, pages 116--124, 2009.

\bibitem{OvsjanikovSG08}
M.~Ovsjanikov, J.~Sun, and L.~J. Guibas.
\newblock Global intrinsic symmetries of shapes.
\newblock {\em Comput. Graph. Forum}, 27(5):1341--1348, 2008.

\bibitem{reuter06dna}
M.~Reuter, F.-E. Wolter, and N.~Peinecke.
\newblock Laplace-beltrami spectra as "shape-dna" of surfaces and solids.
\newblock {\em Computer-Aided Design}, 38(4):342--366, 2006.

\bibitem{Saito200868}
N.~Saito.
\newblock Data analysis and representation on a general domain using
  eigenfunctions of laplacian.
\newblock {\em Applied and Computational Harmonic Analysis}, 25(1):68 -- 97,
  2008.

\bibitem{Schuster2003132}
P.~Schuster and R.~Jaffe.
\newblock Quantum mechanics on manifolds embedded in euclidean space.
\newblock {\em Annals of Physics}, 307(1):132 -- 143, 2003.

\bibitem{shewchuk96b}
J.~R. Shewchuk.
\newblock Triangle: {E}ngineering a {2D} {Q}uality {M}esh {G}enerator and
  {D}elaunay {T}riangulator.
\newblock In M.~C. Lin and D.~Manocha, editors, {\em Applied Computational
  Geometry: Towards Geometric Engineering}, volume 1148 of {\em Lecture Notes
  in Computer Science}, pages 203--222. Springer-Verlag, May 1996.
\newblock From the First ACM Workshop on Applied Computational Geometry.

\bibitem{Shewchuk02whatis}
J.~R. Shewchuk.
\newblock What is a good linear finite element? - interpolation, conditioning,
  anisotropy, and quality measures.
\newblock Technical report, In Proc. of the 11th International Meshing
  Roundtable, 2002.

\bibitem{SS14}
Z.~Shi and J.~Sun.
\newblock Convergence of the laplace-beltrami operator from point clouds.
\newblock {\em arXiv:1403.2141}.

\bibitem{Singer13}
A.~Singer and H.~tieng Wu.
\newblock Spectral convergence of the connection laplacian from random samples.
\newblock {\em arXiv:1306.1587}.

\bibitem{Strang73}
G.~Strang and G.~J. Fix.
\newblock {\em An analysis of the finite element method}.
\newblock Prentice-Hall, 1973.

\bibitem{cgal}
{The CGAL Project}.
\newblock {\em {CGAL} User and Reference Manual}.
\newblock {CGAL Editorial Board}, {4.4} edition, 2014.

\bibitem{Wardetzky06}
M.~Wardetzky.
\newblock {\em Discrete Differential Operators on Polyhedral Surfaces -
  Convergence and Approximation}.
\newblock PhD thesis, 2006.

\bibitem{Xu}
J.-J. Xu and H.-K. Zhao.
\newblock An eulerian formulation for solving partial differential equations
  along a moving interface.
\newblock {\em Journal of Scientific Computing}, 19(1-3):573--594, 2003.

\bibitem{yau}
S.-T. Yau.
\newblock The role of partial differential equations in differential geometry.
\newblock In {\em Proceedings of the International Congress of Mathematicians
  (Helsinki 1978)}, pages 237--250. Acad. Sci. Fennica, Helsinki, 1980.

\bibitem{Zhao:2001:FSR}
H.-K. Zhao, S.~Osher, and R.~Fedkiw.
\newblock Fast surface reconstruction using the level set method.
\newblock In {\em Proceedings of the IEEE Workshop on Variational and Level Set
  Methods (VLSM'01)}, VLSM '01, pages 194--, Washington, DC, USA, 2001. IEEE
  Computer Society.

\end{thebibliography}
\end{document}